\pdfoutput=1
\RequirePackage{ifpdf}
\ifpdf 
\documentclass[pdftex]{sigma}
\else
\documentclass{sigma}
\fi

\usepackage{bm}

\newcommand{\Res}{\mathop{\rm Res}}

\newcommand {\pp}{\bm{p}}
\newcommand{\zz}{\bm{z}}
\newcommand{\ci}{\mathcal{I}}
\newcommand{\cl}{\mathcal{L}}
\newcommand{\bn}{\mathbb{N}}
\newcommand{\bp}{\mathbb{P}}
\newcommand{\bq}{\mathbb{Q}}
\newcommand{\bz}{\mathbb{Z}}
\newcommand{\dd}{\mathrm{d}}
\newcommand {\h}{\hbar}
\newcommand{\modm}{\mathcal M}

\begin{document}
\allowdisplaybreaks

\newcommand{\arXivNumber}{1905.01890}

\renewcommand{\thefootnote}{}

\renewcommand{\PaperNumber}{061}

\FirstPageHeading

\ShortArticleName{Loop Equations for Gromov--Witten Invariant of $\mathbb{P}^1$}

\ArticleName{Loop Equations for Gromov--Witten Invariant of $\boldsymbol{\mathbb{P}^1}$\footnote{This paper is a~contribution to the Special Issue on Integrability, Geometry, Moduli in honor of Motohico Mulase for his 65th birthday. The full collection is available at \href{https://www.emis.de/journals/SIGMA/Mulase.html}{https://www.emis.de/journals/SIGMA/Mulase.html}}}

\Author{Ga\"etan BOROT~$^\dag$ and Paul NORBURY~$^\ddag$}

\AuthorNameForHeading{G.~Borot and P.~Norbury}

\Address{$^\dag$~Max Planck Institut f\"ur Mathematik, Vivatsgasse 7, 53111 Bonn, Germany}
\EmailD{\href{mailto:gborot@mpim-bonn.mpg.de}{gborot@mpim-bonn.mpg.de}}

\Address{$^\ddag$~School of Mathematics and Statistics, University of Melbourne, VIC 3010, Australia}
\EmailD{\href{mailto:norbury@unimelb.edu.au}{norbury@unimelb.edu.au}}

\ArticleDates{Received May 16, 2019, in final form August 14, 2019; Published online August 23, 2019}

\Abstract{We show that non-stationary Gromov--Witten invariants of $\bp^1$ can be extracted from open periods of the Eynard--Orantin topological recursion correlators $\omega_{g,n}$ whose Laurent series expansion at $\infty$ compute the stationary invariants. To do so, we overcome the technical difficulties to global loop equations for the spectral $x(z) = z + 1/z$ and $y(z) = \ln z$ from the local loop equations satisfied by the $\omega_{g,n}$, and check these global loop equations are equivalent to the Virasoro constraints that are known to govern the full Gromov--Witten theory of $\bp^1$.}

\Keywords{Virasoro constraints; topological recursion; Gromov--Witten theory; mirror symmetry}

\Classification{32G15; 14D23; 53D45}

\renewcommand{\thefootnote}{\arabic{footnote}}
\setcounter{footnote}{0}

\section{Introduction}\label{S1}
The Gromov--Witten invariants of $\bp^1$ were expressed as expectation values of Plancherel measure by Okounkov and Pandharipande in~\cite{OPaGro}. This viewpoint naturally led to the conjecture that the Gromov--Witten invariants of $\bp^1$ satisfy topological recursion applied to the complex curve defined by a dual Landau--Ginzburg model \cite{NScGro}, proven in \cite{DBOSS} using a completely different point of view. Topological recursion, defined in Section~\ref{TR}, produces a collection of meromorphic multidifferentials $\omega_{g,n}$ on a complex curve via a recursive relation between the expansion of $\omega_{g,n}$ at its poles and the expansion of $\omega_{g',n'}$ for $2g'-2+n'<2g-2+n$ at their poles. This relation at the poles takes the form of Virasoro constraints which we will call {\em local} Virasoro constraints. The Gromov--Witten invariants of $\bp^1$ already satisfy Virasoro constraints, conjectured in~\cite{EHXTop} and proven in~\cite{GivGro,OPaVir}, which we will call {\em global} Virasoro constraints. Until now the direct relation between these local and global Virasoro constraints has been missing. This paper fills this gap, showing that the global Virasoro constraints satisfied by Gromov--Witten invariants of $\bp^1$ are a~consequence of the local Virasoro constraints that constitute topological recursion.

The genus $g$ connected Gromov--Witten invariants of $\bp^1$ with $n$ insertions are defined as
\begin{gather*}
\bigg\langle \prod_{i=1}^n \tau_{b_i}^{\alpha_i}\bigg\rangle_g^d=\int_{[\overline{\modm}_{g,n}(\bp^1,d)]^{{\rm vir}}}\prod_{i=1}^n \psi_i^{b_i} {\rm ev}_i^\ast(\gamma_{\alpha_i}),
\end{gather*}
where $d$ is determined by $\sum\limits_{i=1}^n b_i=2g-2+2d$ and hence sometimes omitted from the notation. The cohomology classes $\gamma_i$ are chosen to be either the unit $\gamma_0=1 \in H^0\big(\bp^1\big)$ (non-stationary insertion) or the dual of the class of a point $\gamma_1=\omega \in H^2\big(\bp^1\big)$ (stationary insertion). The $0$-point invariants vanish except for $1=\int_{[\overline{\modm}_{0,0}(\bp^1,1)]^{{\rm vir}}} 1$ so we consider only $n>0$. The generating series of stationary invariants
\begin{gather}\label{genstat}
\Omega_{g,n}(x_1,\ldots,x_n)=\sum_{b_1,\ldots,b_n \geq 0}\bigg\langle \prod_{i=1}^n \tau_{b_i}^1 \bigg\rangle_g \prod_{i=1}^n \frac{(b_i+1)!}{x_i^{b_i + 2}}
\end{gather}
is analytic in a neighbourhood of $x_i=\infty$ and for $(g,n)\neq (0,1)$ it analytically continues to a~meromorphic function on $\mathcal{S}^n$, for $\mathcal{S}\cong\bp^1$, via the substitution $x_i=x(z_i)=z_i+1/z_i$.

In order to prove the Virasoro constraints satisfied by the Gromov--Witten invariants of $\bp^1$, we represent $\big\langle \prod\limits_{i=1}^n \tau_{b_i}^{\alpha_i}\big\rangle_g^d$ as periods, or contour integrals, of the rational functions defined in~\eqref{genstat}. The computation of these invariants as periods is expected from mirror symmetry. The stationary invariants were already known to be given by periods of rational functions, and what is new here is representing the non-stationary invariants as contour integrals of the same rational functions. By construction, $\Omega_{g,n}(x_1,\ldots,x_n)$ stores the stationary invariants via
\begin{gather*}
\bigg\langle \prod_{i=1}^n \tau_{b_i}^{1}\bigg\rangle_g= \bigotimes_{i = 1}^n \mathcal{I}_{b_i}^1 \big[\Omega_{g,n}\dd x_1 \cdots \dd x_n\big],
\end{gather*}
where the linear functional $\ci_b^1$ is a contour integral defined on a meromorphic $1$-form $f$ by
\begin{gather*}
\ci_{b}^{1}[f] := -\Res_{\infty} \frac{x^{b + 1}}{(b + 1)!}\,f.
\end{gather*}
We show how $\Omega_{g,n}(x_1,\ldots,x_n)$ also stores the non-stationary invariants via contour integrals. The contours involved are now non-compact, so we need to first develop the main technical tool introduced in this paper, which is a collection of regularised contour integrals. Theorem~\ref{insertions} exhibits the non-stationary invariants as regularised contour integrals of the analytic continuation of $\Omega_{g,n}$. It is rather interesting that the non-stationary invariants use the global structure of the analytic continuation.

To state the result, we distinguish $\mathcal{S} \cong \bp^1$ with coordinate $z$, and $\check{\mathcal{S}} \cong \bp^1$ with coordinate $x$. If $f$ is a $1$-form on $\mathcal{S}$ without poles at $\infty$, we introduce for $b \geq 0$
\begin{gather*}
\mathcal{R}_{b }[f] = f - \sum_{a = 0}^{b-1} \frac{(a + 1)!\,\dd x}{x^{a + 2}}\,\mathcal{I}_{a}^{1}[f].
\end{gather*}
We define an inverse function to $x$ by
\begin{align*}
Z\colon \ \check{\mathcal{S}} \setminus [-2,2] & \longrightarrow \mathcal{S}, \\ x & \longmapsto \dfrac{x - \sqrt{x^2 - 4}}{2},
\end{align*}
with the standard determination of the square root having a discontinuity on $\mathbb{R}_{-}$. We then define $\check{\mathcal{R}}_{b}[f](x) = \mathcal{R}_{b}[f](Z(x))$ which is a $1$-form on $\check{\mathcal{S}} \setminus [-2,2]$. Notice that $\check{\mathcal{R}}_{b}[f]$ behaves as $O\big(x^{-(b + 1)}\dd x\big)$ when $x \rightarrow 0$ and behaves as $O\big(x^{-b}\dd x\big)$ when $x \rightarrow \infty$. This remark shows that the following definition is well-posed
\begin{gather} \label{I^0}
\mathcal{I}_{b}^{0}[f] = \lim_{\epsilon \rightarrow 0^+} \left(2\mathcal{I}_{b-1}^{1}[f]\,\ln \epsilon - \int_{0}^{{\rm i}\infty} \frac{x^{b }}{b!}\,\check{\mathcal{R}}_{b}[f](x + {\rm i}\epsilon) - \int_{0}^{-{\rm i}\infty} \frac{x^{b }}{b!}\,\check{\mathcal{R}}_{b}[f](x - {\rm i}\epsilon)\right).
\end{gather}
When $b=0$ we set $\ci_{-1}^1=0$. In Section~\ref{propint} it is proven that $\ci^0_{b}[f] = -\int_{\gamma} \frac{x^b}{b!}\,f$ for a certain class of $1$-forms $f$ on $\mathcal{S}$, where $\gamma$ is a contour from $z = 0$ to $z = \infty$ (the two points above $x=\infty$). Rather generally, $\ci^0_b$ satisfies a formula of integration by parts. Hence we consider the linear functional $\ci^0_b$ to be a regularised integral. We actually extend the definition of $\ci^0_{b}$ to $1$-forms with poles or other singularities at $\infty$ such that the integration by parts is still satisfied.

For $2g-2+n>0$, define $\omega_{g,n}(z_1,\ldots,z_n)$ to be the analytic continuation of
\begin{gather*}
\Omega_{g,n}(x_1,\ldots,x_n)\dd x_1\cdots \dd x_n
\end{gather*}
to $\mathcal{S}^n$ where $x_i=z_i+1/z_i$. Equivalently $\omega_{g,n}(z_1,\ldots,z_n)$ is rational with expansion around \smash{$z_i=\infty$} given by $\Omega_{g,n}(x_1,\ldots,x_n)\dd x_1 \cdots \dd x_n$.
The fact that $\Omega_{g,n}(x_1,\ldots,x_n)$ analytically continues to a rational function is a consequence of topological recursion proven in~\cite{DBOSS}~-- see Remark~\ref{ratcont}~-- or topological recursion relations obtained by pulling back relations in $H^*\big(\overline{\modm}_{g,n}\big)$ to relations in $H^*\big(\overline{\modm}_{g,n}\big(\bp^1,d\big)\big)$ \cite{NorSta} which are satisfied quite generally by Gromov--Witten invariants. We expect that it can also be derived from the semi-infinite wedge formalism of Okounkov and Pandharipande \cite{OPaGro}.

\begin{theorem} \label{insertions}For $2g - 2 + n > 0$, $b_1,\ldots,b_n \geq 0$ and $\alpha_1,\ldots,\alpha_n \in \{0,1\}$
\begin{gather} \label{insert}
\bigg\langle \prod_{i = 1}^{n} \tau_{b_i}^{\alpha_i} \bigg\rangle_g = \bigg(\bigotimes_{i = 1}^n \mathcal{I}_{b_i}^{\alpha_i}\bigg)\omega_{g,n}.
\end{gather}
In the case $(g,n)=(0,2)$, replace $\omega_{0,2}$ in \eqref{insert} by
\begin{gather*}
\omega^{\rm odd}_{0,2}(z_1,z_2)=\frac12\frac{\dd z_1\,\dd z_2}{(z_1-z_2)^2}+\frac{1}{2}\frac{\dd z _1 \dd z_2}{(1-z_1z_2)^2},
\end{gather*}
and for $(g,n)=(0,1)$ we have
\begin{gather*}
\big\langle\tau_{b}^{\alpha} \big\rangle_0 = \mathcal{I}_{b+1}^{\alpha}\left[\frac{\dd z}{z}\right].
\end{gather*}
\end{theorem}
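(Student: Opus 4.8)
\section*{Proof proposal}

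The plan is to separate the all-stationary case from the non-stationary insertions and to reduce the latter to the former. First I would dispose of the base case $\alpha_1=\cdots=\alpha_n=1$, which is essentially a matter of unwinding definitions: by construction $\om_{g,n}$ expanded at $z_i=\infty$ equals $\Omega_{g,n}\,\dd x_1\cdots\dd x_n$, and since $\ci_b^1[f]=-\Res_\infty\frac{x^{b+1}}{(b+1)!}f$ extracts the coefficient of $\frac{(b+1)!}{x^{b+2}}\,\dd x$ in $f$, applying $\bigotimes_i\ci_{b_i}^1$ to \eqref{genstat} returns exactly $\langle\prod_i\tau_{b_i}^1\rangle_g$. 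In the same spirit I would treat the two exceptional cases directly: for $(g,n)=(0,1)$ a one-line residue computation gives $\ci_{b+1}^1[\dd z/z]=1/(d!)^2=\langle\tau_b^1\rangle_0$ with $d=(b+2)/2$, and for $(g,n)=(0,2)$ I would check that passing to the odd part $\om_{0,2}^{\rm odd}$ is precisely the correction that makes the residue reproduce the two-point stationary invariants.

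The heart of the matter is to convert one stationary functional $\ci_{b}^{1}$ into the regularised functional $\ci_{b}^{0}$ and show that this corresponds on the Gromov--Witten side to turning one $\tau_{b}^{1}$ into $\tau_{b}^{0}$. I would use the identity $\ci_b^0[f]=-\int_\gamma\frac{x^b}{b!}\,f$ from Section~\ref{propint}, so that the passage from stationary to non-stationary is geometrically the passage from a closed period around $z=\infty$ to the open period along the contour $\gamma$ from $z=0$ to $z=\infty$ -- exactly the distinction mirror symmetry predicts between the point class and the fundamental class. To match this with the enumerative side I would invoke the relations that interchange these two classes (the divisor, string and dilaton equations, together with the pullback topological recursion relations of \cite{NorSta}) to express a non-stationary invariant through invariants with one fewer non-stationary insertion, and then argue that integrating $\frac{x^b}{b!}\,\om_{g,n}$ along $\gamma$ and integrating by parts -- using the integration-by-parts property of $\ci_b^0$ recorded after \eqref{I^0} -- produces precisely those residue (i.e.\ $\ci^1$) terms. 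Both sides of \eqref{insert} then satisfy the same recursion in the number of non-stationary insertions with the same base case, which closes the induction; alternatively, one may verify that the periods satisfy the same Virasoro/loop equations as the Gromov--Witten invariants.

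The step I expect to be the main obstacle is the control of the non-compact regularised integral in this reduction. One must first establish the global analytic structure of $\om_{g,n}$ on $\mathcal{S}^n$ -- that its poles sit only at $z_i\in\{0,\infty\}$ and on the diagonals governed by $\om_{0,2}^{\rm odd}$ -- so that $\gamma$ can be deformed onto the imaginary axis of the $x$-plane appearing in \eqref{I^0} without crossing singularities. The genuinely delicate point is the logarithmic divergence of $\int_\gamma\frac{x^b}{b!}\,\om_{g,n}$ at the endpoints $z=0,\infty$: this is exactly the divergence removed by the subtraction $\check{\mathcal{R}}_b$ and the counterterm $2\ci_{b-1}^1[f]\ln\epsilon$ in the definition of $\ci_b^0$, and the entire scheme is engineered so that the finite part surviving $\epsilon\to0^+$ is the Gromov--Witten invariant. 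Most of the work therefore goes into showing that integration by parts under $\ci_b^0$ generates no boundary terms beyond those encoded by the $\ci^1$ functionals, and that the $\ln\epsilon$ counterterm reproduces the degree and divisor contributions on the enumerative side.
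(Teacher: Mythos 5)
Your proposal takes a genuinely different route from the paper, and it has a concrete gap at its core. The paper's proof of the stable cases is not an induction on the number of non-stationary insertions: it expands $\omega_{g,n}$ in the odd basis $\xi_k^\beta$, whose coefficients are the \emph{ancestor} invariants by Theorem~\ref{th:dboss}, proves that $\ci_a^\alpha\big[\xi_b^\beta\big]=(S_{a-b})_\alpha^\beta$ (Proposition~\ref{Sixi}, via the Kontsevich--Manin formula $(S_k)_\alpha^\beta=\langle\tau_0^{1-\beta}\tau_{k-1}^\alpha\rangle_0$ and a divisor/genus-zero TRR recursion), and then concludes by the descendant--ancestor relation \eqref{Srel}; the stable case is pure linear algebra with no contour deformation at all. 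Your scheme never brings in ancestors or the $S$-matrix, and that is exactly where it breaks: to run your induction you must remove a general insertion $\tau_b^0$ with $b\geq 2$ on the enumerative side, but the string, dilaton and divisor equations only dispose of $\tau_0^0$, $\tau_1^0$ and $\tau_0^1$. What removes a general $\tau_b^0$ is the full set of Virasoro constraints (the decay rules \eqref{D1}--\eqref{D5}), and in this paper those are \emph{derived from} Theorem~\ref{insertions} (equation~\eqref{evalog} in Section~\ref{S7} invokes it), so your "alternative" of verifying that the periods satisfy the same Virasoro equations is, inside this paper's logic, circular. You could instead cite the external Okounkov--Pandharipande constraints, but then you still owe the two arguments you only assert: (i) that the regularised periods $\bigotimes_i\ci^{\alpha_i}_{b_i}[\omega_{g,n}]$ obey the identical recursion --- this is precisely the hard content of Section~\ref{S7}, and it is not produced by integration by parts (Lemma~\ref{IPP} merely shifts indices, $\ci_b^0[\dd(f/\dd x)]=-\ci_{b-1}^0[f]$); it requires the Hilbert-transform asymptotics of Lemma~\ref{HilbertA} and Corollary~\ref{LEPTH}, the multiplication rule of Proposition~\ref{xaction}, the commutation statement of Lemma~\ref{Ltransform}, and the harmonic-number identity of Lemma~\ref{Lemlm}, which is where the coefficients $H_{j+k}-H_{j-1}$ in \eqref{D2} actually come from --- and (ii) a uniqueness statement that Virasoro plus the stationary sector determines all invariants, so that the induction closes.

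There are also smaller but genuine errors. Your base and exceptional cases only cover stationary insertions: the $(0,1)$ statement with $\alpha=0$, e.g.\ $\langle\tau_{2m+1}^0\rangle_0=-2H_{m+1}/(m+1)!^2$, is a regularised integral $\ci^0_{b+1}[\dd z/z]$ and emphatically not a one-line residue; its evaluation is the content of the first two columns of Proposition~\ref{PROPJIU}, resting on Corollary~\ref{LEPTH} and Lemma~\ref{Lemlm}. Similarly the mixed $(0,2)$ cases need Corollary~\ref{th:Sw2} together with the inductive argument of Section~\ref{S64}. Finally, your description of the analytic structure is wrong: for $2g-2+n>0$ the correlators $\omega_{g,n}$ have poles only at $z_i=\pm1$ (the zeros of $\dd x$), not at $z_i\in\{0,\infty\}$ or on diagonals; this pole structure is what makes the contour manipulations of Lemma~\ref{Vircomp} and the well-posedness of \eqref{I^0} work, and misplacing it suggests the deformation argument you envisage has not been thought through. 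In short, your high-level picture (open periods correspond to non-stationary insertions, with the $\ln\epsilon$ counterterm carrying the harmonic numbers) matches the paper's philosophy, but the proof as proposed cannot be completed without importing essentially all of the machinery --- DBOSS ancestor decomposition, $S$-matrix identification, and the regularised-integral calculus --- that constitutes the paper's actual argument.
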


\begin{remark}Integrals of differentials over compact and non-compact contours appearing in work of Dubrovin~\cite{DubGeo} were used in~\cite{DNOPSPri} to produce correlators of cohomological field theories such as Gromov--Witten invariants. That paper considered only {\em primary} invariants, corresponding to~$\ci^\alpha_0$, where the contour integrals do not require regularisation. Our technical contribution is a rigorous definition of the integrals over non-compact contours.
\end{remark}

Define the partition function which stores Gromov--Witten invariants by
\begin{gather*}
Z_{\bp^1}\big(\hbar,\big\{t^{\alpha}_k\big\}\big)=\exp\Bigg(\sum_{\substack{g,n,b_1,\ldots,b_n \geq 0 \\ \alpha_1,\ldots,\alpha_n \in \{0,1\}}} \frac{\hbar^{g-1}}{n!}\bigg\langle \prod_{i=1}^n\tau_{b_i}^{\alpha_i} \bigg\rangle_g \prod_{i = 1}^n t^{\alpha_i}_{b_i}\Bigg).
\end{gather*}
Define the Virasoro operators $L_n$ for $n\geq -1$ by
\begin{gather}
 L_n = - (n+1)! \frac{\partial}{\partial t^0_{n+1}} + \sum_{j\geq 1} \frac{(n+j)!}{(j-1)!}
t^0_j \frac{\partial}{\partial t^0_{n+j}} + 2\sum_{j\geq 1} \frac{(n+j)!}{(j-1)!} (H_{n+j} -H_{j-1})
t^0_{j} \frac{\partial}{\partial t^1_{n+j-1}} \nonumber\\
\hphantom{L_n =}{} + \sum_{j\geq 0} \frac{(n+j+1)!}{j!}t^1_j \frac{\partial}{\partial t^1_{n+j}} - 2(n+1)!H_{n+1}
\frac{\partial}{\partial t^1_n} \nonumber\\
\hphantom{L_n =}{} + \hbar\sum_{j = 0}^{n-2}
(j+1)! (n-j-1)! \frac{\partial}{\partial t^1_j}
\frac{\partial}{\partial t^1_{n-j-2}} + \hbar^{-1}\delta_{n,0} (t_0^0)^2+ \hbar^{-1}\delta_{n,-1} t_0^0t_0^1,\label{viror}
\end{gather}
where $H_k = \sum\limits_{j =1}^{k} \frac{1}{j}$. We give a new proof of the following theorem of~\cite{OPaVir}.
\begin{theorem}\label{th:vir}The partition function $Z_{\bp^1}\big(\big\{t^{\alpha}_k\big\}\big)$ satisfies the Virasoro constraints
\begin{gather*}
\forall\, n \geq -1,\qquad L_n\cdot Z_{\bp^1}\big(\big\{t^{\alpha}_k\big\}\big)=0.
\end{gather*}
\end{theorem}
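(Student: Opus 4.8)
The plan is to derive the global Virasoro constraints $L_n \cdot Z_{\bp^1} = 0$ from the \emph{local} loop equations that topological recursion imposes on the correlators $\omega_{g,n}$, using the period representation of Theorem~\ref{insertions} as the dictionary between the two. Writing $F = \log Z_{\bp^1}$ and dividing the desired constraint by $Z_{\bp^1}$, the equation $L_n \cdot Z_{\bp^1}=0$ becomes a relation that is at most quadratic in the derivatives of $F$; since each derivative $\partial/\partial t^{\alpha}_{k}$ corresponds to an insertion $\tau^{\alpha}_{k}$ and hence, by Theorem~\ref{insertions}, to an application of the functional $\ci^{\alpha}_{k}$ to some $\omega_{g,n}$, the whole constraint translates into a single identity among periods and regularised periods of the $\omega_{g,n}$. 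The first step is therefore to rewrite $L_n\cdot Z_{\bp^1}=0$ in this form and to identify the target identity as the period, under $\bigotimes_i \ci^{\alpha_i}_{b_i}$, of an explicit meromorphic form built from $\omega_{g,n}$, from $\omega_{g-1,n+2}$, and from products $\omega_{g_1,n_1}\,\omega_{g_2,n_2}$.

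The heart of the argument is to \emph{globalise} the loop equations. Topological recursion (Section~\ref{TR}) guarantees only the local linear and quadratic loop equations: near each ramification point of $x(z)=z+1/z$, namely $z=\pm1$, the combinations $\omega_{g,n+1}(z,\cdot)+\omega_{g,n+1}(\sigma z,\cdot)$ and $\omega_{g-1,n+2}(z,\sigma z,\cdot)+\sum \omega_{g_1}\,\omega_{g_2}$ are holomorphic, where $\sigma$ is the local involution exchanging the two sheets. The crucial observation is that here $\sigma(z)=1/z$ is a \emph{global} involution of $\mathcal{S}\cong\bp^1$, so the two local statements, one at $z=1$ and one at $z=-1$, can be combined into identities of meromorphic forms valid on all of $\mathcal{S}^{n}$. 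I would establish these global loop equations by controlling the pole structure of the relevant symmetric forms: their only possible poles are at the ramification points, at the diagonals, and above $x=\infty$ (the points $z=0,\infty$), and the local loop equations together with the known behaviour of $\omega_{g,n}$ fix the principal parts, so a residue/Liouville argument forces the global identity.

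The final step is to feed the global loop equations into the functionals $\ci^{\alpha}_{b}$ and to check that the coefficients produced match those of $L_n$ in~\eqref{viror}. Evaluating the residues at $\infty$ for the stationary functionals $\ci^1_b$ and the regularised integrals~\eqref{I^0} for the non-stationary functionals $\ci^0_b$, the change of variable $x=z+1/z$ and the logarithmic primitive $y=\ln z$ should reproduce the factorials $(n+j)!/(j-1)!$ and the harmonic-number coefficients $H_{n+j}-H_{j-1}$ and $H_{n+1}$; the quadratic loop equation supplies the $\hbar\,\partial_{t^1}\partial_{t^1}$ term, while the unstable contributions $(g,n)=(0,1),(0,2)$, treated separately in Theorem~\ref{insertions}, supply the anomaly terms $\hbar^{-1}\delta_{n,0}(t^0_0)^2$ and $\hbar^{-1}\delta_{n,-1}t^0_0t^1_0$. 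The main obstacle will be this matching in the non-stationary sector: because the contours for $\ci^0_b$ are non-compact and run between the two points above $x=\infty$, the logarithmic behaviour of $y$ forces the regularisation~\eqref{I^0}, and extracting the harmonic numbers requires controlling the $\ln\epsilon$ divergences and performing the contour deformations compatibly with the integration-by-parts property of $\ci^0_b$. Establishing that these regularised periods commute with the globalisation, so that the period of a global loop equation equals the expected combination of Gromov--Witten invariants with the precise Virasoro coefficients, is the technical crux.
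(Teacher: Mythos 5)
Your proposal follows essentially the same route as the paper's proof: your ``globalisation'' of the local loop equations is precisely Lemma~\ref{Vircomp} (obtained by the residue/contour-moving argument applied to the holomorphicity of $q_{g,n}$ at $z=\pm1$), and your final step of feeding the global identity into the functionals $\ci^{\alpha}_{b}$ and matching the coefficients of $L_n$ is the content of Section~\ref{S7}, with the harmonic numbers extracted via the Hilbert-transform asymptotics of Corollary~\ref{LEPTH}, the coefficient identities of Proposition~\ref{xaction}, and the commutation of the functionals secured by Lemma~\ref{Ltransform}. You also correctly located the technical crux (the regularised non-compact periods $\ci^0_b$ and the $\ln\epsilon$ cancellations) and the origin of the anomaly and $\hbar\,\partial_{t^1}\partial_{t^1}$ terms, so the plan coincides in structure with the paper's argument.
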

We derive the global constraints of Theorem~\ref{th:vir} from the local constraints of topological recursion, by moving the contours from poles $z = \pm 1$ to $z = \infty$. For spectral curves that are smooth algebraic plane curves, there would be no difficulty in doing so, cf., e.g.,~\cite{OrantinVir}. The difficulty which we overcome is the handling of the log singularities in the Landau--Ginzburg model dual to~$\mathbb{P}^1$, via an asymptotic expansion result for the Hilbert transform found in~\cite{WongMcClure}. The definition of the regularised integral~\eqref{I^0} appears as a byproduct of our analysis. We expect that our method generalises to more complicated spectral curves with logarithmic singularities, in particular to certain types of Hurwitz numbers and Gromov--Witten invariants for some other targets.

The Virasoro constraints for Gromov--Witten invariants have been proven in some generality by Givental and Teleman \cite{GivGro,Tele}. The Virasoro operators can be obtained from conjugation of local Virasoro operators by operators that reconstruct the partition function of the Gromov--Witten invariants from the partition function of Gromov--Witten invariants of a point. The work of \cite{DBOSS} showed that topological recursion is equivalent to this reconstruction of Givental and Teleman. Hence one would expect that the global Virasoro constraints of Theorem~\ref{th:vir} can be derived directly from the local Virasoro constraints. Our result essentially exhibits this conjugation via moving contours.

The paper is organised as follows. In Section~\ref{sec:GW} we recall the definition of descendant and ancestor Gromov--Witten invariants which are needed in the proof of Theorem~\ref{insertions}. In Section~\ref{TR} we review the topological recursion and its application to Gromov--Witten invariants of $\bp^1$ proven in \cite{DBOSS}, and derive a preliminary form of the global constraints from the local ones. Section~\ref{regint} develops the regularised integral and its properties which is the main technical tool of this paper. Section~\ref{proofthm1} contains the proof of Theorem~\ref{insertions}. In Section~\ref{S7}, we use the properties of the regularised integral and the aforementioned preliminary global constraints to produce a new proof of Theorem~\ref{th:vir}.

\section{Gromov--Witten invariants} \label{sec:GW}

\subsection{The moduli space of stable maps}
Let $X$ be a projective algebraic variety and consider $(C,x_1,\ldots,x_n)$ a connected smooth curve of genus $g$ with $n$ distinct marked points. For $d \in H_2(X,\bz)$ the moduli space of maps $\modm_{g,n}(X,d)$ consists of morphisms
\begin{gather*}
f\colon \ (C,x_1,\ldots,x_n)\rightarrow X
\end{gather*}
satisfying $f_\ast [C]= d$ quotiented by isomorphisms of the domain $C$ that fix each $x_i$. The moduli space has a compactification $\overline{\modm}_{g,n}(X,d)$ given by the moduli space of stable maps: the domain~$C$ is a connected nodal curve; the distinct points $\{x_1,\ldots,x_n\}$ avoid the nodes; any genus zero irreducible component of $C$ with fewer than three distinguished points (nodal or marked) must be collapsed to a point; any genus one irreducible component of~$C$ with no marked point must be collapsed to a point. The moduli space of stable maps has irreducible components of different dimensions but it has a \textit{virtual fundamental class}, $\big[\overline{\modm}_{g,n}(X,d)\big]^{{\rm vir}}$, the existence and construction of which is highly nontrivial~\cite{BehrendFantechi}, of dimension
\begin{gather}
\label{dimeq} \dim\big[\overline{\modm}_{g,n}(X,d)\big]^{\text{vir}}=\langle c_1(X),d\rangle +(\dim X-3)(1-g)+n.
\end{gather}

\subsubsection[Cohomology on $\overline{\modm}_{g,n}(X,d)$]{Cohomology on $\boldsymbol{\overline{\modm}_{g,n}(X,d)}$}
Let $\mathcal{L}_i$ be the line bundle over $\overline{\modm}_{g,n}(X,d)$ with fibre at each point the cotangent bundle over the $i$th marked point of the domain curve $C$. Define $\psi_i=c_1(\mathcal{L}_i)\in H^2\big(\overline{\modm}_{g,n}(X,d),\bq\big)$ to be the first Chern class of~$\mathcal{L}_i$. For $i \in \{1,\ldots,n\}$ there exist evaluation maps
\begin{gather*}
{\rm ev}_i\colon \ \overline{\modm}_{g,n}(X,d)\longrightarrow X, \qquad {\rm ev}_i(f)=f(x_i),
\end{gather*}
so that classes $\gamma\in H^*(X,\bz)$ pull back to classes in $H^*\big(\overline{\modm}_{g,n}(X,d),\bq\big)$
\begin{gather*}
{\rm ev}_i^\ast\colon \ H^*(X,\bz)\longrightarrow H^*(\overline{\modm}_{g,n}(X,d),\bq).
\end{gather*}
The forgetful map $\pi\colon \overline{\modm}_{g,n}(X,d)\to\overline{\modm}_{g,n}$ sends the map to its domain curve with possible contractions of unstable components.

The Gromov--Witten invariants are defined by integrating cohomology classes, often called descendant classes, of the form
\begin{gather*}
\tau_{b_i}(\gamma)=\psi_i^{b_i}{\rm ev}^\ast_i(\gamma)
\end{gather*}
against the virtual fundamental class. The descendant Gromov--Witten invariants are defined by
\begin{gather*}
\bigg\langle \prod_{i=1}^n\tau_{b_i}(\gamma_{\alpha_i}) \bigg\rangle_g^{d}:=\int_{[\overline{\modm}_{g,n}(X,d)]^{{\rm vir}}} \prod_{i=1}^n\psi_i^{b_i} {\rm ev}_i^\ast(\gamma_{\alpha_i}).
\end{gather*}
When $X=\{pt\}$, $\overline{\modm}_{g,n}(X,d)=\overline{\modm}_{g,n}$ is the moduli space of genus $g$ stable curves with $n$ labeled points, equipped with line bundles $\mathcal{L}_i$ with fibre at each point the cotangent bundle over the $i$th marked point of the domain curve $C$ and $\psi_i=c_1(\mathcal{L}_i)\in H^2\big(\overline{\modm}_{g,n},\bq\big)$. For $2g-2+n>0$, let $\overline{\psi}_i=\pi^*\psi_i\in H^2\big(\overline{\modm}_{g,n}(X,d),\bq\big)$. The ancestor Gromov--Witten invariants use the classes $\overline{\psi}_i$ in place of~$\psi_i$:
\begin{gather*}
\bigg\langle \prod_{i=1}^n\overline{\tau}_{b_i}(\gamma_{\alpha_i}) \bigg\rangle _{g}^{d}:=\int_{[\overline{\modm}_{g,n}(X,d)]^{{\rm vir}}} \prod_{i=1}^n\overline{\psi}_i^{b_i} {\rm ev}_i^\ast(\gamma_{\alpha_i}).
\end{gather*}
They are defined only in the stable case, i.e., when $2g-2+n>0$.

\subsection[Specialising to $\bp^1$]{Specialising to $\boldsymbol{\bp^1}$}
We now only consider the target $X = \bp^1$. Let $\gamma_0 \in H^0\big(\bp^1,\mathbb{Z}\big)$ be the unit
and $\gamma_1 \in H^2\big(\bp^1,\mathbb{Z}\big)$ be the Poincar\'e dual class of a point. For brevity we denote $\tau_{b}^{\alpha} := \tau_{b}(\gamma_{\alpha})$. The degree $d \in \mathbb{N}$ of the Gromov--Witten invariants $\big\langle \prod\limits_{i = 1}^n \tau_{b_i}^{\alpha_i} \big\rangle_g$ is determined by $\sum\limits_{i=1}^n (b_i+\alpha_i)=2g-2+2d+n$ coming from~\eqref{dimeq}. Insertions of $\gamma_1$ are called stationary Gromov--Witten invariants of~$\bp^1$ since the images of the marked points are fixed, and insertions of $\gamma_0$ are called non-stationary. The ancestor invariants of $\bp^1$ use the analogous notation $\big\langle \prod\limits_{i=1}^n\overline{\tau}_{b_i}^{\alpha_i}\big\rangle_g$. We introduce the descendant partition function in the variables $t^{\alpha}_k$ for $\alpha\in\{0,1\}$ and $k\in\bn$ by
\begin{gather} \label{part}
Z_{\bp^1}\big(\hbar,\big\{t^{\alpha}_k\big\}\big)=\exp\Bigg(\sum_{\substack{g,n,b_1,\ldots,b_n \geq 0 \\ \alpha_1,\ldots,\alpha_n \in \{0,1\} \\ 2g - 2 + n > 0}} \frac{\hbar^{g-1}}{n!}\bigg\langle \prod_{i=1}^n\tau_{b_i}^{\alpha_i} \bigg\rangle_g \prod_{i = 1}^n t^{\alpha_i}_{b_i}\Bigg),
\end{gather}
and the ancestor partition function using the variables $\overline{t}^{\alpha}_k$:
\begin{gather} \label{partanc}
\overline{Z}_{\bp^1}\big(\hbar,\big\{\overline{t}^{\alpha}_k\big\}\big)=\exp\Bigg(\sum_{\substack{g,n,b_1,\ldots,b_n \geq 0 \\ \alpha_1,\ldots,\alpha_n \in \{0,1\}}} \frac{\hbar^{g-1}}{n!}\bigg\langle \prod_{i=1}^n\overline{\tau}_{b_i}^{\alpha_i} \bigg\rangle_g\prod_{i = 1}^n \overline{t}^{\alpha_i}_{b_i}\Bigg).
\end{gather}
The descendant invariants uniquely determine the ancestor invariants. They are related by an endomorphism valued series $S(u)=\sum\limits_{k \geq 0} S_k\,u^k$ known as the $S$-matrix, by the linear change
\begin{gather*}
\overline{t}_{m}^{\alpha} = \sum_{\substack{k \geq m \\ \beta \in \{0,1\}}} (S_{k - m})_{\beta}^{\alpha}t_{k}^{\beta}.
\end{gather*}
Equivalently
\begin{gather} \label{Srel}
\tau_k^{\beta} = \sum_{\substack{m \leq k \\ \alpha \in \{0,1\}}} (S_{k- m})_{\beta}^{\alpha} \overline{\tau}_{m}^{\alpha},
\end{gather}
when evaluated between $\langle\cdot\rangle$, so for example the 1-point genus $g$ invariants satisfy
\begin{gather*}
\big\langle \tau_{k}^{\beta} \big\rangle_{g} = \sum_{\substack{m \leq k \\ \alpha \in \{0,1\}}} (S_{k - m})_{\beta}^{\alpha} \big\langle \overline{\tau}_{m}^{\alpha}\big\rangle_{g}.
\end{gather*}
It is proven in \cite{KMaRel} that
\begin{gather*}
Z_{X}^{{\rm st}}\big(\hbar,\big\{t_{k}^{\beta}\big\}\big) =
\overline{Z}_{X}\big(\hbar,\big\{\overline{t}_{m}^{\alpha}\big\}\big)\big|_{\overline{t}_{m}^{\alpha}
= \sum\limits_{k,\beta} (S_{k - m})_{\beta}^{\alpha} t_{k}^{\beta}},
\end{gather*}
where $Z^{{\rm st}}_X\big(\h,\big\{t^{\beta}_k\big\}\big)$ is the stable part of the descendant invariants, i.e., it excludes the terms $(g,n) = (0,1)$ and $(0,2)$ in~\eqref{part}.

\section{Topological recursion} \label{TR}

\subsection{Definition}

Topological recursion \cite{EORev} takes as input a spectral curve $\mathcal{C} = (\mathcal{S},x, y,B)$ consisting of a Riemann surface $\mathcal{S}$, two meromorphic functions $x$ and $y$ on $\mathcal{S}$ and a symmetric bidifferential $B$ on $\mathcal{S}^2$. We assume that each zero of $\dd x$ is simple and does not coincide with a zero of $\dd y$. The output of topological recursion is a collection of symmetric multidifferentials $\omega_{g,n}\in H^0\big(K_{\mathcal{S}}(*D)^{\boxtimes n},\mathcal{S}^n\big)^{\mathfrak{S}_{n}}$ for $g \geq 0$ and $n \geq 1$ such that $2g - 2 + n > 0$, which we call \emph{correlators}. Here $D$ is the divisor of zeroes of $\dd x=0$. In other words the multidifferentials are holomorphic outside of $\dd x=0$ and can have poles of arbitrary order when each variable approaches $D$.

The correlators are defined as follows. We first define the exceptional cases
\begin{gather*}
\omega_{0,1}(p_1) = y(p_1) \, \dd x(p_1) \qquad \text{and} \qquad \omega_{0,2}(p_1, p_2) = B(p_1,p_2).
\end{gather*}
The correlators $\omega_{g,n}$ for $2g-2+n>0$ are defined recursively via the following equation
\begin{gather*}
 \omega_{g,n}(p_1, \pp_I) = \sum_{\dd x(\alpha) = 0} \mathop{\text{Res}}_{p=\alpha} K(p_1, p) \Bigg[ \omega_{g-1,n+1}(p, \sigma_{\alpha}(p), \pp_I) \\
 \hphantom{\omega_{g,n}(p_1, \pp_I) =}{} + \sum_{\substack{h+h'=g \\ J \sqcup J' = I}}^{\circ} \omega_{h,1 + |J|}(p, \pp_J) \, \omega_{h',1+|J'|}( \sigma_{\alpha}(p), \pp_{J'}) \Bigg].
\end{gather*}
Here, we use the notation $I = \{2, 3, \ldots, n\}$ and $\pp_J = \{p_{j_1}, p_{j_2}, \ldots, p_{j_k}\}$ for $J = \{j_1, j_2, \ldots, j_k\}\allowbreak \subseteq I$. The holomorphic function $p \mapsto \sigma_{\alpha}(p)$ is the non-trivial involution defined locally at the ramification point $\alpha$ and satisfying $x( \sigma_{\alpha}(p)) = x(p)$. The symbol $\circ$ over the inner summation means that we exclude any term that involves $\omega_{0,1}$. Finally, the recursion kernel is given by
\begin{gather*}
K(p_1,p) = \frac{1}{2}\frac{\int_{\sigma_{\alpha}(p)}^{p} \omega_{0,2}(p_1,\cdot)}{[y(p) - y( \sigma_{\alpha}(p))]\dd x(p)}.
\end{gather*}
The recursion only depends on the local behaviour of $y$ near the zeros of $\dd x$ up to functions that are even with respect to the involution. Hence it only depends on $\dd y$. Below we write a spectral curve~\eqref{P1spec} in terms of~$\dd y$.

For $2g-2+n>0$, the multidifferentials $\omega_{g,n}(p_1,\ldots,p_n)$ are meromorphic on $\mathcal{S}^n$ with poles at $p_i \in D$. They can be expressed as polynomials in a basis of differentials with poles only at~$D$ and divergent part odd under each local involution~$\sigma_{\alpha}$. We denote $\xi^\alpha_{k}$ such a basis indexed by $k \geq 0$ and zeroes $\alpha$ of~$\dd x$. Once a choice of basis is made, we define the partition function of the spectral curve $\mathcal{C}=(\mathcal{S},x,y,B)$ by
\begin{gather*}
\label{partpt}Z^{\mathcal{C}}\big(\hbar,\big\{t^{\alpha}_k\big\}\big)=\exp\Bigg(\sum_{\substack{g \geq 0,\, n \geq 1 \\ 2g - 2 + n > 0}} \frac{\hbar^{g-1}}{n!}\omega_{g,n}|_{\xi^\alpha_k=t^\alpha_k}\Bigg).
\end{gather*}

\subsection[Relation to Gromov--Witten theory of $\bp^1$]{Relation to Gromov--Witten theory of $\boldsymbol{\bp^1}$}

Dunin-Barkowski, Orantin, Shadrin and Spitz \cite{DBOSS} proved that for a particular choice of basis~$\big\{\xi_k^{\alpha}\big\}$, the partition function $Z^{\mathcal{C}}\big(\hbar,\big\{t^{\alpha}_k\big\}\big)$ coincides with the partition function of a cohomological field theory. In particular, they showed how to realise in this way the cohomological field theory encoding Gromov--Witten invariants of~$\bp^1$, which corresponds to the spectral curve
\begin{gather} \label{P1spec}
\mathcal{C}_{\bp^1} =\left(\bp^1,\,x=z+\frac{1}{z},\,\dd y=\frac{\dd z}{z},\,B=\frac{\dd z_1\,\dd z_2}{(z_1-z_2)^2}\right).
\end{gather}
For $2g-2+n>0$, the associated correlators $\omega_{g,n}$ have poles at $D = \{-1,1\}$ and the global involution $z\mapsto 1/z$ realises the local involutions $\sigma_{\pm 1}$. The aforementioned basis of $1$-forms is defined by induction for $k \geq 0$ and $\alpha \in \{0,1\}$
\begin{gather} \label{auxdif}
\xi^\alpha_{k}(z)=-\dd\left(\frac{\xi^\alpha_{k-1}(z)}{\dd x(z)}\right),
\end{gather}
from the initial data $\xi^\alpha_{-1}(z)= z^{-\alpha}\,\dd z$ which are not part of the basis.

\begin{remark} \label{oddre} The $\xi^\alpha_{k}(z)$ for $k \geq 0$ are odd under the involution because they are unchanged if we replace $\xi^\alpha_{-1}(z)$ with its odd part
\begin{gather}
\label{xiodd}\xi^{\alpha,{\rm odd}}_{-1}(z)=\left(\frac{x}{2}\right)^{1-\alpha}\frac{\dd z}{z}.
\end{gather}
\end{remark}

\begin{theorem}[\cite{NScGro} for $g \in \{0,1\}$, \cite{DBOSS} in general]\label{th1p}
For $2g - 2 + n > 0$, $\Omega_{g,n}(x(z_1),\ldots,x(z_n))\dd x(z_1)\otimes \cdots \otimes\dd x(z_n)$ initially defined as a formal series near $z_i = \infty$ analytically continues to a symmetric multidifferential on $\mathcal{S}^n$, which coincides with the correlators of the topological recursion for the spectral curve \eqref{P1spec}. In particular,
\begin{gather*}
\bigg\langle \prod_{i = 1}^n \tau_{b_i}^1 \bigg\rangle_g = \mathcal{I}_{b_1}^{1} \otimes \cdots \otimes\mathcal{I}_{b_n}^{1}[\omega_{g,n}].
\end{gather*}
\end{theorem}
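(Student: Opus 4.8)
The substantive content is that the multidifferential built from the stationary descendent potential, namely $\Omega_{g,n}\,\dd x_1\cdots\dd x_n$ regarded as a formal series near $z_i=\infty$, analytically continues to the topological recursion correlators of $\mathcal{C}_{\bp^1}$; the residue formula for $\langle\prod\tau_{b_i}^1\rangle_g$ is then a formal consequence of this identification. The plan is to match the topological recursion output with the Givental--Teleman reconstruction of the semisimple cohomological field theory governing Gromov--Witten invariants of $\bp^1$, following Dunin-Barkowski--Orantin--Shadrin--Spitz. Their result expresses each $\omega_{g,n}$ as a sum over stable graphs of genus $g$ with $n$ legs, the vertices carrying $\psi$-class intersection numbers on $\overline{\modm}_{h,m}$, the internal edges carrying an $R$-matrix read off from the local geometry of the spectral curve at its ramification points, and the legs decorated by the chosen basis $\{\xi^\alpha_k\}$. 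It then remains to check that this local data reproduces the Givental data of the $\bp^1$ theory.

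\textbf{Local analysis of $\mathcal{C}_{\bp^1}$.} First I would locate the ramification points of $x=z+1/z$: since $\dd x=\bigl(1-z^{-2}\bigr)\dd z$, they sit at $z=\pm1$, are simple, and are exchanged by the global involution $z\mapsto 1/z$ realizing $\sigma_{\pm1}$; the critical values are $x=\pm2$. These two points are exactly the critical points of the Landau--Ginzburg superpotential $W=x$, hence correspond to the two idempotents of the semisimple quantum cohomology of $\bp^1$, whose canonical coordinates at the relevant specialization are precisely $\pm2$. At each ramification point I would introduce the even local coordinate $\zeta$ with $x=x(\alpha)+\zeta^2$ and expand $\dd y=\dd z/z$ in $\zeta$; the odd part of this expansion furnishes the disk data and, after the standard normalization, the $R$-matrix and translation entering the quantization. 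I would simultaneously identify the basis $\xi^\alpha_k$ generated by \eqref{auxdif}, together with the odd representatives \eqref{xiodd}, as the descendent insertions $\psi^k$ transported to the spectral curve, so that the leg decorations in the graph sum match the insertions $\tau^\alpha_b$.

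\textbf{Matching the Frobenius structure.} The quantum cohomology of $\bp^1$ is semisimple and its $R$-matrix, metric, and translation are explicitly known, the underlying quantum differential equation being of Bessel type, as used in the Givental and Okounkov--Pandharipande treatments of the Virasoro constraints. I would verify that the series extracted from the $\zeta$-expansion of $\dd y=\dd z/z$ at $z=\pm1$ reproduce precisely this $R$-matrix and associated translation, and that the pairing induced by $B$ matches the relevant metric. Granting this identification, the graph sum for $\omega_{g,n}$ coincides term by term with the Givental--Teleman formula for the Gromov--Witten cohomological field theory of $\bp^1$; consequently the Laurent expansion of $\omega_{g,n}$ at $z_i=\infty$ is the generating series of stationary descendent invariants, which is the asserted analytic continuation of $\Omega_{g,n}\,\dd x_1\cdots\dd x_n$.

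\textbf{Residue extraction and the main obstacle.} Once the analytic continuation is in hand the displayed formula is immediate: by \eqref{genstat} the coefficient of $x_i^{-(b_i+2)}$ in $\Omega_{g,n}$ is $\langle\prod\tau_{b_i}^1\rangle_g$ up to the factor $(b_i+1)!$, while $\mathcal{I}_b^1[f]=-\Res_{\infty}\frac{x^{b+1}}{(b+1)!}f$ extracts exactly this coefficient from a $1$-form expanded at $x=\infty$; applying $\mathcal{I}_{b_1}^1\otimes\cdots\otimes\mathcal{I}_{b_n}^1$ to $\omega_{g,n}$ therefore returns $\langle\prod\tau_{b_i}^1\rangle_g$. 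I expect the main obstacle to be the bookkeeping in the two matching steps: correctly reading the $R$-matrix out of the \emph{logarithmic} differential $\dd y=\dd z/z$, so that the $\ln$ singularities at $z=0,\infty$ play no role in the local analysis at $z=\pm1$ yet control the behaviour at infinity where the expansion is taken, and pinning down normalizations so that the basis required by Dunin-Barkowski--Orantin--Shadrin--Spitz agrees with $\{\xi^\alpha_k\}$. For $g\in\{0,1\}$ one can bypass the Givental machinery and verify the identity against the explicit low-genus invariants of $\bp^1$ as in \cite{NScGro}; a uniform all-genus alternative would be to match the topological recursion $n$-point functions with the Okounkov--Pandharipande operator formula in the infinite-wedge formalism.
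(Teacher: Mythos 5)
Your overall route is the same as the one the paper relies on: Theorem~\ref{th1p} is cited from \cite{DBOSS}, whose proof is exactly the Givental--Teleman/graph-sum matching you outline (this is Theorem~\ref{th:dboss} in the paper), and your local analysis of the spectral curve at $z=\pm 1$ is the right way to carry that matching out. The final residue-extraction paragraph is also fine \emph{once} the analytic-continuation statement is granted.

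The genuine gap is in the sentence ``consequently the Laurent expansion of $\omega_{g,n}$ at $z_i=\infty$ is the generating series of stationary descendent invariants,'' and in the earlier identification of the leg decorations $\xi^{\alpha}_k$ with ``the descendent insertions.'' What the Givental--Teleman identification gives you is the \emph{ancestor} partition function: by Theorem~\ref{th:dboss}, the coefficient of $\bigotimes_i \xi^{\beta_i}_{k_i}$ in $\omega_{g,n}$ is the ancestor invariant $\big\langle \prod_i \overline{\tau}^{\beta_i}_{k_i}\big\rangle_g$, not the descendant $\big\langle \prod_i \tau^{\beta_i}_{k_i}\big\rangle_g$, and for $\mathbb{P}^1$ these differ (ancestors and descendants are related by the nontrivial Kontsevich--Manin $S$-matrix, equation~\eqref{Srel}). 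The generating series $\Omega_{g,n}$ in~\eqref{genstat} stores \emph{descendants}, so the step from the graph-sum identification to the claimed analytic continuation requires an additional nontrivial input: one must show that expanding the basis elements at $x=\infty$ performs precisely the ancestor-to-descendant conversion, i.e., that
\begin{gather*}
\mathcal{I}^{\alpha}_{a}\big[\xi^{\beta}_{k}\big] = (S_{a-k})_{\alpha}^{\beta},
\end{gather*}
which is Proposition~\ref{Sixi} of the paper (proved there via the genus-zero topological recursion relations and the divisor equation, or in \cite{DBOSS} by direct computation); Remark~\ref{ratcont} makes explicit that the proof of Theorem~\ref{th1p} consists of Theorem~\ref{th:dboss} \emph{together with} this $S$-matrix encoding. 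With that lemma in hand, the chain
\begin{gather*}
\bigotimes_{i=1}^n \mathcal{I}^{1}_{b_i}[\omega_{g,n}]
= \sum_{k_i,\beta_i} \bigg\langle \prod_{i=1}^n \overline{\tau}^{\beta_i}_{k_i}\bigg\rangle_g \prod_{i=1}^n (S_{b_i-k_i})_{1}^{\beta_i}
= \bigg\langle \prod_{i=1}^n \tau^{1}_{b_i}\bigg\rangle_g
\end{gather*}
closes the argument; without it, your proof establishes only that $\omega_{g,n}$ encodes ancestor invariants in the $\xi$-basis, which is strictly weaker than the statement to be proved.
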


For $2g-2+n>0$, each correlator $\omega_{g,n}$ of $\mathcal{C}_{\bp^1}$ is a polynomial in $\xi^\alpha_{k}(z)$.
Using the basis in~\eqref{auxdif}, the topological recursion partition function of the spectral curve $\mathcal{C}_{\bp^1}$ coincides with the ancestor partition function~\eqref{partanc} of the Gromov--Witten invariants of $\bp^1$.
\begin{theorem}[\cite{DBOSS}] \label{th:dboss}
Let $\omega_{g,n}$ be the correlators of the topological recursion applied to the spectral curve~$\mathcal{C}$ defined by~\eqref{P1spec}. Then
\begin{gather*}
\frac{\overline{Z}_{\bp^1}\big(\hbar,\big\{\overline{t}^{\alpha}_k\big\}\big)}{\overline{Z}_{\bp^1}(\hbar,0)}=Z^{\mathcal{C}_{\bp^1}}\big(\hbar,\big\{\overline{t}^{\alpha}_k\big\}\big)=\exp\Bigg(\sum_{\substack{g \geq 0,\, n \geq 1 \\ 2g-2+n>0}}\frac{\hbar^{g-1}}{n!}\omega_{g,n}|_{\xi^\alpha_k=\overline{t}^\alpha_k}\Bigg).
\end{gather*}
\end{theorem}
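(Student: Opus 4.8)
The plan is to prove the identity by realising both sides as the same sum over stable graphs, via the Givental--Teleman reconstruction of the semisimple cohomological field theory (CohFT) on one side and Eynard's graph expansion of topological recursion on the other. Since the second equality in the statement is the definition of $Z^{\mathcal{C}_{\bp^1}}$, the content is the first equality; the normalisation by $\overline{Z}_{\bp^1}(\hbar,0)$ is what removes the $n=0$ (constant-map) contributions absent from the $n\geq 1$ sum defining $Z^{\mathcal{C}_{\bp^1}}$. Unwound, the claim is that for every $(g,n)$ with $2g-2+n>0$ the coefficient of $\prod_{i=1}^n \xi^{\alpha_i}_{b_i}$ in the polynomial expression for $\omega_{g,n}$ equals the ancestor invariant $\big\langle \prod_{i=1}^n \overline{\tau}^{\alpha_i}_{b_i}\big\rangle_g$.

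First I would use that the Gromov--Witten theory of $\bp^1$ is a homogeneous semisimple CohFT with flat unit, so by Teleman's classification its ancestor potential $\overline{Z}_{\bp^1}/\overline{Z}_{\bp^1}(\hbar,0)$ is determined by two pieces of data at a chosen semisimple point of the Frobenius manifold: the $2$-dimensional TFT (the Frobenius algebra $QH^*(\bp^1)$ with its canonical idempotents) and an $R$-matrix $R(u)\in\mathrm{End}(H)[[u]]$ reconstructed from the Dubrovin connection. Givental's formula then writes this ancestor potential as $\sum_\Gamma \frac{1}{|\mathrm{Aut}\,\Gamma|}(\text{contribution of }\Gamma)$, summed over stable graphs of genus $g$ with $n$ legs, each vertex carrying a Witten--Kontsevich point $n$-point function in the canonical frame, each edge carrying the bivector built from $R$, and each leg carrying an $R$-dressed descendant insertion.

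Next I would apply Eynard's theorem that the correlators $\omega_{g,n}$ of any spectral curve with simple ramifications decompose over the \emph{same} index set of stable graphs, with vertices weighted by the local expansion of the recursion kernel and of $\omega_{0,1}=y\,\dd x$ at the ramification points, edges weighted by the off-diagonal expansion of $B$ between ramification points, and legs given by the basis differentials, which for \eqref{P1spec} are precisely the $\xi^\alpha_k$ of \eqref{auxdif}. With both sides now presented as graph sums over the same combinatorial data, it remains to match the three local building blocks. The ramification points of $x=z+1/z$ are $z=\pm1$ with critical values $x(\pm1)=\pm2$, which are exactly the canonical coordinates $u_\pm=\pm2$ of $QH^*(\bp^1)$ at quantum parameter $q=1$; from the expansions of $x$, of $\dd y=\dd z/z$, and of $B$ there one reads off the idempotent normalisations and, via the steepest-descent (Laplace) transform of $B$ along contours through $z=\pm1$, a formal series in $u$. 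I would then check that the TFT data agrees with the canonical frame of $QH^*(\bp^1)$, that the Laplace series agrees with the $R$-matrix of the $\bp^1$ CohFT, and, using Theorem~\ref{th1p} and Remark~\ref{oddre}, that the leg basis $\xi^\alpha_k$ is the $R$-dressed image of the flat basis $\{\gamma_0,\gamma_1\}$, so that the specialisation $\xi^\alpha_k=\overline{t}^\alpha_k$ reproduces the ancestor descendant times.

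The main obstacle will be the identification of the $R$-matrix. Matching the saddle-point expansion of the Bergman kernel $B$ in the square-root coordinate near $z=\pm1$ against the $R$-matrix obtained from the Dubrovin connection of $QH^*(\bp^1)$ is a delicate asymptotic computation: one must track the idempotent normalisations carefully, fix the branch of the square root at each ramification point, and account for the logarithmic potential $y=\ln z$, which governs the dilaton shift and translation entering Givental's formula. Once the TFT, the $R$-matrix, and the translation are shown to coincide, Teleman's uniqueness theorem upgrades this equality of data to equality of the two ancestor potentials order by order in $\hbar$, which is the assertion of the theorem.
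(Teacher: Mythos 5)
The first thing to note is that the paper does not prove Theorem~\ref{th:dboss} at all: it is imported as a black box from the cited reference (Dunin-Barkowski--Orantin--Shadrin--Spitz), and the surrounding text only records how it will be used (Remark~\ref{ratcont}, the stable cases of Theorem~\ref{insertions}, and the proof of Theorem~\ref{th:vir}). So your proposal can only be compared against the cited source, not against an in-paper argument.

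Measured against that source, your outline is essentially the right one, and it is the same route: the DBOSS proof identifies Givental's stable-graph formula for the $R$-matrix action on a semisimple TFT with the graph decomposition of the topological recursion correlators, with legs expanded in the basis~\eqref{auxdif}, edge weights given by the Laplace (steepest-descent) transform of $B$ between the ramification points, and the dilaton shift/translation data read off from $\omega_{0,1}$; the application to $\bp^1$ then consists of checking that the curve~\eqref{P1spec} produces the canonical frame of $QH^*\big(\bp^1\big)$ at $q=1$ (canonical coordinates $u_\pm = \pm 2$, as you say) and the correct $R$-matrix, with Teleman's theorem supplying the expression of the ancestor potential as the Givental formula in the first place. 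Your treatment of the normalisation $\overline{Z}_{\bp^1}(\hbar,0)$ is also correct. The caveat is that, as you yourself flag, the proposal defers the single step that carries all the content: the explicit verification that the asymptotic expansion of the Laplace transform of $B$ at $z=\pm1$, in the square-root coordinate and with the branch choices fixed, equals the $R$-matrix of the Dubrovin connection of $QH^*\big(\bp^1\big)$ --- whose entries carry exactly the harmonic-number-type corrections visible in the $S$-matrix table after Proposition~\ref{Sixi} --- together with the matching of the translation coming from $y=\ln z$. Without that computation (or a homogeneity/uniqueness argument pinning down the $R$-matrix, which Teleman's classification does provide for this homogeneous CohFT, and which is how one can avoid the asymptotics), what you have is an accurate reconstruction of the strategy of the cited proof rather than a complete proof.
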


Theorem~\ref{th:dboss} states that the coefficients of the differentials $\xi^{0}_m$ and~$\xi^{1}_m$ correspond to insertions of stationary ancestor invariants $\overline{\tau}^0_m$, respectively~$\overline{\tau}^1_m$. To retrieve the descendant Gromov--Witten invariants from the correlators $\omega_{g,n}$ one must understand elements of the dual of the space of meromorphic differentials on the spectral curve which can be naturally realised as integration over contours on the spectral curve. The next section is devoted to technical aspects of integration over non-compact contours which need regularisation.
\begin{remark} \label{ratcont}
The proof of Theorem~\ref{th1p} in~\cite{DBOSS} uses Theorem~\ref{th:dboss} together with the linear functio\-nals~$\ci^1_k$ which are shown to encode the $S$-matrix coefficients required to produce stationary descendant invariants. An immediate consequence is that the Taylor expansion of $\omega_{g,n}(z_1,\ldots,z_n)$ around $z_i=\infty$ with respect to the local coordinate $1/x(z)$ gives $\Omega_{g,n}(x_1,\ldots,x_n)\dd x_1 \cdots \dd x_n$ when $2g-2+n>0$. In particular this gives a proof that there is an analytic continuation of $\Omega_{g,n}(x_1,\ldots,x_n)\dd x_1\cdots \dd x_n$ to a rational curve.
\end{remark}

\subsection{From local to global constraints on multidifferentials}

Let $\omega_{g,n}$ be the multidifferentials of the topological recursion for the spectral curve \eqref{P1spec}. We choose the determination of the logarithm to have a branchcut on ${\rm i}\mathbb{R}_{-}$, and such that $\ln(1) = 0$. With this choice, $y(z)$ is holomorphic in the neighborhood of $z = \pm 1$.

The topological recursion is such that for any $g \geq 0$ and $n \geq 1$, $\omega_{g,n}$ satisfy the linear and quadratic loop equations \cite{BEO}. The linear loop equations is a symmetry property with respect to the involution $z \mapsto 1/z$
\begin{gather}
\label{linearloopeq} \omega_{g,n}(z,z_2,\ldots,z_n) + \omega_{g,n}(1/z,z_2,\ldots,z_n) = \delta_{g,0}\delta_{n,2}\,\frac{\dd x(z_1)\dd x(z_2)}{(x(z_1) - x(z_2))^2}.
\end{gather}
The quadratic loop equations state that, for $I = \{2,\ldots,n\}$ and denoting $\dd x(\zz_{I}) = \prod\limits_{i = 2}^n \dd x(z_i)$,
\begin{gather}
\label{qgns} q_{g,n}(z;\zz_I) = \frac{1}{\dd x(z)^2}\Bigg(\omega_{g - 1,n + 1}(z,1/z,\zz_I) + \sum_{\substack{h + h' = g \\ J \sqcup J' = I}} \omega_{h,1 + |J|}(z,\zz_J)\omega_{h',1 + |J'|}(1/z,\zz_{J'})\Bigg)
\end{gather}
is holomorphic in a neighborhood of $z = \pm 1$. These two sets of equations are (by definition) equivalent to the local Virasoro constraints mentioned in the introduction. We would like to derive from them global constraints, which concern the Laurent expansion of $q_{g,n}$ at $x(z) = \infty$. We use the following notation.
\begin{definition}If $f$ is a $1$-form, we define
\begin{gather}
\label{Ldgnfgfdg} \mathcal{A}[f](x_1) := \sum_{a = {\rm pole}\,\,{\rm of}\,\,f} \Res_{z = a} \frac{f(z)\,\ln z}{x_1 - x(z)},
\\
\label{Ldefdede} \mathcal{L}[f](z_1,z_i) := 2\,\omega_{0,2}^{{\rm odd}}(z_1,z_i)\,\frac{f(z_1)}{\dd x_1} - \dd_i\left(\frac{\dd x_1}{x_1 - x_i}\,\frac{f(z_i)}{\dd x_i}\right),
\end{gather}
where
\begin{gather*}
\omega_{0,2}^{{\rm odd}}(z_1,z_2) := \frac{1}{2}\big(\omega_{0,2}(z_1,z_2) - \omega_{0,2}(1/z_1,z_2)\big) = \omega_{0,2}^{{\rm odd}}(1/z_1,1/z_2).
\end{gather*}
\end{definition}

The following result gives a preliminary form of global constraints, which will be exploited in Section~\ref{S7}.
\begin{lemma}\label{Vircomp} Assume $2g - 2 + n \geq 2$. Let $I := \{2,\ldots,n\}$ and for any $i \in I$, set $I_i := I \setminus \{i\}$. We have
\begin{gather*}
 \mathcal{A}\big[\omega_{g,n}(\cdot,\zz_I)\big](x_1)\,\dd x_1
 = \sum_{i = 2}^n \mathcal{L}\big[\omega_{g,n - 1}(\cdot,\zz_{I_i})\big](z_1,z_i)+ \frac{\omega_{g - 1, n + 1}(z_1,z_1,\zz_I)}{\dd x_1} \\
\hphantom{\mathcal{A}\big[\omega_{g,n}(\cdot,\zz_I)\big](x_1)\,\dd x_1=}{} + \sum_{\substack{h + h' = g \\ J \sqcup J' = I}}^{\circ\circ} \frac{\omega_{h,1 + |J|}(z_1,\zz_J)\omega_{h',1 + |J'|}(z_1,\zz_{J'})}{\dd x_1}.
\end{gather*}
Here, ${}^{\circ\circ}$ means that the terms involving $\omega_{0,1}$ and $\omega_{0,2}$ are excluded from the sum. For $(g,n) = (0,3)$, we have
\begin{gather*}
\mathcal{A}[\omega_{0,3}(\cdot,z_2,z_3)](x_1)\,\dd x_1 = - \frac{2\omega_{0,2}^{{\rm odd}}(z_1,z_2)\omega_{0,2}^{{\rm odd}}(z_1,z_3)}{\dd x_1} \\
\hphantom{\mathcal{A}[\omega_{0,3}(\cdot,z_2,z_3)](x_1)\,\dd x_1 =}{} + \dd_2\left(\frac{\dd x_1}{x_1 - x_2}\,\frac{\omega_{0,2}^{{\rm odd}}(z_2,z_3)}{\dd x_2}\right) + \dd_3\left(\frac{\dd x_1}{x_1 - x_3}\,\frac{\omega_{0,2}^{{\rm odd}}(z_2,z_3)}{\dd x_3}\right).
\end{gather*}
For $(g,n) = (1,1)$, we have $\mathcal{A}[\omega_{1,1}](x_1)\,\dd x_1 = -\omega_{0,2}(z_1,1/z_1)$.
\end{lemma}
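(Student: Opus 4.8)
The plan is to turn the two residues at the ramification points into a global residue computation, exploiting the fact that for this curve $y = \ln z$. Since $\omega_{g,n}(\cdot,\zz_I)$ is a polynomial in the $\xi^\alpha_k$, its only poles in the first variable are at $z = \pm 1$, so $\mathcal{A}[\omega_{g,n}(\cdot,\zz_I)](x_1) = \sum_{\alpha = \pm 1}\Res_{z = \alpha}\frac{\ln z\,\omega_{g,n}(z,\zz_I)}{x_1 - x(z)}$. The decisive observation is that $\ln z\,\omega_{g,n}(z,\zz_I)$ is essentially the $\omega_{0,1}$-part of the quadratic loop equation. Let $N_{g,n}(z;\zz_I)$ be the quadratic differential obtained from the numerator of $q_{g,n}$ by deleting the two terms containing $\omega_{0,1}$ (this is exactly the combination entering the recursion kernel); being built from the $\omega_{g',n'}$ other than $\omega_{0,1}$, all of which are rational, $N_{g,n}$ is a rational $1$-form. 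Using $\omega_{0,1} = \ln z\,\dd x$, $\omega_{0,1}(1/z) = -\ln z\,\dd x$ and the linear loop equation $\omega_{g,n}(1/z,\zz_I) = -\omega_{g,n}(z,\zz_I)$, the two deleted terms give $N_{g,n}(z;\zz_I)/\dd x(z) = q_{g,n}(z;\zz_I)\,\dd x(z) + 2\ln z\,\omega_{g,n}(z,\zz_I)$. Since the quadratic loop equation guarantees $q_{g,n}(\cdot;\zz_I)$ is holomorphic at $z = \pm 1$, only the second summand has residues there, yielding the clean reduction $\mathcal{A}[\omega_{g,n}(\cdot,\zz_I)](x_1) = \frac12\sum_{\alpha = \pm 1}\Res_{z = \alpha}\frac{N_{g,n}(z;\zz_I)}{\dd x(z)\,(x_1 - x(z))}$, in which the logarithm has disappeared.

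With the logarithm gone the integrand is rational in $z$, so I would apply the residue theorem on $\mathbb{P}^1$ (for generic $x_1$ and $\zz_I$, then continue): the residues at $z = \pm 1$ equal minus those at the remaining poles. These sit at $z = z_1, 1/z_1$ (from $1/(x_1 - x(z))$), at $z = z_i, 1/z_i$ for $i \in I$ (from the factors $\omega_{0,2}(z,z_i)$ and $\omega_{0,2}(1/z,z_i)$ hidden inside $N_{g,n}$), and at the two points $0, \infty$ over $x = \infty$. It is precisely these $\omega_{0,2}$-factors — invisible in the original left-hand side, where $\omega_{g,n}$ has no pole at $z_i$ — that generate the $z_i$-dependence of the statement. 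The residues at $z_1, 1/z_1$, after using the linear loop equation to trade the off-diagonal arguments $(z_1, 1/z_1)$ for the diagonal $(z_1, z_1)$, reproduce $\frac{\omega_{g-1,n+1}(z_1,z_1,\zz_I)}{\dd x_1} + \sum^{\circ\circ}\frac{\omega\,\omega}{\dd x_1}$; the residues at $z_i, 1/z_i$, where $\omega_{0,2}$ has a double pole, produce the derivative term $-\dd_i\!\big(\frac{\dd x_1}{x_1 - x_i}\frac{\omega_{g,n-1}(z_i,\zz_{I_i})}{\dd x_i}\big)$ of $\mathcal{L}[\omega_{g,n-1}(\cdot,\zz_{I_i})](z_1,z_i)$, while its companion $2\,\omega_{0,2}^{\rm odd}(z_1,z_i)\frac{\omega_{g,n-1}(z_1,\zz_{I_i})}{\dd x_1}$ is assembled from the $\omega_{0,2}$-part of the $z_1, 1/z_1$ residue.

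Finally I would check that the residues at $z = 0, \infty$ vanish for $2g - 2 + n \geq 2$, using the vanishing of $\omega_{g,n}$ there together with the decay of $1/(x_1 - x(z))$; this is what prevents a polynomial-in-$x_1$ ambiguity. The step I expect to be the main obstacle is the bookkeeping of the $\omega_{0,2}$-contributions. Because $\omega_{0,2}$ fails to be anti-invariant under $z \mapsto 1/z$ — its linear loop equation carries the inhomogeneous term $\frac{\dd x(z_1)\dd x(z_2)}{(x(z_1) - x(z_2))^2}$ — these terms simultaneously (i) spoil the naive off-/on-diagonal symmetrisation at $z_1, 1/z_1$, (ii) supply the extra poles at $z_i, 1/z_i$ that build $\mathcal{L}$ together with its derivative structure, and (iii) force the separate formulas for $(g,n) = (0,3)$ and $(1,1)$: there a diagonal $\omega_{0,2}(z_1,z_1)$ would be ill-defined and is correctly replaced by the regularised off-diagonal value $\omega_{0,2}(z_1,1/z_1)$. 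Matching all signs and symmetry factors across these three sources is the delicate point; once organised, the remaining computation is routine residue calculus.
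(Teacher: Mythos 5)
Your proposal is correct and follows essentially the same route as the paper: both exploit the holomorphicity of $q_{g,n}$ at $z=\pm1$ to identify $\mathcal{A}\big[\omega_{g,n}(\cdot,\zz_I)\big]$ with residues at $z=\pm 1$ of the rational ($\omega_{0,1}$-free) part of the quadratic loop equation, then evaluate those residues by moving the contour to $z_1,1/z_1$ and $z_i,1/z_i$ (using the linear loop equation to pass to diagonal arguments and to extract the $\omega_{0,2}^{\rm odd}$/derivative structure of $\mathcal{L}$), checking there is no contribution from $0,\infty$, with the $(0,3)$ and $(1,1)$ cases treated separately exactly as you indicate. The only difference is cosmetic: the paper starts from $0=\sum_{a=\pm1}\Res_{z=a}\frac{\dd x_1\,\dd x(z)}{x_1-x(z)}\,q_{g,n}(z,\zz_I)$ and isolates the $(0,1)$ terms as $\pm 2\mathcal{A}$, whereas you solve that same identity for $\mathcal{A}$ at the outset.
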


\begin{proof}Since $q_{g,n}$ is holomorphic in a neighborhood of $z = \pm 1$ we have
\begin{gather}
\label{qgndd} 0=\sum_{a = -1,1} \Res_{z = a} \frac{\dd x_1\,\dd x(z)}{x_1 - x(z)}\,q_{g,n}(z,z_2,\ldots,z_n),
\end{gather}
for $2g - 2 + n > 0$ and $x_1 \neq\pm 2$. To prove the lemma we are going to compute separately the contributions of the various terms in the right-hand side of \eqref{qgndd}.

 \textbf{Stable terms.} We observe that
\begin{gather*}
\omega_{g - 1,n + 1}(z,1/z,\zz_I) = -\omega_{g,n}(z,z,\zz_I) = \omega_{g,n}(1/z,1/z,\zz_I),
\end{gather*}
and $\omega_{g - 1,n + 1}(z,z,\zz_I) \in O\big(\dd x(z)^2/x(z)^{4}\big)$ when $z \rightarrow \infty$. Therefore, after division by $(x(z) - x_1)\dd x(z)$ this expression has no residues at $z = 0$ and $\infty$. We compute
\begin{align*}
A_{g,n}^{(g - 1,n + 1)} & := \sum_{a = -1,1} \Res_{z = a} \frac{\dd x_1}{x_1 - x(z)}\,\frac{\omega_{g - 1,n + 1}(z,1/z,\zz_I)}{\dd x(z)} \\
 & = \sum_{a = z_1,1/z_1} \Res_{z = a} \frac{\dd x_1}{x(z) - x_1}\,\frac{\omega_{g - 1,n + 1}(z,1/z,\zz_I)}{\dd x(z)} \\
 & = \frac{2 \omega_{g - 1,n + 1}(z_1,1/z_1,\zz_I)}{\dd x_1},
\end{align*}
noticing there are no contributions from $0$ and $\infty$ when moving the contour. In case $(g,n) \neq (1,1)$ this is also equal to
\begin{gather*}
A_{g,n}^{(g-1,n + 1)} = -\frac{2\omega_{g - 1,n + 1}(z_1,z_1,\zz_I)}{\dd x_1}.
\end{gather*}

Likewise if $h + h' = g$ and $J \sqcup J' = I$ such that $2h- 2 + (1 + |J|) > 0$ and $2h' - 2 + (1 + |J'|) > 0$, we compute
\begin{align*}
A_{g,n}^{(h,J),(h',J')} & := \sum_{a=-1,1}\Res_{z = a} \frac{\dd x_1}{x_1 - x(z)}\,\frac{\omega_{h,1 + |J|}(z,\zz_J)\omega_{h',1 + |J'|}(1/z,\zz_{J'})}{\dd x(z)} \\
& = \frac{2\omega_{h,1 + |J|}(z_1,\zz_J)\omega_{h',1 + |J'|}(1/z_1,\zz_{J'})}{\dd x_1} = -\frac{2\omega_{h,1 + |J|}(z_1,\zz_J)\omega_{h',1 + |J'|}(z_1,\zz_{J'})}{\dd x_1}.
 \end{align*}

\textbf{The $\boldsymbol{(0,2)\times (g,n - 1)}$ term.} Assume that $(g,n) \neq (0,3)$. Fix $i \in \{2,\ldots,n\}$ and let $I_i = I\setminus\{i\}$. We would like to compute, following the previous steps,
\begin{gather*}
A^{(g,n - 1)}_{g,n} := \sum_{a=-1,1}\Res_{z = a} \frac{\dd x_1}{x_1 - x(z)}\,\frac{\omega_{g,n - 1}(z,\zz_{I_i})\omega_{0,2}(1/z,z_i) + \omega_{g,n - 1}(1/z,\zz_{I_i})\omega_{0,2}(z,z_i)}{\dd x(z)},
\end{gather*}
but there are two notable differences. Firstly, there is a shift in the antisymmetry relation for~$\omega_{0,2}$
\begin{gather*}
\omega_{0,2}(z,z_i) + \omega_{0,2}(1/z,z_i) = \frac{\dd x(z)\dd x(z_i)}{(x(z) - x(z_i))^2}.
\end{gather*}
Secondly, the presence of $\omega_{0,2}$ creates a pole at $z = z_i$ and $1/z_i$. We obtain
\begin{align*}
A^{(g, n -1)}_{g,n} & = \sum_{a=z_i,1/z_i,z_1,1/z_1}\!\!\!\!\!\!\!\!\!\Res_{z = a}\!\frac{\dd x_1}{x(z) - x_1}\frac{-2\omega_{g,n - 1}(z,\zz_{I_i})\omega_{0,2}(z,z_i) + \omega_{g,n - 1}(z,\zz_{I_i}) \frac{\dd x(z)\dd x(z_i)}{(x(z) - x(z_i))^2}}{\dd x(z)} \\
& = \sum_{a = z_i,z_1} \Res_{z = a} \frac{2 \dd x_1}{x(z) - x_1}\frac{-2\omega_{g,n - 1}(z,\zz_{I_i})\omega_{0,2}(z,z_i) + \omega_{g,n - 1}(z,\zz_{I_i}) \frac{\dd x(z)\dd x(z_i)}{(x(z) - x(z_i))^2}}{\dd x(z)},
\end{align*}
since the integrand is again invariant under $z \mapsto 1/z$. There is a double pole at $z = z_i$ and a~simple pole at $z = z_1$. We obtain
\begin{align*}
A^{(g,n - 1)}_{(g,n)} & = 2 \dd_i\left(\frac{\omega_{g,n - 1}(\zz_{I})\,\dd x_1}{\dd x_i (x_1 - x_i)}\right) + 2 \frac{\omega_{g,n - 1}(z_1,\zz_{I_i})}{\dd x_1}\left(-2\omega_{0,2}(z_1,z_i) + \frac{\dd x_1\dd x_i}{(x_1 - x_i)^2}\right) \\
& = 2 \dd_i\left(\frac{\omega_{g,n - 1}(\zz_{I})\,\dd x_1}{\dd x_i (x_1 - x_i)}\right) - 2\big(\omega_{0,2}(z_1,z_i) - \omega_{0,2}(z_1,1/z_i)\big) \frac{\omega_{g,n - 1}(z_1,\zz_{I_i})}{\dd x_1} \\
 & = 2 \dd_i\left(\frac{\omega_{g,n - 1}(\zz_{I})\,\dd x_1}{\dd x_i (x_1 - x_i)}\right) + 2 \omega_{0,2}(1/z_1,z_i) \omega_{g,n - 1}(z_1,\zz_{I_i}) \\
 & \quad {} + 2 \omega_{0,2}(z_1,z_i)\omega_{g,n - 1}(1/z_1,\zz_{I_i}).
\end{align*}

 \textbf{$\boldsymbol{(0,2) \times (0,2)}$ term for the $\boldsymbol{(0,3)}$ case.} We have to consider
\begin{equation*}
\begin{split}
A_{0,3}^{(0,2),(0,2)} & := \sum_{a=-1,1}\Res_{z = a} \frac{\dd x_1}{x_1 - x(z)}\,\frac{\omega_{0,2}(z,z_2)\omega_{0,2}(1/z,z_3) + \omega_{0,2}(1/z,z_2)\omega_{0,2}(z,z_3)}{\dd x(z)} \\
& = \sum_{a=-1,1}\Res_{z = a} \frac{\dd x_1}{x_1 - x(z)}\,\frac{\omega_{0,2}(z,z_2)\omega_{0,2}(1/z,z_3)}{\dd x(z)} + (z_2 \leftrightarrow z_3) \\
& = \sum_{a = z_1,1/z_1,z_2,1/z_3}\Res_{z = a}\frac{\dd x_1}{x(z) - x_1}\,\frac{\omega_{0,2}(z,z_2)\omega_{0,2}(1/z,z_3)}{\dd x(z)} + (z_2 \leftrightarrow z_3).
\end{split}
\end{equation*}
In the first term, we have a simple pole at $z = z_1,1/z_1$ and double poles at $z = z_2$ and $z = 1/z_3$. Using $\omega_{0,2}(1/z,z_3) = \omega_{0,2}(z,1/z_3)$, we get
\begin{gather*}
A_{0,3}^{(0,2),(0,2)} = \frac{\omega_{0,2}(z_1,z_2)\omega_{0,2}(1/z_1,z_3) + \omega_{0,2}(1/z_1,z_2)\omega_{0,2}(z_1,z_3)}{\dd x_1} \\
\hphantom{A_{0,3}^{(0,2),(0,2)} =}{} + \dd_{2}\left(\frac{\dd x_1}{\dd x_2}\,\frac{\omega_{0,2}(z_2,1/z_3)}{x_2 - x_1}\right) + \dd_{3}\left(\frac{\dd x_1}{\dd x_3} \frac{\omega_{0,2}(z_2,1/z_3)}{x_3 - x_1}\right) + (z_2 \leftrightarrow z_3).
\end{gather*}
This can be written in terms of the odd part of $\omega_{0,2}$
\begin{gather*}
A_{0,3}^{(0,2),(0,2)} = -\frac{2\omega_{0,2}^{{\rm odd}}(z_1,z_2)\omega_{0,2}^{{\rm odd}}(z_1,z_3)}{\dd x_1} + \dd_2\left(\frac{\dd x_1}{\dd x_2}\,\frac{\omega_{0,2}^{{\rm odd}}(z_2,z_3)}{x_1 - x_2}\right) \\
\hphantom{A_{0,3}^{(0,2),(0,2)} =}{} + \dd_3\left(\frac{\dd x_1}{\dd x_3} \frac{\omega_{0,2}^{{\rm odd}}(z_2,z_3)}{x_1 - x_3}\right)
 + \frac{\dd x_1\,\dd x_2\,\dd x_3}{2(x_1 - x_2)^2(x_1 - x_3)^2}\\
\hphantom{A_{0,3}^{(0,2),(0,2)} =}{} - \dd_2\left(\frac{\dd x_1\,\dd x_3}{2(x_1 - x_2)(x_2 - x_3)^2}\right) - \dd_3\left(\frac{\dd x_1\,\dd x_2}{2(x_1 - x_3)(x_2 - x_3)^2}\right)
 + (z_2 \leftrightarrow z_3) \\
\hphantom{A_{0,3}^{(0,2),(0,2)}}{} = -\frac{4\omega_{0,2}^{{\rm odd}}(z_1,z_2)\omega_{0,2}^{{\rm odd}}(z_1,z_3)}{\dd x_1} + 2\dd_2\left(\frac{\dd x_1}{\dd x_2} \frac{\omega_{0,2}^{{\rm odd}}(z_2,z_3)}{x_1 - x_2}\right) \\
\hphantom{A_{0,3}^{(0,2),(0,2)} =}{}+ 2\dd_3\left(\frac{\dd x_1}{\dd x_3} \frac{\omega_{0,2}^{{\rm odd}}(z_2,z_3)}{x_1 - x_3}\right).
\end{gather*}

 \textbf{The $\boldsymbol{(0,1)}$ terms.} The last term is
\begin{gather*}
A_{g,n}^{(0,1),(g,n)} := \sum_{a=-1,1}\Res_{z =a} \frac{\dd x_1}{x_1 - x(z)} \frac{\omega_{g,n}(z,\zz_I)\omega_{0,1}(1/z) + \omega_{g,n}(1/z,\zz_I)\omega_{0,1}(z)}{\dd x(z)}.
\end{gather*}
Using the involution $z \mapsto 1/z$ and recalling that $\omega_{0,1}(z) = \ln z\,\dd x(z)$, we rewrite it for $2g - 2 + n \allowbreak > 0$ as
\begin{gather}
\label{integA} A_{g,n}^{(0,1),(g,n)} = -2\sum_{a=-1,1}\Res_{z =a} \frac{\dd x_1}{x_1 - x(z)} \frac{\omega_{g,n}(z,\zz_I)\omega_{0,1}(z)}{\dd x(z)} = 2 \mathcal{A}\big[\omega_{g,n}(\cdot,\zz_I)\big](x_1).
\end{gather}

This exhausts the study of the terms contributing to \eqref{qgndd}. Summing them up concludes the proof of the lemma.
\end{proof}

\section[Properties of $\mathcal{A}$ and $\mathcal{L}$]{Properties of $\boldsymbol{\mathcal{A}}$ and $\boldsymbol{\mathcal{L}}$}

The contribution of unstable $\omega$s in the global constraints of Lemma~\ref{Vircomp} is more complicated than the others and need special care. This technical section establishes their properties, that will be used in Section~\ref{S7}.

\subsection[Laurent expansion of $\protect{\mathcal{A}[f]}$]{Laurent expansion of $\boldsymbol{\mathcal{A}[f]}$}

We are going to compute the Laurent series expansion of $\mathcal{A}[f](x_1)$, defined in \eqref{Ldgnfgfdg}, when \smash{$x_1 \rightarrow \infty$}, where $f$ is a meromorphic $1$-form on $\mathcal{S}$ with poles away from $z = 0,\infty$ and such that $f(z) + f(1/z) = 0$.

\begin{figure}\centering
\includegraphics[width=0.47\textwidth]{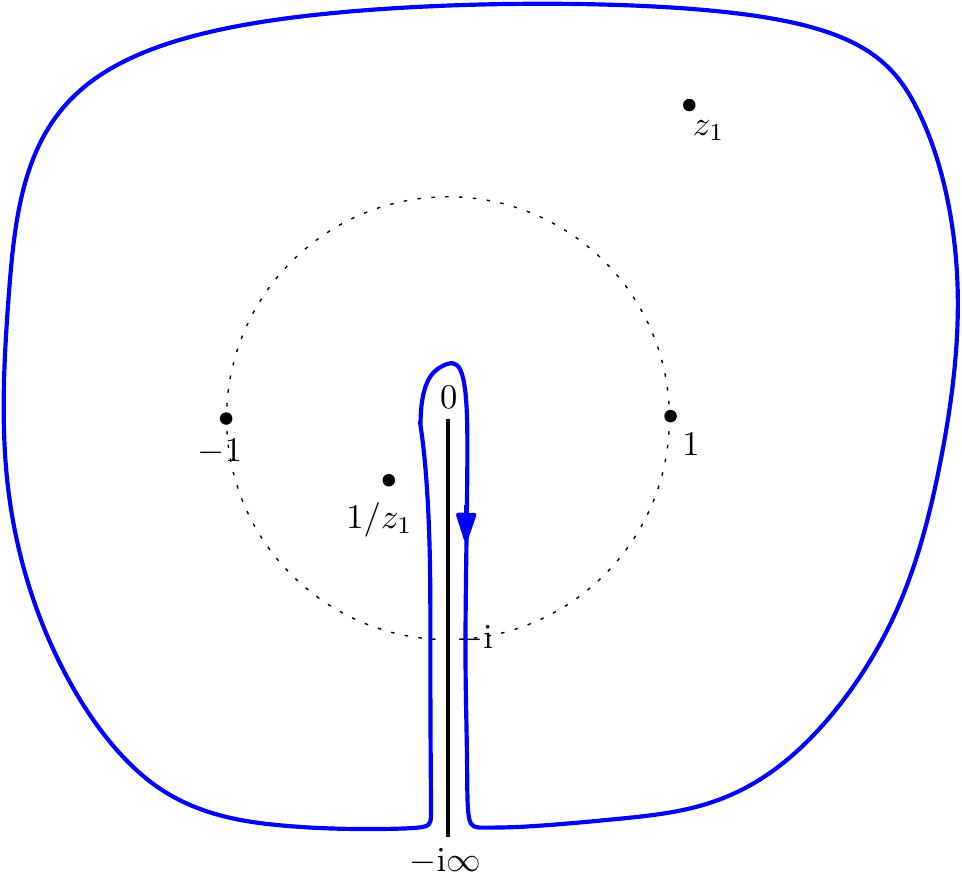}
\caption{The integration contour in $\mathcal{S}$ (the $z$-plane).}\label{Cont1}
\end{figure}

Let us assume $\operatorname{Re}x_1 > 0$ and move the contour (see Fig.~\ref{Cont1}). It will surround the poles at $z = z_1$ and $1/z_1$ -- which give equal contributions and which we handle as in the previous paragraphs~-- as well as the cut of the logarithm for $z$ on the nonpositive imaginary axis, together with a~half-circle arbitrarily close to $0$ and an arbitrarily large circle. When $z$ goes to $\infty$ in~$\mathbb{C}\setminus {\rm i}\mathbb{R}_{-}$ we have $f(z) \in O\big(\dd x(z)/x(z)^2\big)$ since $f$ has no pole at $\infty$. Therefore the integrand in~\eqref{integA} behaves as $O\big(|\dd z|\,|z|^{-3}\ln |z|\big)$ and the large circle pushed to $\infty$ gives a zero contribution. By symmetry $z \mapsto 1/z$ of the integrand the same is true for the contribution of the half-circle pushed to~$0$. The discontinuity of~$\ln z$ on its branchcut, from right to left is then $-2{\rm i}\pi$. Therefore
\begin{align*}
\mathcal{A}[f](x_1) & = -\frac{2f(z_1)}{\dd x_1} \ln\big(x(z_1)/z_1\big) + \frac{2f(z_1)}{\dd x_1} \ln x_1 + \int_{0}^{-{\rm i\infty}} \frac{f(z)}{x_1 - x(z)} \\
& = -\frac{2f(z_1)}{\dd x_1} \ln\big(x(z_1)/z_1\big) + \frac{2f(z_1)}{\dd x_1} \ln x_1 - \left(\int_{0}^{-{\rm i}} + \int_{-{\rm i}}^{-{\rm i}\infty}\right)\frac{f(z)}{x_1 - x(z)}.
\end{align*}
We use $z \mapsto 1/z$ convert the first integral from $0$ to $-{\rm i}$ into a integral from $+{\rm i}\infty$ to ${\rm i}$ in the $z$-plane. This also multiplies $f(z)$ by a minus sign according to~\eqref{linearloopeq}. The resulting integral in the $z$-plane is then equivalent to the integral over the positive imaginary axis in the $x$-plane. The second integral from $-{\rm i}$ to $-{\rm i}\infty$ in the $z$-plane is equivalent to an integral over the negative imaginary axis in the $x$-plane. We therefore obtain
\begin{gather*}
\mathcal{A}[f](x_1) = -\frac{2f(z_1)}{\dd x_1}\,\ln\big(x(z_1)/z_1\big) + \frac{2f(z_1)}{\dd x_1} \ln x_1 - \left(\int_{0}^{+{\rm i}\infty} + \int_{0}^{-{\rm i}\infty}\right)\frac{f(Z(x))}{x_1 - x}.
\end{gather*}
The integrals are closely related to the Hilbert transform, defined for a function $F\colon \mathbb{R}_{\geq 0} \rightarrow \mathbb{R}$ and with $v \in \mathbb{C}\setminus \mathbb{R}_{> 0}$ by
\begin{gather*}
\mathcal{H}[F](v) := \int_{0}^{\infty} \frac{F(u)\,\dd u}{v + u}.
\end{gather*}
Namely, we have with $\check{F}(x) = f(Z(x))/\dd x$
\begin{gather}
\mathcal{A}[f](x_1) = - \frac{2f(z_1)}{\dd x_1} \ln\big(x(z_1)/z_1\big) + \frac{2f(z_1)}{\dd x_1} \ln x_1 + \mathcal{H}[\check{F}({\rm i} \cdot)]({\rm i}x_1) + 2\mathcal{H}[\check{F}(-{\rm i}\cdot)](-{\rm i}x_1).\!\!\!\label{Agngn}
\end{gather}
To obtain the asymptotic expansion of the last terms we can rely on the following result
\begin{lemma}[\cite{WongMcClure}]
\label{HilbertA} Let $F\colon \mathbb{R}_{> 0} \rightarrow \mathbb{C}$ such that, for any integer $b \geq 0$
\begin{gather*}
F(u) = \sum_{a = 0}^{b + 1} \frac{f_{k}}{u^{k + 1}} + r_{b + 1}(u),\qquad {\rm and}\qquad \sup_{u \geq 0} u^{b + 1 + \eta}\,|r_{b + 1}(u)| \leq C_{b}
\end{gather*}
for some $\eta \in (0,1)$ and $C_{b} > 0$. Then for $u \rightarrow \infty$ away from the real axis we have
\begin{gather*}
\mathcal{H}[F](u) = \left(\sum_{a = 0}^{b + 1} \frac{(-1)^{a} f_{a}}{u^{a + 1}}\right)\ln u + \sum_{a = 0}^{b + 1} \frac{(-1)^{a}}{y^{a + 1}}\big({-}H_{a}f_{a} + \mathcal{J}_{a}[F]\big) + O\big(|u|^{-(b + 1 + \alpha)}\big),
\end{gather*}
where $\ln$ is the logarithm with its standard choice of branchcut on the negative real axis, and
\begin{gather*}
\mathcal{J}_{b}[F] := \lim_{\epsilon \rightarrow 0^+} \left(-f_{b}\ln \epsilon + \int_{\epsilon}^{+\infty} (u - \epsilon)^{b} r_{b}(u)\,\dd u\right).
\end{gather*}
\end{lemma}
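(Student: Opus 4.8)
The plan is to prove the expansion by the classical Mellin-transform method for Stieltjes-type integrals, realising $\mathcal{H}[F]$ as a Mellin--Barnes integral and shifting the contour leftward to collect residues. First I would introduce the Mellin transform $\widetilde{F}(s) = \int_0^\infty u^{s-1}F(u)\,\dd u$, which converges and is holomorphic in the strip $0 < \operatorname{Re}s < 1$ since $F$ is integrable near $0$ and behaves like $f_0/u$ at infinity. Using the elementary evaluation $\int_0^\infty \frac{u^{s-1}}{v+u}\,\dd u = \frac{\pi v^{s-1}}{\sin \pi s}$ for $0 < \operatorname{Re}s < 1$ together with the Mellin--Parseval identity, I would write
\[ \mathcal{H}[F](v) = \frac{1}{2\pi {\rm i}}\int_{c - {\rm i}\infty}^{c + {\rm i}\infty} \frac{\pi\,v^{s - 1}}{\sin \pi s}\,\widetilde{F}(1 - s)\,\dd s, \qquad 0 < c < 1. \]
The rapid decay $\big|\pi/\sin\pi s\big| = O\big({\rm e}^{-\pi|\operatorname{Im}s|}\big)$ along vertical lines controls the horizontal tails and justifies the manipulations, provided $v$ stays away from $\mathbb{R}_{>0}$ where $v^{s-1}$ is well behaved.

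Because $F$ decays only to finite order, $\widetilde{F}(w)$ continues meromorphically to $\operatorname{Re}w < b+2+\eta$, with a \emph{simple} pole at each $w = a+1$ $(a = 0,\dots,b+1)$ of residue $-f_a$; I denote by $c_a$ the finite part there, so $\widetilde{F}(w) = \frac{-f_a}{w - a - 1} + c_a + O(w - a - 1)$, the continuation past $w=b+2$ being controlled by the remainder bound $\sup_u u^{b+1+\eta}|r_{b+1}(u)| \le C_b$. Pushing the contour leftward past $s = 0, -1, \dots, -(b+1)$ and stopping on $\operatorname{Re}s = c' \in (-(b+1+\eta),-(b+1))$, each crossing meets the pole of $\frac{\pi}{\sin\pi s}$ at $s = -a$ \emph{and} the pole of $\widetilde{F}(1-s)$ at $s = -a$, so the integrand acquires a \emph{double} pole. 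Writing $s = -a + \delta$ and using $\frac{\pi}{\sin\pi s} = \frac{(-1)^a}{\delta}\big(1 + O(\delta^2)\big)$, $v^{s-1} = v^{-a-1}\big(1 + \delta\ln v + \cdots\big)$ and $\widetilde{F}(1 - s) = \frac{f_a}{\delta} + c_a + \cdots$, the residue works out to
\[ \Res_{s = -a}\ =\ (-1)^a v^{-a-1}\big(f_a \ln v + c_a\big). \]

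Summing these residues and bounding the integral on the shifted contour by $O\big(|v|^{-(b+1+\eta)}\big)$ — again via the remainder estimate, with the exponential kernel decay ensuring convergence — reproduces exactly the stated expansion, \emph{provided} $c_a = -H_a f_a + \mathcal{J}_a[F]$. Establishing this identity is the main obstacle, and it is where the harmonic numbers enter: they measure the discrepancy between the analytic finite part $c_a$ of the Mellin transform and the explicitly regularised integral $\mathcal{J}_a[F]$ appearing in the statement. It is worth stressing that the digamma contributions coming from $\frac{\pi}{\sin \pi s} = \Gamma(s)\Gamma(1-s)$ cancel in pairs near $s=-a$, so the $H_a$ does \emph{not} originate from the kernel.

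I would settle the identity by computing $c_a$ directly: splitting $\widetilde{F}(w) = \int_0^1 u^{w-1}F\,\dd u + \int_1^\infty u^{w-1}F\,\dd u$, inserting $F = \sum_{c=0}^{a}f_c u^{-(c+1)} + r_a$ in the second piece and cancelling the pole yields a closed expression for $c_a$ in terms of $\int_0^1 u^a F\,\dd u$, $\int_1^\infty u^a r_a\,\dd u$ and $\sum_{c<a} f_c/(a-c)$. On the other side, I would expand $(u-\epsilon)^a = \sum_k \binom{a}{k}(-\epsilon)^{a-k}u^k$ inside $\mathcal{J}_a[F]$ and track the $\epsilon\to 0^+$ limit against the near-origin behaviour $r_a(u) \sim -\sum_{c\le a} f_c u^{-(c+1)}$; the boundary terms $\epsilon^{a-k}\int_\epsilon^\infty u^k r_a\,\dd u$ contribute finite limits whose sum rearranges into $\mathcal{J}_a[F] = c_a + H_a f_a$. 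I have verified this for $a=0$ (where $H_0=0$ and $\mathcal{J}_0[F]=c_0$) and for $a=1$ (where the single surviving term $\epsilon\int_\epsilon^\infty r_1\,\dd u \to -f_1$ gives $\mathcal{J}_1[F] = c_1 + f_1$, i.e.\ $c_1 = -H_1 f_1 + \mathcal{J}_1[F]$), and the general case is the bookkeeping of the harmonic sum $H_a = \sum_{k=1}^a 1/k$ produced by these boundary contributions. The only genuinely delicate points are the uniformity of the contour estimates — handled by the exponential decay of $\pi/\sin\pi s$ and the remainder bound — and this combinatorial matching of the two regularisations.
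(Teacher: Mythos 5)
The first thing to say is that the paper contains no proof of Lemma~\ref{HilbertA} to compare against: it is imported (typos and all) from the reference \cite{WongMcClure}, so the only question is whether your argument stands on its own. It does, and it is a genuinely different route from the one in that reference, which obtains the expansion by direct subtraction of the asymptotic series with explicit remainder integrals rather than by contour integration. Your Mellin--Barnes representation $\mathcal{H}[F](v)=\frac{1}{2\pi{\rm i}}\int_{c-{\rm i}\infty}^{c+{\rm i}\infty}\frac{\pi v^{s-1}}{\sin \pi s}\,\widetilde{F}(1-s)\,\dd s$ is justified by Fubini, since the kernel decays like ${\rm e}^{-(\pi-|\arg v|)|\operatorname{Im}s|}$ and $\widetilde{F}$ is bounded on vertical lines; the meromorphic continuation of $\widetilde{F}$ with simple poles of residue $-f_a$ at $w=a+1$ follows from splitting the integral at $u=1$; and the double-pole residue $(-1)^a v^{-a-1}(f_a\ln v+c_a)$ is exactly right, including your observation that $\pi/\sin\pi s$ contributes no constant term at its poles, so the harmonic numbers cannot come from the kernel. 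You also locate the crux correctly: everything reduces to $c_a=-H_a f_a+\mathcal{J}_a[F]$, and this identity is true. Writing $c_a=\int_0^1 u^a F(u)\,\dd u-\sum_{c=0}^{a-1}\frac{f_c}{a-c}+\int_1^{\infty}u^a r_a(u)\,\dd u$ and comparing with $\mathcal{J}_a[F]$, the whole discrepancy is carried by $\int_{\epsilon}^{1}(u-\epsilon)^a u^{-a-1}\dd u=-\ln\epsilon-H_a+o(1)$, i.e.\ by the classical identity $\sum_{j=1}^{a}\binom{a}{j}\frac{(-1)^j}{j}=-H_a$; your $a=0,1$ verifications are consistent with this, and the general case is precisely this one-line binomial computation, so the ``bookkeeping'' you defer is genuinely routine. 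What your approach buys is conceptual clarity (collision of the kernel poles with the Mellin poles produces the logarithms, and $H_a$ is exposed as the mismatch between the Mellin finite part and the real-variable regularisation $\mathcal{J}_a$); what it gives up is the explicit error term that the subtraction method of \cite{WongMcClure} provides, though your shifted contour at $\operatorname{Re}s=c'$ actually yields the stronger bound $O\big(|v|^{c'-1}\big)$, better than the one you (and the lemma) state. Two small repairs: the evaluation $\int_0^{\infty}\frac{u^{s-1}}{v+u}\,\dd u=\frac{\pi v^{s-1}}{\sin\pi s}$ with the principal branch requires $v\notin\mathbb{R}_{\leq 0}$, not $v$ ``away from $\mathbb{R}_{>0}$'' as you wrote (the paper's own domain statement for $\mathcal{H}$ has the same defect); and you should make explicit the standing assumption that $F$ is integrable near $u=0$, which the lemma's hypotheses do not state but which both $\mathcal{H}[F]$ and your strip of holomorphy $0<\operatorname{Re}s<1$ require, and which holds in the paper's application.
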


\begin{corollary}\label{LEPTH}Let $f$ be a meromorphic $1$-form on $\mathcal{S}$ with poles away from $z = 0,\infty$ and such that $f(z) + f(1/z) = 0$. We have the Laurent series expansion when $x_1 \rightarrow \infty$
\begin{gather*}
\sum_{a=\pm 1}\Res_{z =a} \frac{f(z)\,\ln z}{x_1 - x(z)} \sim - 2f(z_1)\ln\big(x(z_1)/z_1\big) + \sum_{b \geq 0} \frac{(b + 1)!}{x_1^{b + 2}}\big(2H_{b + 1}\mathcal{I}_{b}^1[f] + \mathcal{I}_{b + 1}^0[f]\big).
\end{gather*}
where $\mathcal{I}_b^0$ was introduced in the introduction, equation~\eqref{I^0}.
\end{corollary}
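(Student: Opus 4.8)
The plan is to feed the Hilbert-transform asymptotics of Lemma~\ref{HilbertA} into the exact identity~\eqref{Agngn} already derived for $\mathcal{A}[f](x_1)$. That identity writes $\mathcal{A}[f]$ as the sum of a residue term $-\frac{2f(z_1)}{\dd x_1}\ln(x(z_1)/z_1)$, an explicit logarithm $\frac{2f(z_1)}{\dd x_1}\ln x_1$, and the two Hilbert transforms $\mathcal{H}[\check{F}(\mathrm i\,\cdot)](\mathrm i x_1)+\mathcal{H}[\check{F}(-\mathrm i\,\cdot)](-\mathrm i x_1)$ with $\check{F}(x)=f(Z(x))/\dd x$. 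Since the residue term is already the announced non-Laurent contribution carrying the branch behaviour of $z_1=Z(x_1)$, the entire task reduces to showing that the explicit logarithm and the two Hilbert transforms together produce the Laurent series $\sum_{b\ge 0}\frac{(b+1)!}{x_1^{b+2}}\big(2H_{b+1}\mathcal{I}_b^1[f]+\mathcal{I}_{b+1}^0[f]\big)$.

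First I would pin down the expansion data. Because $f$ has no pole at $z=0$, the function $\check{F}$ is holomorphic near $x=\infty$ with convergent expansion $\check{F}(x)=\sum_{k\ge 0}c_k x^{-(k+1)}$; comparing with $\mathcal{I}_b^1[f]=-\Res_\infty \frac{x^{b+1}}{(b+1)!}f$ gives $c_{b+1}=(b+1)!\,\mathcal{I}_b^1[f]$, while oddness of $f$ forces $c_0=0$. The maps $u\mapsto\check{F}(\pm\mathrm i u)$ then satisfy the hypotheses of Lemma~\ref{HilbertA} with coefficients $f_a=c_a(\pm\mathrm i)^{-(a+1)}$ and a remainder $r_{b+1}$ that is exactly the tail $\check{\mathcal{R}}_{b+1}[f]$ of the introduction, namely $f\circ Z$ with its polar part $\sum_{a=0}^{b}\frac{(a+1)!\,\dd x}{x^{a+2}}\mathcal{I}_a^1[f]$ removed.

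Assembling the two expansions is then bookkeeping of the powers of $\pm\mathrm i$. Using $(\pm\mathrm i)^{-2(a+1)}=(-1)^{a+1}$, the $\ln u$ series of each Hilbert transform collapses to a multiple of $\check{F}(x_1)\ln(\pm\mathrm i x_1)$, and together with the explicit logarithm — and the identity $\ln(\mathrm i x_1)+\ln(-\mathrm i x_1)=2\ln x_1$ — all logarithms, together with the accompanying $\pm\mathrm i\pi/2$ constants, cancel, so the final answer contains no $\ln x_1$. The same sign bookkeeping turns the $-H_a f_a$ terms of Lemma~\ref{HilbertA} into $2H_a c_a$, which at order $x_1^{-(b+2)}$ is $2H_{b+1}(b+1)!\,\mathcal{I}_b^1[f]$ and reproduces the $2H_{b+1}\mathcal{I}_b^1[f]$ coefficient. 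I would also check that the spurious $x_1^{-1}$ term ($a=0$) vanishes, which follows from $c_0=0$ together with the two $\mathcal{J}_0$ integrals combining into the integral of the odd form $f$ over the imaginary axis.

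The hard part will be the last identification: the surviving $\mathcal{J}$-terms must assemble into $(b+1)!\,\mathcal{I}_{b+1}^0[f]$. I would substitute $x=\pm\mathrm i u$ to turn $\int_\epsilon^{\infty}(u-\epsilon)^{b+1} r_{b+1}(\pm\mathrm i u)\,\dd u$ into contour integrals of $\frac{x^{b+1}}{(b+1)!}\check{\mathcal{R}}_{b+1}[f](x\pm\mathrm i\epsilon)$ along $[0,\pm\mathrm i\infty)$, observing that the shift-the-argument regularisation used in $\mathcal{J}_{b+1}$ becomes, under this substitution, precisely the contour-shift regularisation of~\eqref{I^0}. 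The two $\ln\epsilon$ counterterms $-f_{b+1}\ln\epsilon$, weighted by the prefactors $(-1)^{b+1}(\pm\mathrm i)^{-(b+2)}$, add up to $2c_{b+1}\ln\epsilon=2(b+1)!\,\mathcal{I}_b^1[f]\ln\epsilon$, matching the counterterm hard-wired into~\eqref{I^0}. This reconciliation of the two regularisation schemes is the delicate step, and it is exactly what makes the definition~\eqref{I^0} natural: $\mathcal{I}_{b+1}^0[f]$ is built so that its $\ln\epsilon$-subtraction and its $\check{\mathcal{R}}_{b+1}[f]$-integrals reproduce the Hilbert-transform data, leaving the real coefficient $2H_{b+1}\mathcal{I}_b^1[f]+\mathcal{I}_{b+1}^0[f]$.
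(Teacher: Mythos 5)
Your overall strategy is exactly the paper's: start from the identity \eqref{Agngn}, apply Lemma~\ref{HilbertA} to the two Hilbert transforms with $f_a=c_a(\pm\mathrm{i})^{-(a+1)}$ and $r_{b+1}=\check{\mathcal{R}}_{b+1}[f]$, cancel the $\ln x_1$ terms against the explicit logarithm, and convert the $\mathcal{J}$-terms into $\mathcal{I}^0$ via the substitution $x=\pm\mathrm{i}u$, under which the endpoint-shift regularisation of $\mathcal{J}$ becomes the contour-shift regularisation of \eqref{I^0}. (Your symmetric coefficients $1,1$ on the two Hilbert transforms are in fact the correct ones; the stray factors of $2$ in the paper's \eqref{Agngn} and in its final display are typos, as one can check by matching against the definition \eqref{I^0}.)

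There is, however, one step that fails: the claim that the $x_1^{-1}$ term vanishes. The two $\mathcal{J}_0$ integrals do \emph{not} combine into an integral of $f$ over the full imaginary axis of the $z$-plane. Pulling back by $Z$, the $x$-plane half-axes $[0,+\mathrm{i}\infty)$ and $[0,-\mathrm{i}\infty)$ become two arcs of the imaginary $z$-axis cut at $\pm\mathrm{i}$ (which two arcs depends on the branch of the square root on the cut), and the oddness $f(1/z)=-f(z)$ recombines them into a \emph{single} contour integral from $z=0$ to $z=\mathrm{i}\infty$, i.e.\ into $\mp\mathcal{I}_0^0[f]$ by Lemma~\ref{IntL} --- not into a full-axis integral, which would indeed vanish. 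This quantity is generically nonzero. Concretely, for $f=\xi_0^0$ the left-hand side of Corollary~\ref{LEPTH} is
\begin{gather*}
\sum_{a=\pm 1}\Res_{z=a}\frac{\xi_0^0(z)\,\ln z}{x_1-x(z)}=\frac{x_1}{x_1^2-4}=\frac{1}{x_1}+\frac{4}{x_1^3}+\cdots,
\end{gather*}
whose $x_1^{-1}$ coefficient equals $1=\mathcal{I}_0^0\big[\xi_0^0\big]$ (Proposition~\ref{PROPJIU}), while your right-hand side --- and the right-hand side as printed in the corollary --- is $O\big(x_1^{-2}\big)$. What your computation should reveal is that the statement itself is off by one term: the sum must run over $b\geq -1$, the $b=-1$ summand being $\mathcal{I}_0^0[f]/x_1$ (with the conventions $H_0=0$, $\mathcal{I}_{-1}^1=0$). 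This is precisely how the paper later uses the result: the value $\mathcal{I}_0^0\big[\xi_0^0\big]=1$ in the proof of Proposition~\ref{PROPJIU} is extracted from that $x_1^{-1}$ coefficient (the $m=0$ case of \eqref{LANI}), and the paper's intermediate formulas \eqref{Hilb1}--\eqref{Hilb2} explicitly retain the corresponding $a=-1$ summand. So you should replace the claimed cancellation by the computation of this leading coefficient; everything you do at the orders $x_1^{-(b+2)}$, $b\geq 0$, is correct and agrees with the paper. A further small slip: $c_0=0$ follows from $f$ having no pole at $z=\infty$ (so that $f=O\big(\dd x/x^2\big)$ there), not from oddness, which by itself would allow a simple pole at both points above $x=\infty$.
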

\begin{proof}
Let us denote momentarily
\begin{gather*}
 \tilde{f}_{b} = (b + 1)!\,\mathcal{I}_{b}^1[f],\qquad \tilde{r}_{b + 1}(u) = \check{\mathcal{R}}_{b + 1}[f](u),
\end{gather*}
with the convention $\tilde{f}_{-1} = 0$. When we apply Lemma~\ref{HilbertA} with $F(u) = \tilde{F}({\rm i}u)$ for which
\begin{gather*}
f_{b} = (-{\rm i})^{b + 1}\,\tilde{f}_{b - 1},\qquad r_{b + 1}(u) = \tilde{r}_{b + 1}({\rm i}u),\qquad \eta = 1/2,
\end{gather*}
we find
\begin{gather}
\mathcal{H}\big[\tilde{F}({\rm i}\cdot)\big]({\rm i}x_1) + O\big(|x_1|^{-(b + 5/2)}\big)
= \left(\sum_{a = 0}^{b + 1} (-1)^{a}\,(-{\rm i})^{2(a + 1)}\,\frac{\tilde{f}_{a - 1}}{x_1^{a + 1}} \right)\ln({\rm i}x_1) \nonumber\\
 \quad {} + \sum_{a = 0}^{b} \frac{(-1)^{a}(-{\rm i})^{2(a + 1)}}{x_1^{a + 1}}\left\{-H_{a} \tilde{f}_{a - 1} + \lim_{\epsilon \rightarrow 0^+} \left(-\tilde{f}_{a - 1}\ln \epsilon + \int_{\epsilon}^{+\infty} {\rm i}^{a + 1}\,(u - \epsilon)^{a} \tilde{r}_{a}({\rm i}u)\,\dd u\right)\right\} \nonumber\\
 = -\left(\sum_{a = -1}^{b} \frac{\tilde{f}_{a}}{x_{1}^{a + 2}}\right)\ln({\rm i}x_1) \nonumber\\
 \quad {}- \sum_{a = 0}^{b} \frac{1}{x_1^{a + 2}}\left\{-H_{a + 1}\tilde{f}_{a} + \lim_{\epsilon \rightarrow 0^+}\left(-\tilde{f}_{a}\ln \epsilon + \int_{{\rm i}\epsilon}^{+{\rm i}\infty} (x - {\rm i}\epsilon)^{a + 1} \tilde{r}_{a + 1}(x)\,\dd x\right)\right\}.\label{Hilb1}
\end{gather}
In principle, the two sums should start from $a = -1$, but as $\tilde{f}_{-1} = 0$ the first one effectively starts at $a = 0$. In the second one, the $a = -1$ summand only contains the regularised integral. When we apply Lemma~\ref{HilbertA} for $F(u) = \tilde{F}(-{\rm i}u)$, for which
\begin{gather*}
f_{a} = {\rm i}^{a + 1}\,\tilde{f}_{a - 1},\qquad r_{b + 1}(u) = \tilde{r}_{b + 1}(-{\rm i}u),
\end{gather*}
we find
\begin{gather}
\mathcal{H}\big[\tilde{F}(-{\rm i}\cdot)\big](-{\rm i}x_1) + O\big(|x_1|^{-(b + 5/2)}\big)
 = \left(\sum_{a = 0}^{b + 1} (-1)^{a}{\rm i}^{2(a + 1)} \frac{\tilde{f}_{a - 1}}{x_1^{a + 1}}\right)\ln(-{\rm i}x_1) \nonumber\\
 \quad {}+ \sum_{a = 0}^{b} \frac{(-1)^{a}{\rm i}^{2(a + 1)}}{x_1^{a + 1}}\left\{-H_{a}\tilde{f}_{a - 1} + \lim_{\epsilon \rightarrow 0^+} \left(-\tilde{f}_{a - 1}\ln \epsilon + \int_{\epsilon}^{+\infty}\! (-{\rm i})^{a + 1}(u - \epsilon)^{a} \tilde{r}_{a}(-{\rm i}u)\,\dd u\right)\right\} \nonumber\\
 = - \left(\sum_{a = 0}^{b} \frac{\tilde{f}_{a}}{x_1^{a + 2}}\right)\ln(-{\rm i}x_1) \nonumber\\
 \quad {}- \sum_{a = -1}^{b} \frac{1}{x_1^{a + 2}}\left\{-H_{a + 1}\tilde{f}_{a} + \lim_{\epsilon \rightarrow 0^+} \left(-\tilde{f}_{a}\ln\epsilon + \int_{-{\rm i}\epsilon}^{-{\rm i}\infty} (x + {\rm i}\epsilon)^{a + 1}\,\tilde{r}_{a+ 1}(x)\,\dd x\right)\right\}.\label{Hilb2}
\end{gather}
We multiply \eqref{Hilb1} and \eqref{Hilb2} by $2$ and sum them, in view of obtaining the asymptotic expansion of \eqref{Agngn}. We observe that the logarithm term in \eqref{Hilb1}--\eqref{Hilb2} contributes to
\begin{gather*}
-2\left(\sum_{a= 0}^{b}\frac{\tilde{f}_{a}}{x_1^{a + 2}}\right)\ln x_1,
\end{gather*}
and therefore cancels the second term in \eqref{Agngn}. The final result is
\begin{gather*}
 \mathcal{A}[f](x_1) = -\frac{2f(z_1)}{\dd x_1} \ln\big(x(z_1)/z_1\big) + O\big(|x_1|^{-(b + 5/2)}\big) + \sum_{a = 0}^b \frac{1}{x_1^{a + 2}}\bigg\{2H_{a + 1}\tilde{f}_{a} \\
\hphantom{\mathcal{A}[f](x_1) =}{} + \lim_{\epsilon \rightarrow 0^+}\left(2\tilde{f}_{a}\ln \epsilon - 2\int_{0}^{{\rm i}\infty} x^{a + 1}\tilde{r}_{a + 1}(x + {\rm i}\epsilon)\,\dd x - \int_{0}^{-{\rm i}\infty} x^{a + 1}\,\tilde{r}_{a + 1}(x - {\rm i}\epsilon)\,\dd x\right)\bigg\} \\
\hphantom{\mathcal{A}[f](x_1)}{} = -\frac{2f(z_1)}{\dd x_1} \ln\big(x(z_1)/z_1\big) + \sum_{a = 0}^b \frac{(a + 1)!}{x_1^{a + 2}}\big(2H_{a + 1}\mathcal{I}_{a}^1 + \mathcal{I}_{a + 1}^0\big)[f]
\end{gather*}
in terms of the functional $\mathcal{I}^0$ introduced in \eqref{I^0}.
\end{proof}

\subsection[Decomposition of $\protect{\mathcal{L}[f]}$]{Decomposition of $\boldsymbol{\mathcal{L}[f]}$}

Recall the basis $\xi_m^{\alpha}$ defined in~\eqref{auxdif}. The following lemma gives a decomposition of $\cl\big(\xi_m^\alpha\big)(z_1,z_2)$, defined in~\eqref{Ldefdede}. which in particular implies its polar behaviour in $z_2$. It will be applied in Section~\ref{S7}.

\begin{lemma} For any $m \geq 0$ and $\alpha \in \{0,1\}$, we have \label{Ltransform}
\begin{gather*}
\cl\big(\xi_m^\alpha\big)(z_1,z_2)=\sum_{\substack{\beta = 0,1 \\ \ell \geq 0}} c^{\alpha,\beta}_{m,\ell}(x_1)\xi_{\ell}^\beta(z_2),
\end{gather*}
where $\xi_{\ell}^\beta(z_2)$ is defined in \eqref{auxdif} and $c^{\alpha,\beta}_{m,\ell}(x_1)=\big(x_1^2-4\big)^{-N} P^{\alpha,\beta}_{m,\ell,N}(x_1)\dd x_1$ for some $N\in\bn$ and $P^{\alpha,\beta}_{m,\ell,N}(x_1)$ is a polynomial of degree at most $2N-2$.
\end{lemma}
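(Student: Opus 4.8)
The plan is to turn $\cl\big(\xi_m^\alpha\big)$ into an explicit meromorphic $1$-form in $z_2$, recognise it as an element of the space $V$ of $1$-forms on $\mathcal{S}$ that are odd under $z\mapsto 1/z$ and holomorphic away from $z=\pm1$, and then read off its coefficients against the basis $\big\{\xi_\ell^\beta\big\}$ of $V$. First I would carry out the differentiation $\dd_2$ in the second term of \eqref{Ldefdede}: writing $g_m^\alpha=\xi_m^\alpha/\dd x$ and using $\dd\big(\xi_m^\alpha/\dd x\big)=-\xi_{m+1}^\alpha$ from \eqref{auxdif} together with $g_m^\alpha\,\dd x=\xi_m^\alpha$, one gets
\begin{gather*}
\cl\big(\xi_m^\alpha\big)(z_1,z_2)=2\,\omega_{0,2}^{\rm odd}(z_1,z_2)\,\frac{\xi_m^\alpha(z_1)}{\dd x_1}+\frac{\dd x_1\,\xi_{m+1}^\alpha(z_2)}{x_1-x_2}-\frac{\dd x_1\,\xi_m^\alpha(z_2)}{(x_1-x_2)^2}.
\end{gather*}
Since $\xi_k^\alpha$ is odd and $\dd x$, $x$ are invariant under $z\mapsto 1/z$, every summand is even in $z_1$ and odd in $z_2$; hence the sought coefficients are even $1$-forms in $z_1$, that is, of the form $\rho(x_1)\,\dd x_1$, and the output is automatically odd in $z_2$.

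The first key point is that, as a $1$-form in $z_2$, the right-hand side is holomorphic except at $z_2=\pm1$. The only competing singularities are at $z_2=z_1$ and $z_2=1/z_1$, where the first summand (through $\omega_{0,2}^{\rm odd}$) and the last summand (through $(x_1-x_2)^{-2}$) each carry a double pole. A short local expansion, using $\dd x_1\,\dd x_2/(x_1-x_2)^2\sim \dd z_1\,\dd z_2/(z_1-z_2)^2$ and $\xi_m^\alpha(1/z)=-\xi_m^\alpha(z)$, shows these double poles cancel at both points; moreover the whole expression has no residue in $z_2$, because $\omega_{0,2}^{\rm odd}(z_1,\cdot)$ is a pure double-pole kernel and the last two summands together are $\dd_2$-exact. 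Vanishing of the double pole together with vanishing of the residue forces regularity at $z_2=z_1,1/z_1$; regularity at $z_2=0,\infty$ is immediate since the $\xi_k^\alpha$ are regular there and $(x_1-x_2)^{-1}$ vanishes at $z_2=\infty$. Thus $\cl\big(\xi_m^\alpha\big)(z_1,\cdot)\in V$. Because an odd $1$-form can have only even-order poles at the fixed points $\pm1$, a dimension count shows that $\big\{\xi_\ell^\beta\big\}_{\ell\ge 0,\ \beta\in\{0,1\}}$ is a basis of $V$, and since the pole order of $\cl\big(\xi_m^\alpha\big)$ at $z_2=\pm1$ is that of $\xi_{m+1}^\alpha$, the expansion runs over $0\le\ell\le m+1$.

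It then remains to determine the analytic shape of the coefficients $c^{\alpha,\beta}_{m,\ell}(x_1)=\rho^{\alpha,\beta}_{m,\ell}(x_1)\,\dd x_1$. The decisive observation is that, because the poles at $z_2=z_1,1/z_1$ (equivalently the pole at $x_1=x_2$) have just been shown to cancel, $\cl\big(\xi_m^\alpha\big)$ is, for fixed generic $z_2$, a rational function of $x_1$ whose only poles are at $x_1=\pm2$ (coming from $\xi_m^\alpha(z_1)/\dd x_1$, i.e.\ $z_1=\pm1$). As the $\xi_\ell^\beta(z_2)$ are linearly independent in $z_2$, each $\rho^{\alpha,\beta}_{m,\ell}$ inherits this property, giving the form $\big(x_1^2-4\big)^{-N}P^{\alpha,\beta}_{m,\ell,N}(x_1)$ with $N$ the order of the pole at $x_1=\pm2$.

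The degree bound on $P^{\alpha,\beta}_{m,\ell,N}$ is equivalent to a decay estimate for $\rho^{\alpha,\beta}_{m,\ell}(x_1)$ as $x_1\to\infty$, which I would extract from the large-$x_1$ asymptotics of the three summands; this is the main obstacle. The naive contribution of the $(x_1-x_2)^{-1}$ term decays only like $x_1^{-1}$, and the required order of vanishing is reached only after combining the principal parts at $z_2=+1$ and $z_2=-1$, which are tied together through the parity $\xi_k^\alpha(-z)=(-1)^{k+\alpha}\xi_k^\alpha(z)$ under $z\mapsto-z$. Splitting the identity according to its parity under $(z_1,z_2)\mapsto(-z_1,-z_2)$ — under which the first and third summands transform oppositely to the second — organises this cancellation and pins down the asymptotics of each $\rho^{\alpha,\beta}_{m,\ell}$, which is precisely what controls $\deg P^{\alpha,\beta}_{m,\ell,N}$.
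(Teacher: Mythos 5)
Up to the point where you determine the coefficients, your argument is correct and is essentially the paper's own proof: your expansion
\begin{gather*}
\cl\big(\xi_m^\alpha\big)(z_1,z_2)=2\,\omega_{0,2}^{{\rm odd}}(z_1,z_2)\,\frac{\xi_m^\alpha(z_1)}{\dd x_1}+\frac{\dd x_1\,\xi_{m+1}^\alpha(z_2)}{x_1-x_2}-\frac{\dd x_1\,\xi_m^\alpha(z_2)}{(x_1-x_2)^2},
\end{gather*}
the parity statements, the cancellation of the double poles at $z_2=z_1,1/z_1$ together with the vanishing of the residues there (the $\omega_{0,2}^{{\rm odd}}$ term is a pure double pole, and the last two summands are $\dd_2$-exact), and the spanning property of the $\xi_\ell^\beta$ are exactly the paper's properties $(i)$--$(v)$ and their verification.

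The final step, the degree bound, contains a genuine error. You claim that under $(z_1,z_2)\mapsto(-z_1,-z_2)$ the first and third summands transform oppositely to the second. They do not: using $x(-z)=-x(z)$, $\dd x\mapsto -\dd x$, the invariance of $\omega_{0,2}^{{\rm odd}}$, and $\xi_k^\alpha(-z)=(-1)^{k+\alpha}\xi_k^\alpha(z)$, all three summands above acquire the same sign $(-1)^{m+\alpha+1}$ --- as they must, since $\cl$ applied to a form of definite parity has definite parity. So this splitting produces no cancellation. Worse, no argument can produce the cancellation you want, because the asserted decay is false. The first and third summands are regular at $z_1=\infty$ (the first because $\omega_{0,2}^{{\rm odd}}(\cdot,z_2)$ is regular there and $\xi_m^\alpha/\dd x_1=O\big(x_1^{-2}\big)$, the third because $\dd x_1/(x_1-x_2)^2$ is regular at $x_1=\infty$), while the middle one is not:
\begin{gather*}
\Res_{z_1=\infty}\cl\big(\xi_m^\alpha\big)(z_1,z_2)=\xi_{m+1}^\alpha(z_2)\,\Res_{z_1=\infty}\frac{\dd x_1}{x_1-x_2}=-\xi_{m+1}^\alpha(z_2)\neq 0.
\end{gather*}
By linear independence of the $\xi_\ell^\beta(z_2)$, the coefficient $c^{\alpha,\alpha}_{m,m+1}(x_1)$ must therefore carry a simple pole at $x_1=\infty$, i.e., $\deg P^{\alpha,\alpha}_{m,m+1,N}=2N-1$. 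So the bound $\deg P\le 2N-2$ fails for exactly this coefficient, for every $m$ and $\alpha$; you correctly singled this decay out as ``the main obstacle'', but it is not an obstacle to be overcome --- it is a counterexample.

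For what it is worth, the paper's own proof has the same blind spot: its property $(iii)$ asserts that $\cl\big(\xi_m^\alpha\big)(z_1,z_2)$ is meromorphic in $z_1$ with poles only at $z_1=\pm1$, overlooking the simple poles at $z_1=0,\infty$ carried by $\dd x_1/(x_1-x_2)$. What is true, and what your argument establishes once this is taken into account, is the decomposition with coefficients $\big(x_1^2-4\big)^{-N}P(x_1)\,\dd x_1$ and $\deg P\le 2N-1$, the top degree occurring only in the coefficient of $\xi_{m+1}^\alpha(z_2)$. This weaker version is all that the applications in Section~\ref{S7} require: there $\ci^1_k$ is a residue at $x_1=\infty$, and $\ci^0_b$ is by construction extended to tolerate singularities at $\infty$, so the commutation and evaluation arguments are unaffected.
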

\begin{proof}The differentials $c^{\alpha,\beta}_{m,\ell}(x_1)$ and $\xi_{\ell}^\beta(z_2)$ form bases of the space of meromorphic differentials satisfying the following properties:
\begin{itemize}\itemsep=0pt
\item[$(i)$] $\cl\big(\xi_m^\alpha\big)(z_1,1/z_2)=-\cl\big(\xi_m^\alpha\big)(z_1,z_2)$;
\item[$(ii)$] $\cl\big(\xi_m^\alpha\big)(1/z_1,z_2)=\cl\big(\xi_m^\alpha\big)(z_1,z_2)$;
\item[$(iii)$] $\cl\big(\xi_m^\alpha\big)(z_1,z_2)$ is meromorphic in $z_1$ with poles only at $z_1=\pm 1$;
\item[$(iv)$] $\cl\big(\xi_m^\alpha\big)(z_1,z_2)$ is meromorphic in $z_2$ with poles only at $z_2=\pm 1$;
\item[$(v)$] for any $a \in \{-1,1\}$, $\Res_{z_2= a} \cl\big(\xi_m^\alpha\big)(z_1,z_2)=0$.
\end{itemize}
Hence it is enough to prove that $\cl\big(\xi_m^\alpha\big)(z_1,z_2)$ satisfies these properties.

$(i)$ follows from $\xi_m^\alpha(1/z_2)=-\xi_m^\alpha(z_2)$, the symmetry $x_2=x(z_2)=x(1/z_2)$, and the oddness of $\omega_{0,2}^{{\rm odd}}(z_1,z_2)$ under $z_2\mapsto1/z_2$. Property $(ii)$ follows in a similar way once we use $\omega_{0,2}(z_1,1/z_2)=\omega_{0,2}(1/z_1,z_2)$ and the oddness of $\xi_m^\alpha(z_1)$ to get a symmetric bidifferential.

For $(iii)$, clearly $\cl\big(\xi_m^\alpha\big)(z_1,z_2)$ is meromorphic in $z_1$ with poles at $z_1=1,-1,z_2,1/z_2$ so we need to show that the poles at $z_1=z_2$ and $z_1=1/z_2$ are removable. In fact, we only need show that the pole at $z_1=z_2$ is removable and $(i)$ will imply the same at $z_1=1/z_2$. The pole on the diagonal $z_1=z_2$ has order 2, so consider
\begin{align*}
\lim_{z_1\to z_2} \frac{(z_1-z_2)^2}{\dd z_1\,\dd z_2}\cl\big(\xi_m^\alpha\big)(z_1,z_2) & = \lim_{z_1\to z_2} \frac{(z_1-z_2)^2}{\dd z_1\,\dd z_2}\!\left\{\omega_{0,2}(z_1,z_2)\frac{\xi_m^\alpha(z_1)}{\dd x_1} - \frac{\xi_m^\alpha(z_2)}{\dd x_2}\frac{\dd x_1\dd x_2}{(x_1-x_2)^2} \right\}\\
 & = \frac{\xi_m^\alpha(z_2)}{\dd x_2} - \frac{\xi_m^\alpha(z_2)}{\dd x_2}\frac{\dd x_2^2}{\dd z_2^2\,x'(z_2)^2} \\
 & = 0.
\end{align*}
where the first equality removed those terms of $\cl\big(\xi_m^\alpha\big)(z_1,z_2)$ with a simple pole (and possibly holomorphic) at $z_1=z_2$. Hence the pole at $z_1=z_2$ is at most simple and we shall compute its residue. The residue of $\cl\big(\xi_m^\alpha\big)(z_1,z_2)$ at $z_2=z_1$ gives the same residue and is simpler to calculate. In fact it is immediately 0 because $\Res\limits_{z_2=z_1}\omega_{0,2}(z_1,z_2)=0$ and $\omega_{0,2}(z_1,1/z_2)$ has no pole at $z_2=z_1$, and the final term in $\cl\big(\xi_m^\alpha\big)(z_1,z_2)$ is exact in $z_2$ so all residues vanish. Hence the pole is removable at $z_1=z_2$. This discussion also implies~$(iv)$.

Finally, property $(v)$ follows from property $(i)$ and the fact that $z_1=\pm 1$ are the fixed points of the involution $z_1\mapsto 1/z_1$.
\end{proof}

The main application of Lemma~\ref{Ltransform} is to show that the operators $\ci^{\alpha_i}_{b_i}$ commute on \eqref{diffdecay} as explained in the proof of Theorem~\ref{th:vir} below. Lemma~\ref{Ltransform} also shows us that evaluation of $\bigotimes_{i=1}^n\ci^{\alpha_i}_{b_i}$ on \eqref{diffdecay} depends only on the values of the regularised integral $\ci^0_j$ applied to the differentials $\xi_m^\alpha$ which are determined by $\ci^0_j\big(\xi_m^\alpha\big)=\ci^0_{j-m}(\xi_0^\alpha)$ and the table of Proposition~\ref{PROPJIU}. This is because the right-hand side of \eqref{diffdecay} is a linear combination of the differentials $\xi_m^\alpha(z_j)$ (with coefficients given by differentials in the other variables), i.e., it has poles only at $z_j=\pm1$ for $j \in \{2,\ldots,n\}$~-- the other poles are removable~-- and is odd under $z_j\mapsto 1/z_j$.

\section{Properties of regularised contour integrals}\label{regint}

If $f$ is a meromorphic $1$-form on $\mathcal{S}$ without poles at $z = 0$ and $\infty$, we can define
\begin{gather*}
\mathcal{R}_{b}[f] := f - \sum_{a = 0}^{b - 1} \frac{f_{a}}{x^{a + 2}},\qquad f_a = -\Res_{z = \infty} x^{a + 1}\,f,
\end{gather*}
and $\check{\mathcal{R}}_{b}[f](x) = \mathcal{R}_{b}[f](Z(x))$. Then, if $f$ has no pole for $z \in {\rm i}\mathbb{R}$, we can define for $b \geq 0$
\begin{gather}
\label{Iooooo} \mathcal{I}_{b}^0[f] := \lim_{\epsilon \rightarrow 0^+} \left(2\mathcal{I}_{b - 1}^{1}[f]\,\ln \epsilon - \int_{0}^{{\rm i}\infty} \frac{x^{b}}{b!} \check{\mathcal{R}}_{b}[f](x + {\rm i}\epsilon)\,\dd x - \int_{0}^{-{\rm i}\infty} \frac{x^{b}}{b!} \check{\mathcal{R}}_{b}[f](x - {\rm i}\epsilon)\,\dd x\right).\!\!\!
\end{gather}

\subsection{Basic properties and extended definition} \label{propint}

We justify that the regularised integral is actually an integral when applied to $1$-forms that are odd with respect to the involution and that do not need regularisation.
\begin{lemma}
\label{IntL} If $f$ is a meromorphic $1$-form on $\mathcal{S}$ with no poles at $z = \pm {\rm i}$, without residues, such that $f(z) + f(1/z) = 0$ and $\mathcal{I}_{a}^1[f] = 0$ for all $a \in \{0,\ldots,b - 1\}$, then
\begin{gather*}
\mathcal{I}_{b}^0[f] = -\int_{\gamma} \frac{x^{b}}{b!}\,f,
\end{gather*}
where $\gamma$ is the contour from $z = 0$ to $z = {\rm i}\infty$.
\end{lemma}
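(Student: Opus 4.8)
The plan is to unfold the definition \eqref{Iooooo} and show that, under the stated hypotheses, all of its regularising ingredients disappear, leaving an honest contour integral which a change of variables then identifies with $-\int_{\gamma}\frac{x^{b}}{b!}f$. First I would exploit the vanishing hypotheses. Since $\mathcal{I}_{a}^{1}[f] = -\Res_{\infty}\frac{x^{a+1}}{(a+1)!}f = 0$ for every $a\in\{0,\dots,b-1\}$, the subtracted tail in $\mathcal{R}_{b}[f] = f - \sum_{a=0}^{b-1}(a+1)!\,\mathcal{I}_{a}^{1}[f]\,\dd x/x^{a+2}$ vanishes identically, so $\mathcal{R}_{b}[f] = f$ and hence $\check{\mathcal{R}}_{b}[f] = f\circ Z =: \check{f}$. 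The coefficient $\mathcal{I}_{b-1}^{1}[f]$ of $\ln\epsilon$ in \eqref{Iooooo} also vanishes (with $\mathcal{I}_{-1}^{1}=0$ when $b=0$). Thus $\mathcal{I}_{b}^{0}[f]=\lim_{\epsilon\to0^{+}}\big({-}\int_{0}^{i\infty}\frac{x^{b}}{b!}\check{f}(x+i\epsilon) - \int_{0}^{-i\infty}\frac{x^{b}}{b!}\check{f}(x-i\epsilon)\big)$. The same vanishing guarantees convergence: as $f$ has no pole at $z=0,\infty$ and its first $b$ Laurent coefficients at $x=\infty$ are killed, $f=O(x^{-b-2}\dd x)$ near the two points over $z=\infty$, so $\frac{x^{b}}{b!}f=O(x^{-2}\dd x)$ is integrable at the endpoints of $\gamma$; together with regularity of $f$ at $z=\pm i$ (which lie over the turning point $x=0$) this lets me pass the limit through and replace $\mathcal{I}_{b}^{0}[f]$ by the two genuine integrals along the imaginary $x$-axes.

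Next I would perform the change of variables. The contour $\gamma$ from $z=0$ to $z=i\infty$ meets the unit circle at $z=i$; write $\gamma=\gamma_{<}\cup\gamma_{>}$, with $\gamma_{<}$ the part $|z|<1$ (from $z=0$ to $z=i$) and $\gamma_{>}$ the part $|z|>1$ (from $z=i$ to $z=i\infty$). Under $x=z+1/z$, $\gamma_{<}$ maps bijectively onto the negative imaginary $x$-axis and $\gamma_{>}$ onto the positive imaginary $x$-axis, the two joining at $x=0$ (the image of $z=i$). On $\gamma_{<}$ one has $z=Z(x)$, so the substitution is direct; on $\gamma_{>}$ one has instead $z=1/Z(x)=\sigma(Z(x))$, where $\sigma(z)=1/z$ is the involution, and here I invoke the oddness $f(z)+f(1/z)=0$ to rewrite the pullback of $f$ as $-\check{f}$. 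Matching orientations, the integral over $\gamma_{<}$ becomes the negative-axis integral and the integral over $\gamma_{>}$ becomes the positive-axis integral, so that $-\int_{\gamma}\frac{x^{b}}{b!}f$ reassembles precisely into the two-term expression for $\mathcal{I}_{b}^{0}[f]$ from the first step. The hypothesis that $f$ has no residues is what licenses these substitutions and the splitting of $\gamma$ with no residue contributions.

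The main obstacle, and the step demanding the most care, is the bookkeeping of the branch of $Z$ together with the $\pm i\epsilon$ prescription. The function $\check{f}=f\circ Z$ is discontinuous across $i\mathbb{R}$ (there $x^{2}-4\in\mathbb{R}_{<0}$, on the cut of the square root in $Z$), and the two sides of the cut correspond to the two roots $Z$ and $1/Z=\sigma(Z)$ of $x=z+1/z$; since these differ by the involution, choosing the wrong side flips the sign of $\check{f}$. I would therefore verify explicitly, near $x=0$ and near $x=\pm i\infty$, that the side singled out by $x+i\epsilon$ on the positive axis and by $x-i\epsilon$ on the negative axis is exactly the one making $z(x)\in\gamma_{>}$ and $z(x)\in\gamma_{<}$ respectively, and that the signs accumulated from this branch choice and from the oddness substitution on $\gamma_{>}$ combine to produce the overall minus sign in $-\int_{\gamma}\frac{x^{b}}{b!}f$ rather than its opposite. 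Everything else is routine once this branch-and-orientation matching is settled.
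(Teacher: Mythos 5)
Your strategy is the same as the paper's: use $\mathcal{I}^1_a[f]=0$ for $a<b$ to kill both the subtracted tail in $\mathcal{R}_b[f]$ and the $\ln\epsilon$ counterterm, pull the two integrals over the imaginary $x$-rays back to the $z$-plane, split $\gamma$ at $z=i$, and recombine using oddness (your first paragraph, on convergence and path-independence, is fine). The gap is in the branch bookkeeping, which you correctly single out as the crux but then resolve incorrectly. First, the $\pm {\rm i}\epsilon$ shifts in \eqref{I^0} cannot ``single out a side'' of anything: for $x\in(0,{\rm i}\infty)$ the point $x+{\rm i}\epsilon$ still lies on ${\rm i}\mathbb{R}_+$, i.e., the shift is \emph{parallel} to the ray, exactly on the locus where $Z$ jumps (there $x^2-4\in\mathbb{R}_{<0}$). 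Its only role is to regularise the endpoint $x=0$, where $\check{\mathcal{R}}_b[f]$ can have a pole of order $b+1$; which of the two preimages $Z$ denotes on the rays must be fixed by convention. Second, the convention you commit to in your second paragraph (inside-the-unit-disk preimages on both rays, so that $\gamma_<$ is ``direct'' and $\gamma_>$ needs the involution) produces the \emph{wrong sign}: under $Z$ the ray $(0,{\rm i}\infty)$ is then traced by $z\in(-{\rm i},0)$ and the ray $(0,-{\rm i}\infty)$ by $z\in({\rm i},0)$, so your substitutions give
\begin{gather*}
\mathcal{I}_b^0[f]\;=\;-\int_{-{\rm i}}^{0}\frac{x(z)^b}{b!}\,f(z)\;-\;\int_{{\rm i}}^{0}\frac{x(z)^b}{b!}\,f(z)\;=\;+\int_{\gamma}\frac{x^b}{b!}\,f,
\end{gather*}
where the second equality uses $z\mapsto 1/z$ and oddness on the first term. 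A concrete check: take $b=0$ and $f=\xi_0^0=\frac{2z\,\dd z}{(z^2-1)^2}=\dd\big({-}\tfrac{1}{z^2-1}\big)$, which satisfies all hypotheses and needs no regularisation; with inside-disk values of $Z$ each of the two integrals in \eqref{I^0} equals $\tfrac12$, giving $\mathcal{I}_0^0\big[\xi_0^0\big]=-1$, whereas the lemma and Proposition~\ref{PROPJIU} (entry $m=0$ of the first column) give $+1$.

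The consistent choice --- the one the paper's proof uses implicitly --- is the opposite one: on both rays $Z$ takes the \emph{outside}-disk values (the limit from $\operatorname{Re}x<0$), so that ${\rm i}\mathbb{R}_+$ pulls back to the $z$-contour from ${\rm i}$ to ${\rm i}\infty$ and ${\rm i}\mathbb{R}_-$ to the contour from $-{\rm i}$ to $-{\rm i}\infty$. This yields
\begin{gather*}
\mathcal{I}_{b}^0[f] = -\int_{{\rm i}}^{{\rm i}\infty} \frac{x(z)^{b}}{b!}\, f(z) - \int_{-{\rm i}}^{-{\rm i}\infty} \frac{x(z)^{b}}{b!}\, f(z),
\end{gather*}
and a single application of $z\mapsto 1/z$ with oddness turns the second term into $-\int_{0}^{{\rm i}}\frac{x^b}{b!}f$, so the two pieces assemble into $-\int_0^{{\rm i}\infty}\frac{x^b}{b!}f$ with the stated sign. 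Note also that your third paragraph proposes to verify yet another assignment (positive ray $\mapsto\gamma_>$, negative ray $\mapsto\gamma_<$), which is inconsistent both with your second paragraph and with the correct one; under that mixed assignment no oddness would be needed and the pieces would combine to $-\int_{{\rm i}}^{{\rm i}\infty}+\int_{0}^{{\rm i}}$, which is not $\pm\int_\gamma$ at all. So, as written, the proposal either proves the statement with the opposite sign or collapses into an inconsistency; repairing it requires identifying the outside-disk branch as the one entering \eqref{I^0}, which is forced by consistency with the paper's explicit evaluations.
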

\begin{proof}
The conditions on $f$ imply that the integral in the right-hand side is well-defined, and does not depend on the choice of contour from $0$ to ${\rm i}\infty$. It also shows that
\begin{gather*}
\mathcal{I}_{b}^0[f] = -\int_{{\rm i}}^{{\rm i}\infty} \frac{x(z)^{b}}{b!} f(z) - \int_{-{\rm i}}^{-{\rm i}\infty} \frac{x(z)^{b}}{b!} f(z),
\end{gather*}
where the contour from ${\rm i}$ to ${\rm i}\infty$ is avoiding the (finitely many) poles of $f$, and the result does not depend on such a choice of contour.
We transform the second integral using the change of variable $z \mapsto 1/z$ and the oddness of $f$ with respect to this involution
\begin{gather*}
\mathcal{I}_{b}^0[f] = -\int_{{\rm i}}^{{\rm i}\infty} \frac{x(z)^{b}}{b!} f(z) - \int_{0}^{{\rm i}} \frac{x(z)^{b}}{b!} f(z) = -\int_{0}^{{\rm i}\infty} \frac{x(z)^{b}}{b!} f(z).\tag*{\qed}
\end{gather*}\renewcommand{\qed}{}
\end{proof}

Notice that if $f$ is a meromorphic $1$-form on $\mathcal{S}$, then integration by parts yields
\begin{gather}
\label{IPP222} \mathcal{I}_{b}^1[\dd(f/\dd x)] = -\mathcal{I}_{b - 1}^1[f].
\end{gather}
We now prove a similar property for the regularised integral.
\begin{lemma}
\label{IPP} If $f$ is a meromorphic $1$-form on $\bp^1$ with poles away from $z = \pm {\rm i},0,\infty$, then
\begin{gather*}
\forall\, b \geq 1,\qquad \mathcal{I}_{b}^0[\dd(f/\dd x)] = -\mathcal{I}_{b - 1}^0[f].
\end{gather*}
\end{lemma}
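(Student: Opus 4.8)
The plan is to prove the identity directly from the definition \eqref{Iooooo}, performing the integration by parts inside the $\epsilon$-regularised contour integrals. Write $F = f/\dd x$ and $g = \dd F$, and let $\check F(x) = F(Z(x))$, so that the pullback of $g$ to the $x$-plane is $\dd\check F$. The heuristic behind the statement is the exact-form identity
\[
\frac{x^{b}}{b!}\,\dd F = \dd\!\left(\frac{x^{b}}{b!}\,F\right) - \frac{x^{b-1}}{(b-1)!}\,f,
\]
which classically integrates to $\mathcal{I}_{b}^{0}[\dd F] = -\mathcal{I}_{b-1}^{0}[f]$ up to a boundary contribution; the entire content of the lemma is that the regularisation annihilates this boundary term. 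The first ingredient I would record is the relation between the polar data of $g$ and of $f$. Since $f$ has no pole at $\infty$ we have $f_{-1} = 0$, and the ordinary integration by parts \eqref{IPP222} gives $\mathcal{I}_{b-1}^{1}[g] = -\mathcal{I}_{b-2}^{1}[f]$; reading this off at every order yields $g_{a} = -(a+1)f_{a-1}$ for the coefficients $g_{a} = -\Res_{z=\infty}x^{a+1}g$. This lets me rewrite $\check{\mathcal{R}}_{b}[g]$ as $\dd\check F$ minus explicit rational one-forms, and replace $\check F\,\dd x = Z^{*}f$ by $\check{\mathcal{R}}_{b-1}[f] + \sum_{a=0}^{b-2}f_{a}\,x^{-(a+2)}\dd x$.

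Substituting into \eqref{Iooooo}, I would integrate by parts in $x$ in each of the two finite-$\epsilon$ integrals $\int_{0}^{\pm{\rm i}\infty}\frac{x^{b}}{b!}\check{\mathcal{R}}_{b}[g](x\pm{\rm i}\epsilon)$. The derivative term $\dd\check F$ produces a bulk piece $+\int_{0}^{\pm{\rm i}\infty}\frac{x^{b-1}}{(b-1)!}\check{\mathcal{R}}_{b-1}[f]$ — which is exactly minus the contour-integral part of $\mathcal{I}_{b-1}^{0}[f]$ — together with a boundary term $\frac{x^{b}}{b!}\check F$ evaluated at $x=\pm{\rm i}\epsilon$ and $x\to\pm{\rm i}\infty$, and a collection of elementary integrals $\int x^{k}\,\dd x$ of pure powers coming from the mismatch between the two polar subtractions. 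Collecting the bulk pieces and invoking the definition of $\mathcal{I}_{b-1}^{0}[f]$, the reconstruction regenerates a counterterm $+2\mathcal{I}_{b-2}^{1}[f]\ln\epsilon$; this is precisely cancelled by the explicit counterterm $2\mathcal{I}_{b-1}^{1}[g]\ln\epsilon = -2\mathcal{I}_{b-2}^{1}[f]\ln\epsilon$ of $\mathcal{I}_{b}^{0}[g]$, again by \eqref{IPP222}. After this cancellation one is left with $\mathcal{I}_{b}^{0}[g] = -\mathcal{I}_{b-1}^{0}[f] + \lim_{\epsilon\to0^{+}}(\text{boundary terms} + \text{power integrals})$, so everything reduces to showing that the last limit vanishes.

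The verification of this vanishing is the main obstacle, and it is genuinely delicate because the individual boundary and power contributions are themselves divergent. At the lower endpoint $x=\pm{\rm i}\epsilon$ the boundary term $\frac{x^{b}}{b!}\check F$ tends to $0$ for $b\geq1$, but the power integrals contain an $x^{-1}$ term producing a further $\ln\epsilon$; I would check that these residual logarithms cancel among the power integrals themselves once the two contours are added. At the upper endpoint $x\to\pm{\rm i}\infty$ both the boundary term and the power integrals carry genuine power-law divergences (for $b\geq2$) that cannot cancel within a single contour. The key structural input here is that the two integration banks $x\pm{\rm i}\epsilon$ lie on opposite sides of the branch cut of $Z$ along the imaginary axis, where the two determinations are exchanged by the involution $z\mapsto1/z$, namely $Z_{-}=1/Z_{+}$. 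Combining the $+{\rm i}\epsilon$ and $-{\rm i}\epsilon$ contributions and using this involution, together with the decay of $\check{\mathcal{R}}_{b-1}[f]$ at infinity, is what forces the divergent parts at infinity to cancel. The cleanest way to organise the whole argument is to keep $\epsilon$ strictly positive throughout, to group the two contours before taking any limit, and to send $\epsilon\to0^{+}$ only after all the $\ln\epsilon$ and power divergences have been matched; the separate appeals to \eqref{IPP222} (for the logarithms) and to the cut involution (for the behaviour at infinity) are the two places where the special structure of the spectral curve $x=z+1/z$ is used.
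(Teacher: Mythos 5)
Your proposal follows the same route as the paper's proof: unwind the definition \eqref{Iooooo} at fixed $\epsilon>0$, integrate by parts inside the two regularised integrals, use \eqref{IPP222} to match the $\ln\epsilon$ counterterms, and recognise the bulk term as $-\mathcal{I}_{b-1}^0[f]$. Your preparatory steps (the coefficient relation $g_a=-(a+1)f_{a-1}$, the counterterm cancellation) are correct. The gap is in your final paragraph, which is the only place where anything remains to be proven, and the claim made there is false: the divergences \emph{do} cancel within each single contour, at fixed $\epsilon$, before any limit is taken. Indeed, your ``power integrals'' have exact integrands,
\begin{gather*}
\frac{(a+2)\,x^{b}}{b!\,(x\pm{\rm i}\epsilon)^{a+3}}\,\dd x-\frac{x^{b-1}}{(b-1)!\,(x\pm{\rm i}\epsilon)^{a+2}}\,\dd x=\dd\left(-\frac{x^{b}}{b!\,(x\pm{\rm i}\epsilon)^{a+2}}\right),
\end{gather*}
so, summed against $f_a$ and added to the boundary term $\frac{x^b}{b!}\check F(x\pm{\rm i}\epsilon)$ produced by your integration by parts, they telescope on each bank separately into
\begin{gather*}
\left[\frac{x^{b}}{b!}\,\frac{\check{\mathcal{R}}_{b-1}[f](x\pm{\rm i}\epsilon)}{\dd x}\right]_{0}^{\pm{\rm i}\infty},
\end{gather*}
which vanishes at both ends: at $0$ because of the prefactor $x^{b}$ with $b\geq 1$ and $\epsilon>0$, and at $\infty$ because $\check{\mathcal{R}}_{b-1}[f]\in O\big(|x|^{-(b+1)}|\dd x|\big)$. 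There are no surviving power-law divergences, and no residual $\ln\epsilon$ either. This telescoping is exactly what the paper packages into the one-line identity $\check{\mathcal{R}}_{b}[\dd(f/\dd x)]=\partial_x\check{\mathcal{R}}_{b-1}[f]$, after which a single integration by parts finishes the proof in three lines; your decomposition into $\dd\check F$ plus counterterms only unpacks that identity and then misdiagnoses how the pieces recombine.

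The deeper problem is that the mechanism you propose in place of the within-bank cancellation cannot work. Lemma~\ref{IPP} assumes nothing about how $f$ transforms under $z\mapsto 1/z$: unlike Lemma~\ref{IntL} or Corollary~\ref{LEPTH}, there is no hypothesis $f(z)+f(1/z)=0$ here. Consequently the branch-cut involution $Z_-=1/Z_+$ gives no relation whatsoever between the values of $\check F$ on the $+{\rm i}\epsilon$ and $-{\rm i}\epsilon$ banks, and a power-law divergence that genuinely survived on one bank could never be cancelled by the other. So, as written, your final step does not close, and it could not be made to close along the route you indicate; the repair is the per-bank telescoping above (equivalently, the paper's remainder identity), not a two-bank symmetry argument.
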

\begin{proof} Let $\tilde{f} = \dd(f/\dd x)$. We observe that for any $b \geq 1$
\begin{gather*}
\mathcal{I}_{b}^1[\tilde{f}] = -\mathcal{I}_{b - 1}^1[f],\qquad \check{\mathcal{R}}_{b}[\tilde{f}](x) = \partial_{x}\,\check{\mathcal{R}}_{b - 1}[f](x).
\end{gather*}
Therefore
\begin{gather*}
-\mathcal{I}_{b}^0[\tilde{f}] = \lim_{\epsilon \rightarrow 0^+}\bigg(2 \mathcal{I}_{b - 2}^1[f] \ln\epsilon + \int_{0}^{{\rm i}\infty} \frac{x^{b}}{b!}\,\partial_{x} \check{\mathcal{R}}_{b - 1}[f](x + {\rm i}\epsilon)\,\dd x \\
\hphantom{-\mathcal{I}_{b}^0[\tilde{f}] =}{} + \int_{0}^{-{\rm i}\infty} \frac{x^{b}}{b!}\,\partial_{x}\mathcal{R}_{b - 1}[f](x - {\rm i}\epsilon)\,\dd x\bigg).
\end{gather*}
An integration by parts yields
\begin{gather*}
-\mathcal{I}_{b}^0[\tilde{f}] = \lim_{\epsilon \rightarrow 0^+} \bigg(2\mathcal{I}_{b - 2}^1[f]\,\ln\epsilon - \int_{0}^{{\rm i}\infty} \frac{x^{b - 1}}{(b - 1)!}\,\check{\mathcal{R}}_{b - 1}[f](x + {\rm i}\epsilon)\dd x\\
\hphantom{-\mathcal{I}_{b}^0[\tilde{f}] =}{} - \int_{0}^{-{\rm i}\infty} \frac{x^{b - 1}}{(b - 1)!}\,\check{\mathcal{R}}_{b - 1}[f](x - {\rm i}\epsilon)\dd x \\
\hphantom{-\mathcal{I}_{b}^0[\tilde{f}] =}{} + \left[\frac{x^{b}}{b!}\,\check{\mathcal{R}}_{b - 1}[f](x + {\rm i}\epsilon)\right]_{0}^{{\rm i}\infty} + \left[\frac{x^{b}}{b!}\,\check{\mathcal{R}}_{b - 1}[f](x - {\rm i}\epsilon)\right]_{0}^{-{\rm i}\infty}\bigg).
\end{gather*}
The first line is by definition $\mathcal{I}_{b - 1}^0[f]$. Since $\check{\mathcal{R}}_{b - 1}[f](x) \in O\big(|x|^{-(b + 1)}\,|\dd x|\big)$ when $|x| \rightarrow \infty$, the boundary terms $\pm {\rm i}\infty$ in the last line do not contribute. And the boundary terms at $0$ vanish because of the power of $x$ in prefactor and the fact that $\epsilon > 0$ before we take the limit.
\end{proof}

\begin{remark} In \eqref{Iooooo} we gave a definition of the linear operator $\mathcal{I}_{b}^0$ for $1$-forms in $\mathcal{S}$ having no poles above $x = \infty$. We shall extend this definition, whenever necessary if $f$ has poles at $\infty$ or other singularities, such that the integration by parts (Lemma~\ref{IPP}) and linearity continue to hold.
\end{remark}

\subsection{Evaluation on the odd basis}\label{S53}
We now evaluate $\mathcal{I}_{b}^{\alpha}$ on the basis $\{\xi^{\alpha}_k(z)\,|\, k\in\bn,\, \alpha\in\{0,1\}\}$, defined in~\eqref{auxdif}, of meromorphic $1$-forms on $\bp^1$ with poles at $z = \pm 1$ and that are odd under $z \mapsto 1/z$.
By \eqref{IPP222} and Lemma~\ref{IPP} we have
\begin{gather*}
\mathcal{I}_{a}^{\alpha}\big[\xi_{b}^{\beta}\big] = \mathcal{I}_{a - b}^{\alpha}\big[\xi^{\beta}_{0}\big],
\end{gather*}
so it is enough to evaluate the operators on $\xi^{\beta}_{0}$ for $\beta = 0,1$.

\begin{proposition}\label{PROPJIU}The operators $\mathcal{I}_{a}^{\alpha}$ evaluate on the basis as follows:
\begin{gather*}
\renewcommand{\arraystretch}{1.6}
\begin{array}{|c||c|c||c|c|}
\hline
\phantom{xxxx} & \mathcal{I}^0_{2m} & \mathcal{I}^0_{2m + 1} & \mathcal{I}^1_{2m} & \mathcal{I}^1_{2m + 1}\bsep{4pt} \\
\hline
\xi^0_{0} & \dfrac{1 - 2mH_{m}}{m!^2} & 0 & 0 & \dfrac{1}{m!(m + 1)!} \tsep{8pt}\bsep{8pt}\\
\hline
\xi^1_0 & 0 & -\dfrac{2H_{m}}{m!^2} & \dfrac{1}{m!^2} & 0 \tsep{8pt}\bsep{8pt}\\
\hline
\end{array}
\renewcommand{\arraystretch}{1.0}
\end{gather*}
\end{proposition}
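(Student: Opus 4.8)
The plan is to reduce the whole table to the two forms $\xi_0^0$ and $\xi_0^1$ (the reduction $\mathcal{I}_a^\alpha[\xi_b^\beta]=\mathcal{I}_{a-b}^\alpha[\xi_0^\beta]$ is already in hand), compute these explicitly, and evaluate the four functionals on them. From \eqref{auxdif} one finds the rational $1$-forms
$$\xi_0^0(z)=\frac{2z\,\dd z}{(z^2-1)^2},\qquad \xi_0^1(z)=\frac{(z^2+1)\,\dd z}{(z^2-1)^2},$$
and, using $z^2-1=-z\sqrt{x^2-4}$ and $z^2+1=xz$ along $z=Z(x)$, their pullbacks
$$\check\xi_0^0(x)=\frac{-2\,\dd x}{(x^2-4)^{3/2}},\qquad \check\xi_0^1(x)=\frac{-x\,\dd x}{(x^2-4)^{3/2}}.$$
As a function of $x$ the first is even and the second odd, which will immediately force the vanishing pattern of the table.

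For the two $\mathcal{I}^1$ columns I would compute the residue at infinity directly, no regularisation being needed. Expanding via the binomial series $(x^2-4)^{-3/2}=\sum_k\binom{-3/2}{k}(-4)^k x^{-3-2k}$ and using $\binom{-3/2}{m}(-4)^m=\frac{(2m+1)!}{(m!)^2}$ gives $\mathcal{I}^1_{2m+1}[\xi_0^0]=\frac{1}{m!(m+1)!}$ and $\mathcal{I}^1_{2m}[\xi_0^1]=\frac{1}{m!^2}$; the opposite-parity entries vanish because $\check\xi_0^0$ contributes only odd powers $x^{-3},x^{-5},\dots$ and $\check\xi_0^1$ only even powers $x^{-2},x^{-4},\dots$. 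One must keep track of the sign coming from the involution, since $\mathcal{I}^1$ is the residue at $z=\infty$ whereas $\check{}$ uses the branch $z=Z(x)\to 0$.

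For the two $\mathcal{I}^0$ columns I would use the regularised definition \eqref{Iooooo} with these explicit $\check\xi_0^\alpha$. After subtracting the Laurent tail $\check{\mathcal{R}}_b$ and rotating onto the imaginary axis (here the branch of $(x^2-4)^{3/2}$, i.e.\ the phases $\pm i$ picked up on the two sides of the cut, must be tracked with care), everything reduces to the Beta-type integral $\int_0^\infty t^b(t^2+4)^{-3/2}\,\dd t=\frac{2^{b-2}}{\sqrt\pi}\,\Gamma\!\big(\tfrac{b+1}2\big)\Gamma\!\big(1-\tfrac b2\big)$. The factor $\Gamma(1-\tfrac b2)$ has a pole exactly at the even $b$ for which $\mathcal{I}^1_{b-1}\neq0$; this pole is the logarithmic divergence that the $2\mathcal{I}_{b-1}^1[f]\ln\epsilon$ term in \eqref{Iooooo} regularises, and its Hadamard finite part (a digamma value) is what produces the harmonic numbers. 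Matching the $\ln\epsilon$ subtraction to this finite part and combining with $\Gamma(m+\tfrac12)=\frac{(2m)!}{4^m m!}\sqrt\pi$ should give $\mathcal{I}^0_{2m}[\xi_0^0]=\frac{1-2mH_m}{m!^2}$ and $\mathcal{I}^0_{2m+1}[\xi_0^1]=-\frac{2H_m}{m!^2}$, the other parities vanishing as before. The case $\mathcal{I}^0_0[\xi_0^0]=1$ can be checked independently and cleanly through Lemma~\ref{IntL}, since there all lower $\mathcal{I}^1$ vanish and the contour integral of $\xi_0^0$ from $0$ to $\mathrm{i}\infty$ is elementary.

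An independent cross-check, which I would also carry out, is Corollary~\ref{LEPTH}: the residues at $z=\pm1$ collapse (the $\ln(-1)=\mathrm{i}\pi$ contributions cancel because $\dd x$ vanishes at the ramification points $z=\pm1$) to the strikingly simple rational functions $\mathcal{A}[\xi_0^0](x_1)=\frac{x_1}{x_1^2-4}$ and $\mathcal{A}[\xi_0^1](x_1)=\frac{2}{x_1^2-4}$; expanding these, subtracting the (log-free, with $z_1$ taken at $z=\infty$) anomalous term, and reading off the coefficient of $x_1^{-(b+2)}$ yields $2H_{b+1}\mathcal{I}_b^1+\mathcal{I}_{b+1}^0$, from which $\mathcal{I}^0$ follows using the already-computed $\mathcal{I}^1$. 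The main obstacle is the same in both routes: carrying out the harmonic-number bookkeeping—collapsing the $H_{b+1}$ (equivalently the Gamma-pole finite part) together with the $\mathcal{I}^1$ values and the anomalous contribution into the clean single-index form $H_m$—while keeping every branch choice and the $\ln\epsilon$ regularisation mutually consistent.
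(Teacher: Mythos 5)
Your skeleton is sound, and your ``cross-check'' route is in fact the paper's own proof: reduce everything to $\xi_0^0,\xi_0^1$ by integration by parts, read the $\mathcal{I}^1$ columns off the Laurent expansion of $\xi_0^\alpha/\dd x$ at infinity (your binomial computation agrees with the paper's), and obtain the $\mathcal{I}^0$ columns from Corollary~\ref{LEPTH} applied to the residue sums, whose values $\mathcal{A}\big[\xi_0^0\big](x_1)=\frac{x_1}{x_1^2-4}$ and $\mathcal{A}\big[\xi_0^1\big](x_1)=\frac{2}{x_1^2-4}$ you state correctly, including the correct reason why the choice of $\ln(-1)$ does not matter (double poles with no residue at $z=\pm 1$, where $\dd x$ vanishes).

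The genuine gap is that both of your routes stop exactly where the proof's real content begins. In the paper's route, after subtracting the anomalous term one must expand $2\frac{f(z_1)}{\dd x_1}\ln\big(x(z_1)/z_1\big)$ in powers of $1/x_1$, i.e.\ compute in closed form the coefficients $L_m$ of $(1-4t)^{-3/2}\ln\big(\tfrac{2}{1+\sqrt{1-4t}}\big)$; this is Lemma~\ref{Lemlm}, which needs its own residue representation, the substitution $t=v/(1+v)^2$, a $\Gamma$-derivative trick, and two nontrivial binomial-sum evaluations, and it constitutes the bulk of the paper's proof. Your phrase ``harmonic-number bookkeeping \dots should give'' defers precisely this; nothing in the proposal shows that the combination of $\mathcal{A}$, the $L_m$, and the $2H_{b+1}\mathcal{I}^1_b$ term collapses to the single-index forms $\frac{1-2mH_m}{m!^2}$ and $-\frac{2H_m}{m!^2}$. (That this collapse is delicate is illustrated by the paper itself: the printed closed form in Lemma~\ref{Lemlm} has an off-by-one, $H_{m+1}$ where $H_m$ is forced by $L_1=1$, and only the corrected form reproduces the table.) The same criticism applies to your primary Beta-integral route: identifying the $\Gamma(1-\tfrac b2)$ poles with the $\ln\epsilon$ counterterm is the right idea, but you must still prove that the Euler--Mascheroni and $\ln 2$ constants from the digamma finite parts cancel and that the remainder is exactly $H_m$, which is an argument of the same weight as Lemma~\ref{Lemlm}.

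One further concrete trap in your primary route: your pullbacks $\check\xi_0^\alpha$ are computed on the stated branch $Z(x)\to 0$, and so carry an overall minus sign relative to the expansion at $z=\infty$ that defines the tail coefficients $\mathcal{I}^1_a$ (this is visible in your $\check\xi_0^0=-2(x^2-4)^{-3/2}\dd x$ versus the paper's $+2(x^2-4)^{-3/2}$). If you insert these literally into \eqref{Iooooo}, the subtraction in $\check{\mathcal{R}}_b$ doubles the tail instead of cancelling it, and the integrals then diverge at infinity for $b\geq 2$. So the sign/branch issue you flag as ``to be tracked with care'' is not cosmetic bookkeeping: the regularised integral is only well-posed if the pullback is taken on the branch consistent with the $z=\infty$ expansion, and resolving this is a prerequisite for your route (a) to make sense at all.
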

\begin{proof} The evaluation of $\mathcal{I}^1$ is determined by the Laurent series expansion of $\xi_{0}^{\beta}$. We have
\begin{gather*}
\frac{\xi_{0}^0}{\dd x} = \frac{2}{(x^2 - 4)^{3/2}} \sim \sum_{m \geq 0} \frac{(2m + 2)!}{m!(m + 1)!}\,\frac{1}{x^{2m + 3}}, \\
\frac{\xi_{0}^1}{\dd x} = \frac{x}{(x^2 - 4)^{3/2}} \sim \sum_{m \geq 0} \frac{1}{2}\,\frac{(2m + 2)!}{m!(m + 1)!}\,\frac{1}{x^{2m + 2}},
\end{gather*}
which yield the entries of the last two columns. The evaluation of $\mathcal{I}^0$ can be computed via Lemma~\ref{LEPTH}. Let us introduce the formal series
\begin{gather*}
L(t) := \frac{1}{(1 - 4t)^{3/2}}\ln\left(\frac{2}{1 + \sqrt{1 - 4t}}\right) := \sum_{m \geq 0} L_m\,t^{m}.
\end{gather*}
We will compute the $L_m$ more explicitly in Lemma~\ref{Lemlm} at the end of the proof. For $f = \xi^{0}_{0}$ we compute
\begin{gather*}
\sum_{a=\pm1}\Res_{z =a} \frac{\xi_0^0(z)\ln z}{x_1 - x(z)} = \frac{x_1}{x_1^2 - 4} \sim \sum_{m \geq -1} \frac{2^{2m + 2}}{x_1^{2m + 3}}.
\end{gather*}
Note that the choice of determination of the logarithm (provided it is holomorphic in a neighborhood of $1$ and $-1$) does not affect the result. We also have
\begin{gather*}
2f(z_1)\ln(x(z_1)/z_1) = \frac{4L(x_1^{-2})}{x_1^3} \sim \sum_{m \geq 0} \frac{4L_{m}}{x^{2m + 3}}.
\end{gather*}
Using the values of $\mathcal{I}^1[\xi_{0}^{0}]$ already found, we deduce from Lemma~\ref{LEPTH}
\begin{gather}
\label{LANI}\mathcal{I}_{2m + 1}\big[\xi_0^0\big] = 0,\qquad \mathcal{I}_{2m}\big[\xi_{0}^{0}\big] = \frac{2^{2m} + 4L_{m - 1}}{2m!} - \frac{2H_{2m}}{(m - 1)!m!}.
\end{gather}
For $f = \xi^{1}_{0}$ we compute
\begin{gather*}
\sum_{a=\pm1}\Res_{z =a} \frac{\xi_0^1(z)\,\ln z}{x_1 - x(z)} = \frac{2}{x_1^2 - 4} \sim \sum_{m \geq 0} \frac{2^{2m + 1}}{x_1^{2m + 2}},
\end{gather*}
and
\begin{gather*}
2f(z_1)\ln(x(z_1)/z_1) = \frac{2L\big(x_1^{-2}\big)}{x_1^2} \sim \sum_{m \geq 0} \frac{2L_{m}}{x^{2m + 2}}.
\end{gather*}
Using the known values of $\mathcal{I}^{1}$ on $\xi_0^1$ we get
\begin{gather}\label{LANJ}\mathcal{I}_{2m + 1}\big[\xi^1_{0}\big] = \frac{2\big(2^{2m} + L_m\big)}{(2m + 1)!} - \frac{2H_{2m + 1}}{m!^2},\qquad \mathcal{I}_{2m}\big[\xi_{0}^{1}\big] = 0.
\end{gather}

Now let us evaluate the constants $L_m$.

\begin{lemma}\label{Lemlm} For any $m \geq 1$, we have
\begin{gather*}
L_m = -2^{2m} + \frac{(2m + 1)!}{m!^2}(H_{2m + 1} - H_{m + 1}) .
\end{gather*}
\end{lemma}

\begin{proof}With the change of variable $t = \frac{v}{(1 + v)^2}$, we compute
\begin{align*}
L_m & = \Res_{t = 0} \frac{\dd t}{t^{m + 1}\,(1 - 4t)^{3/2}}\,\ln\left(\frac{2}{1 + \sqrt{1 - 4t}}\right) \\
& = \Res_{v = 0} \frac{\dd v\,(1 + v)^{2m + 2}\,\ln(1 + v)}{v^{m + 1}(1 - v)^2} \\
& = \frac{\dd}{\dd \epsilon}\left(\Res_{v = 0} \frac{\dd v\,(1 + v)^{2m + 2 + \epsilon}}{v^{m + 1}(1 - v)^2}\right)\bigg|_{\epsilon = 0} \\
& = \frac{\dd}{\dd\epsilon}\left(\sum_{a = 0}^{m} \frac{(a + 1)\Gamma(2m + 3 + \epsilon)}{(m - a)!\Gamma(m + 3 + \epsilon + a)!}\right)\bigg|_{\epsilon = 0}.
\end{align*}
Using that $(\ln \Gamma)'(k + 1) = -\gamma_{E} + H_{k}$ for any positive integer $k$ where $\gamma_{E}$ is the Euler--Mascheroni constant, we deduce
\begin{gather}
\label{Lmexp} L_m = \sum_{a = 0}^{m} \frac{(a + 1)\,(2m + 2)!}{(m - a)!(m + 2 + a)!}\big(H_{2m + 2} - H_{m + 2 + a}\big) = K_m\big(H_{2m + 2} - H_{m + 1}\big) + \Delta_m,
\end{gather}
where
\begin{gather*}
K_m := \sum_{a = 0}^{m} \frac{(a + 1)\,(2m + 2)!}{(m - a)!(m + 2 + a)!},\qquad \Delta_m := \sum_{a = 0}^{m} \frac{(a + 1)\,(2m + 2)!}{(m - a)!(m + 2 + a)!} \sum_{j = m + 2}^{m + 2 + a} \frac{1}{j}.
\end{gather*}
These two sums can be computed in an elementary way. Let us introduce the auxiliary sum for $c \in \{0,1,\ldots,m\}$
\begin{gather*}
\tilde{K}_{m,c} = \sum_{b = 0}^{c} \frac{(m + 1 - b)\,(2m + 2)!}{b!(2m + 2 - b)!}.
\end{gather*}
An easy induction on $c$ shows that
\begin{gather*}
\tilde{K}_{m,c} = \frac{1}{2}\,\frac{(2m + 2)!}{c!(2m + 1 - c)!}.
\end{gather*}
The change of index $a = m - b$ shows that $K_m = \tilde{K}_{m,m}$, hence
\begin{gather}
\label{Kmexp} K_m = \frac{1}{2}\,\frac{(2m + 2)!}{m!(m + 1)!} = \frac{(2m + 1)!}{m!^2}.
\end{gather}
It remains to evaluate
\begin{align*}
\Delta_m & = \sum_{j = m + 2}^{2m + 2} \frac{1}{j} \sum_{a = j - (m + 2)}^{m} \frac{(a + 1)\,(2m + 2)!}{(m - a)!(m + 2 + a)!} \\
& = \sum_{j = m + 2}^{2m + 2} \frac{1}{j} \sum_{b = 0}^{2m + 2 - j} \frac{(m + 1 - b)\,(2m + 2)!}{b!(2m + 2 - b)!} \\
& = \sum_{j = m + 2}^{2m + 2} \frac{1}{j} \tilde{K}_{m,2m + 2 - j}.
\end{align*}
Therefore
\begin{align}
\Delta_m & = \frac{1}{2} \sum_{j = m + 2}^{2m + 2} \frac{(2m + 2)!}{(2m + 2 - j)!(j - 1)!} = \frac{1}{2} \sum_{j = m + 2}^{2m + 2} \frac{(2m + 2)!}{(2m + 2 - j)!j!} \nonumber\\
 & = \frac{1}{4} \sum_{j = 0}^{2m + 2} \frac{(2m + 2)!}{(2m + 2 - j)!j!} \nonumber\\
 & = \frac{1}{4} 2^{2m + 2} = 2^{2m}.\label{Deltamexp}
\end{align}
Inserting \eqref{Kmexp} and \eqref{Deltamexp} in \eqref{Lmexp} establishes our formula for $L_m$.
\end{proof}

Inserting this result into \eqref{LANI} and \eqref{LANJ} yields the two first columns and concludes the proof of Proposition~\ref{PROPJIU}.
\end{proof}

\begin{proposition} \label{xaction}
For $j,k,m\in\bn$ and $\alpha,\beta\in\{0,1\}$
\begin{gather} \label{opcomp}
\ci^\beta_j\left(\frac{x^k}{k!}\xi_m^\alpha\right) = \binom{j+k+\beta}{k}\,\ci^\beta_{j+k}\big(\xi_m^\alpha\big) + 2\delta_{0,\beta}\binom{j+k}{k}(H_{j + k} - H_j)\,\ci^1_{j+k-1}\big(\xi_m^\alpha\big).
\end{gather}
\end{proposition}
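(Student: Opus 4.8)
Proposition~\ref{xaction} computes how the linear functional $\ci^\beta_j$ interacts with multiplication by $x^k/k!$ when applied to a basis differential $\xi_m^\alpha$. The key structural fact, visible from the explicit formulas for $\ci^1_b$ and $\ci^0_b$, is that $\ci^\beta_j$ is essentially ``integration against $x^j/j!$,'' so inserting an extra factor $x^k/k!$ should shift the index from $j$ to $j+k$ up to a binomial coefficient. The plan is therefore to treat the two cases $\beta=1$ and $\beta=0$ separately, since the stationary functional $\ci^1$ is a pure residue at $\infty$ (no regularisation) whereas the non-stationary functional $\ci^0$ carries the logarithmic correction responsible for the second term in~\eqref{opcomp}.

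\textbf{The stationary case $\beta=1$.} Here I would argue directly from the definition $\ci^1_b[f]=-\Res_{\infty}\frac{x^{b+1}}{(b+1)!}f$. Writing $g=\xi_m^\alpha$, I compute
\begin{align*}
\ci^1_j\!\left(\frac{x^k}{k!}\,g\right) &= -\Res_\infty \frac{x^{j+1}}{(j+1)!}\,\frac{x^k}{k!}\,g = -\frac{(j+k+1)!}{(j+1)!\,k!}\,\Res_\infty \frac{x^{j+k+1}}{(j+k+1)!}\,g \\
&= \binom{j+k+1}{k}\,\ci^1_{j+k}(g),
\end{align*}
which is exactly~\eqref{opcomp} with $\beta=1$ (the second term vanishes as $\delta_{0,\beta}=0$). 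This case is essentially algebraic manipulation of factorials inside the residue symbol.

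\textbf{The non-stationary case $\beta=0$ (the main obstacle).} This is where the real work lies, because $\ci^0_b$ is the regularised integral~\eqref{I^0} rather than a plain residue. The plan is to exploit the integration-by-parts property of Lemma~\ref{IPP} together with the evaluation reductions $\ci^\alpha_a[\xi_b^\beta]=\ci^\alpha_{a-b}[\xi_0^\beta]$ established in Section~\ref{S53}. One strategy is to mimic the structure of Corollary~\ref{LEPTH}: the logarithmic term in the asymptotic expansion of $\mathcal{A}[f]$ is precisely what produces the harmonic-number correction $H_{j+k}-H_j$ and the auxiliary functional $\ci^1$ in the second summand. Concretely I would track how multiplying by $x^k/k!$ interacts with the definition~\eqref{I^0}: the factor $\frac{x^{b}}{b!}$ in the regularised integral becomes $\frac{x^{b+k}}{b!}$, and reorganising this as $\binom{b+k}{k}\frac{x^{b+k}}{(b+k)!}$ shifts the regularised integral from index $b$ to $b+k$; the subtracted polar part $\check{\mathcal{R}}_b$ versus $\check{\mathcal{R}}_{b+k}$ differs by finitely many terms $\frac{(a+1)!\,\dd x}{x^{a+2}}\ci^1_a[f]$, and integrating these discarded terms against $x^{b+k}/(b+k)!$ over the imaginary axis produces logarithmically divergent pieces whose regularised values are exactly the $\ln\epsilon$ contributions, yielding the $\ci^1_{j+k-1}$ term with coefficient $2(H_{j+k}-H_j)$.

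\textbf{Expected difficulty and a cleaner route.} The bookkeeping of the $\ln\epsilon$ terms and the mismatch between $\check{\mathcal{R}}_b$ and $\check{\mathcal{R}}_{b+k}$ is the delicate step, since one must show the divergences cancel and leave precisely the stated harmonic-number combination. A cleaner alternative I would attempt is an induction on $k$: the case $k=0$ is trivial, and for the inductive step one writes $x^{k}/k! = x\cdot x^{k-1}/k!$ and uses that multiplication by $x$ can be traded, via $\xi^\alpha_{m-1}=-x'\,\xi^\alpha_m\cdots$ and the integration-by-parts relation of Lemma~\ref{IPP}, for an index shift on the differential. This reduces the identity to verifying a recursion among binomial coefficients and the harmonic correction, namely that
\begin{gather*}
\binom{j+k+\beta}{k} = \binom{(j+1)+(k-1)+\beta}{k-1} + \cdots
\end{gather*}
closes correctly together with the identity $H_{j+k}-H_j = (H_{j+k}-H_{j+1})+\tfrac{1}{j+1}$. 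I expect the main obstacle either way to be confirming that the regularisation-induced $H$-correction propagates consistently; once the $k=1$ case is pinned down by direct appeal to the definition~\eqref{I^0} and Lemma~\ref{IPP}, the general case should follow by this induction.
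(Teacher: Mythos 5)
Your $\beta=1$ case is correct and is exactly the paper's argument (one line of factorial algebra inside the residue). Your instinct to handle $\beta=0$ by induction on $k$, with trivial base case $k=0$, is also the paper's strategy. But your sketch of the inductive step has a genuine gap: you claim that multiplication by $x$ ``can be traded\ldots for an index shift on the differential,'' invoking the defining recursion \eqref{auxdif}. That recursion, $\xi^\alpha_{m}=-\dd\big(\xi^\alpha_{m-1}/\dd x\big)$, governs the operator $-\dd(\cdot/\dd x)$, not multiplication by $x$, and multiplication by $x$ is emphatically \emph{not} an index shift. The identity actually needed (equation \eqref{xact} in the paper, proved by induction on $m$ by applying $-\dd(\cdot/\dd x)$ to both sides, with an explicit check at $m=0$) is
\begin{gather*}
x\,\xi_m^\alpha = 2\,\xi_m^{1-\alpha} + (m+\alpha)\,\xi_{m-1}^{\alpha},
\end{gather*}
whose leading term swaps the superscript family $\alpha \leftrightarrow 1-\alpha$. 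Without this fact the inductive step cannot even be written down.

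Furthermore, after applying this identity and the inductive hypothesis at $k-1$, you are left with evaluations $\ci^0_{\bullet}\big(\xi_m^{1-\alpha}\big)$ and $\ci^1_{\bullet}\big(\xi_m^{1-\alpha}\big)$, and closing the induction requires converting these back into evaluations on $\xi_m^\alpha$. The paper does this via
\begin{gather*}
\ci^\beta_{j+k-1-\beta}\big(\xi_m^{1-\alpha}\big)=\tfrac12(j+k-m-\alpha)\,\ci^\beta_{j+k-\beta}\big(\xi_m^{\alpha}\big)+\delta_{\beta,0}\,\ci^1_{j+k-1}\big(\xi_m^{\alpha}\big),
\end{gather*}
which is verified against the explicit table of Proposition~\ref{PROPJIU}; the $\delta_{\beta,0}$ term here is precisely what generates the harmonic-number correction in \eqref{opcomp}, and the final collection of terms uses $H_{j+k}=H_{j+k-1}+\tfrac{1}{j+k}$ (not the identity in $j$ that you wrote, which would correspond to an induction incrementing $j$ and is inconsistent with your own splitting $x^k/k!=x\cdot x^{k-1}/k!$). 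Neither this conversion step nor any substitute for it appears in your proposal. Your alternative first route (direct bookkeeping of the $\ln\epsilon$ terms and the mismatch between $\check{\mathcal{R}}_b$ and $\check{\mathcal{R}}_{b+k}$ in the definition \eqref{I^0}) is only a plan, not an argument: you yourself flag the cancellation of divergences as the delicate point and do not resolve it. In short, you have the right skeleton, but the two load-bearing identities — \eqref{xact} and the table-based conversion — are missing, and your stated mechanism for the multiplication by $x$ is incorrect.
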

\begin{proof}
This is straightforward when $\beta=1$, coming from $\frac{1}{(j+1)!k!}=\binom{j+k+1}{k}\frac{1}{(j+k+1)!}$.
The main content of the lemma is the case $\beta=0$ which we prove by induction on~$k$.

When $k=0$, the second term vanishes and \eqref{opcomp} is true in this case. The inductive argument requires the identity
\begin{gather} \label{xact}
x\xi_m^\alpha=2\xi_m^{1-\alpha}+(m+\alpha)\xi_{m-1}^{\alpha},\qquad m\geq 0,
\end{gather}
which is proven by induction on $m$ by applying $-\dd(\cdot/\dd x)$ to both sides of \eqref{xact}. The initial case $m=0$ of~\eqref{xact} is an explicit calculation for $\alpha=0$ and $\alpha=1$ involving $\xi^\alpha_{-1}(z)$ defined in~\eqref{auxdif}.

Given $k>0$, assume \eqref{opcomp} is true for $k-1$. Then
\newcommand{\jk}{j\hspace{-.5mm}+\hspace{-.5mm}k\hspace{-.5mm}-\hspace{-.5mm}1}
\begin{gather*}
 \ci^0_j\left(\frac{x^k}{k!}\xi_m^\alpha\right) = \ci^0_j\left(\frac{x^{k-1}}{k!}x\xi_m^\alpha\right)=\ci^0_j\left(\frac{x^{k-1}}{k!}\big[2\xi_m^{1-\alpha}+(m+\alpha)\xi_{m-1}^{\alpha}\big]\right)\\
 \hphantom{\ci^0_j\left(\frac{x^k}{k!}\xi_m^\alpha\right)}{} = \frac{2}{k}\ci^0_j\left(\frac{x^{k-1}}{(k-1)!}\xi_m^{1-\alpha}\right)+\frac{m+\alpha}{k}\ci^0_j\left(\frac{x^{k-1}}{(k-1)!}\xi_{m-1}^{\alpha}\right)\\
\hphantom{\ci^0_j\left(\frac{x^k}{k!}\xi_m^\alpha\right)}{} = \frac{2}{k}\binom{\jk}{k-1} \ci^0_{j+k-1}\big(\xi_m^{1-\alpha}\big) + \frac{4}{k}\binom{\jk}{k-1}(H_{j + k - 1} - H_{j})\,\ci^1_{j+k-2}\big(\xi_m^{1-\alpha}\big)\\
\hphantom{\ci^0_j\left(\frac{x^k}{k!}\xi_m^\alpha\right)=}{}+\frac{m+\alpha}{k}\binom{\jk}{k-1} \ci^0_{j+k-1}\big(\xi_{m-1}^{\alpha}\big) \\
\hphantom{\ci^0_j\left(\frac{x^k}{k!}\xi_m^\alpha\right)=}{}+ \frac{2(m+\alpha)}{k}\binom{\jk}{k-1}(H_{j + k - 1} - H_{j})\,\ci^1_{j+k-2}\big(\xi_{m-1}^\alpha\big),
\end{gather*}
where the second equality uses \eqref{xact} and the final equality uses the inductive hypothesis. We manipulate this expression for $\ci^0_j\big(\frac{x^k}{k!}\xi_m^\alpha\big)$ to consist of only evaluations involving $\xi_m^{\alpha}$ as follows. We use integration by parts $\ci^\alpha_{i-1}\big(\xi_{m-1}^{\alpha}\big)=\ci^\alpha_i\big(\xi_{m}^{\alpha}\big)$ and for those evaluations involving $\xi_m^{1-\alpha}$ substitute
\begin{gather*}
\ci^\beta_{j+k-1-\beta}\big(\xi_m^{1-\alpha}\big)=\tfrac12(j+k-m-\alpha)\ci^\beta_{j+k-\beta}\big(\xi_m^{\alpha}\big)+\delta_{\beta,0}\ci^1_{j+k-1}\big(\xi_m^{\alpha}\big),
\end{gather*}
which can be checked using the table of values in Proposition~\ref{PROPJIU}. Collecting the coefficients of $\mathcal{I}_{j + k}^{0}\big(\xi_{m}^{\alpha}\big)$ and $\mathcal{I}_{j + k - 1}^1\big(\xi_{m}^{\alpha}\big)$ we get
\begin{gather*}
\ci^0_j\left(\frac{x^k}{k!}\xi_m^\alpha\right) = \frac{2}{k}\binom{\jk}{k-1}(H_{j + k - 1} - H_{j})(j+k-m-\alpha) \ci^1_{j+k-1}\big(\xi_{m}^{\alpha}\big) \\
 \hphantom{\ci^0_j\left(\frac{x^k}{k!}\xi_m^\alpha\right) =}{} + \frac{2(m+\alpha)}{k}\binom{\jk}{k-1}(H_{j + k - 1} - H_{j}) \ci^1_{j+k-1}(\xi_{m}^\alpha) \\
\hphantom{\ci^0_j\left(\frac{x^k}{k!}\xi_m^\alpha\right)}{}= \frac{j+k}{k}\binom{\jk}{k-1} \ci^0_{j+k}\big(\xi_m^\alpha\big) \\
\hphantom{\ci^0_j\left(\frac{x^k}{k!}\xi_m^\alpha\right) =}{}+ 2 \binom{\jk}{k-1}\left(\frac{j + k}{k}(H_{j + k - 1} - H_{j}) + \frac{1}{k}\right) \ci^1_{j+k-1}\big(\xi_m^\alpha\big) \\
\hphantom{\ci^0_j\left(\frac{x^k}{k!}\xi_m^\alpha\right)}{}= \binom{j+k}{k}\big\{\ci^0_{j+k}\big(\xi_m^\alpha\big) + 2(H_{j + k} - H_{j}) \ci^1_{j+k-1}\big(\xi_m^\alpha\big)\big\}.
\end{gather*}
Hence \eqref{opcomp} for $k-1$ implies \eqref{opcomp} for $k$, and by induction \eqref{opcomp} is true for $k\geq 0$.
\end{proof}

\subsection[Evaluation of one operator on $\omega_{0,2}^{{\rm odd}}$]{Evaluation of one operator on $\boldsymbol{\omega_{0,2}^{{\rm odd}}}$}

In the statement of Theorem~\ref{insertions} for $(0,2)$, we need to consider $\mathcal{I}_{k}^{\alpha} \otimes \mathcal{I}_{\ell}^{\beta}\big[\omega_{0,2}^{{\rm odd}}\big]$. We can certainly pose
\begin{gather*}
\eta_{k}^{\alpha}(z_0) := \mathcal{I}_{k}^{\alpha}\big[\omega_{0,2}^{{\rm odd}}(\cdot,z_0)\big],
\end{gather*}
but it is not obvious that we can then apply $\mathcal{I}_{\ell}^{\beta}$. Indeed, we now show that $\eta_{k}^{\alpha}(z)$ has singularities, but they are such that after sufficiently many integration by parts it will become a meromorphic $1$-form without singularities at $\infty$.

\begin{lemma}\label{etade} For $k \geq 0$ we have
\begin{gather*}
\eta_k^1(z_0) = \dd_{z_0}\Bigg(\sum_{\substack{b,c \geq 0 \\ 2b + c = k}} \frac{1}{b!(k + 1 - b)!} \frac{z_0^{c + 1} - z_0^{-(c + 1)}}{2}\Bigg), \\
\eta_k^0(z_0) = \dd_{z_0}\Bigg(\frac{x(z_0)^{k}}{k!}\,\tfrac{1}{2}\big(\ln z_0 - \ln\big(z_0^{-1}\big)\big) - \sum_{\substack{b,c \geq 0 \\ 2b + c = k - 1}} \frac{H_{k - b}}{b!(k - b)!}\big(z_0^{c + 1} - z_0^{-(c + 1)}\big)\Bigg).
\end{gather*}
\end{lemma}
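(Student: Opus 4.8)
The plan is to reduce everything to the single identity
\[
\mathcal{A}\big[\omega_{0,2}^{{\rm odd}}(\cdot,z_0)\big](x_1) = \mathrm{d}_{z_0}\frac{\ln z_0}{x_1 - x(z_0)},
\]
and then read off $\eta_k^\alpha$ from its Laurent expansion at $x_1\to\infty$ via Corollary~\ref{LEPTH}. First I would record that, as a $1$-form in its first argument with $z_0$ a spectator,
\[
\omega_{0,2}^{{\rm odd}}(z,z_0) = \tfrac12\Big(\tfrac{1}{(z-z_0)^2} + \tfrac{1}{(1-zz_0)^2}\Big)\,\mathrm{d}z\,\mathrm{d}z_0
\]
is odd under $z\mapsto 1/z$ and has double poles only at $z=z_0$ and $z=1/z_0$, away from $0,\infty$; hence Corollary~\ref{LEPTH} applies (for generic $z_0\notin {\rm i}\mathbb{R}$, the general case following by analytic continuation).

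For $\alpha=1$ the computation is a plain residue at infinity. Since $\mathcal{I}^1_k[f]=-\Res_\infty \frac{x^{k+1}}{(k+1)!}f$, I would substitute the large-$z$ expansions $\frac{1}{(z-z_0)^2}=\sum_{j\ge0}(j+1)z_0^{j}z^{-j-2}$ and $\frac{1}{(1-zz_0)^2}=\sum_{j\ge0}(j+1)z_0^{-j-2}z^{-j-2}$, expand $x^{k+1}=(z+1/z)^{k+1}$ by the binomial theorem, and extract the coefficient of $z^{-1}$. Writing $i$ for the binomial index and setting $c=k-2i$, $b=i$ turns the result into $\tfrac12\sum_{2b+c=k}\frac{c+1}{b!\,(k+1-b)!}\big(z_0^{c}+z_0^{-(c+2)}\big)\,\mathrm{d}z_0$, which is exactly $\mathrm{d}_{z_0}$ of the claimed sum; the identity $\frac{1}{(k+1)!}\binom{k+1}{i}=\frac{1}{i!\,(k+1-i)!}$ makes the match term by term.

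For $\alpha=0$ I would first establish the displayed identity by computing the two double-pole residues defining $\mathcal{A}[\omega_{0,2}^{{\rm odd}}(\cdot,z_0)](x_1)$ at $z=z_0$ and $z=1/z_0$; using $x(1/z_0)=x(z_0)$, $x'(1/z_0)=1-z_0^2$ and $\ln(1/z_0)=-\ln z_0$, the two contributions combine to $\frac{1}{z_0}\frac{\mathrm{d}z_0}{x_1-x_0}+\ln z_0\,\mathrm{d}_{z_0}\frac{1}{x_1-x_0}=\mathrm{d}_{z_0}\frac{\ln z_0}{x_1-x_0}$, with $x_0:=x(z_0)$. I would then expand both sides of Corollary~\ref{LEPTH} in powers of $1/x_1$: the left-hand side via $\frac{1}{x_1-x_0}=\sum_m x_0^m x_1^{-m-1}$, and the anomalous term $-2\frac{\omega_{0,2}^{{\rm odd}}(z_1,z_0)}{\mathrm{d}x_1}\ln(x(z_1)/z_1)$ through the root $z_1\to\infty$ of $x(z_1)=x_1$, for which $\ln(x(z_1)/z_1)=\ln\!\big(\tfrac{2}{1+\sqrt{1-4/x_1^2}}\big)$ is an honest power series in $1/x_1^2$. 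Matching the coefficient of $x_1^{-(k+1)}$, which Corollary~\ref{LEPTH} equates to $k!\big(2H_k\,\eta^1_{k-1}(z_0)+\eta^0_k(z_0)\big)$, and subtracting the already-known $\eta^1_{k-1}$, isolates $\eta^0_k$; the $\ln z_0$ part comes solely from $\ln z_0\,\mathrm{d}_{z_0}\frac{1}{x_1-x_0}$ and yields $\mathrm{d}_{z_0}\big(\frac{x_0^k}{k!}\ln z_0\big)$, while the remaining rational pieces assemble into the harmonic-number sum.

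The main obstacle is this last matching in the $\alpha=0$ case: one must expand the anomalous term as a genuine Laurent series (being careful that $z_1$ is the root of $x(z_1)=x_1$ tending to $\infty$, so that no spurious $\ln x_1$ appears), and then reconcile the harmonic numbers produced by the $2H_k\eta^1_{k-1}$ subtraction with those implicit in the Hilbert-transform asymptotics behind Corollary~\ref{LEPTH}, so as to recognise the closed form $\sum_{2b+c=k-1}\frac{H_{k-b}}{b!(k-b)!}\big(z_0^{c+1}-z_0^{-(c+1)}\big)$. Since every term on both sides is $\mathrm{d}_{z_0}$-exact, it is cleanest to match scalar potentials rather than $1$-forms; alternatively, one can bypass Corollary~\ref{LEPTH} and compute $\mathcal{I}^0_k$ directly from its defining regularised integral \eqref{Iooooo}, deforming the contour and using $\check{\mathcal{R}}_k$ to subtract the stationary part, which reproduces the same two sums after an integration by parts.
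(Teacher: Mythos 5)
Your proposal is correct and follows essentially the same route as the paper's proof: the $\alpha=1$ case by direct residue extraction (the paper takes the residue at $z=0$, you at $z=\infty$; these agree by oddness under $z\mapsto 1/z$), and the $\alpha=0$ case by applying Corollary~\ref{LEPTH} to $f=\omega_{0,2}^{\rm odd}(\cdot,z_0)$, where your closed form $\mathcal{A}\big[\omega_{0,2}^{\rm odd}(\cdot,z_0)\big](x_1)=\dd_{z_0}\frac{\ln z_0}{x_1-x(z_0)}$ is exactly the double-pole residue the paper writes in antisymmetrized form under $z_0\mapsto 1/z_0$, and the harmonic-number sum emerges from the anomalous $\ln\big(1+z_1^{-2}\big)$ term (which the paper extracts via the $\epsilon$-derivative of $\big(1+z^2\big)^{k+\epsilon}$) combined with the $2H_k\,\mathcal{I}^1_{k-1}$ subtraction, exactly as you set up. The only part you leave unexecuted is that final coefficient assembly, but it is correctly isolated and is the same finite computation the paper performs.
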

\begin{proof} For $\alpha = 1$ we have
\begin{equation*}
\begin{split}
\eta_k^1(z_0) & = \frac{1}{2}\bigg( \Res_{z = 0} \frac{x(z)^{k + 1}}{(k + 1)!}\,\frac{\dd z\,\dd z_0}{(1 - zz_0)^2} - (z_0 \rightarrow 1/z_0)\bigg) \\
& = \frac{1}{2}\bigg(\sum_{c \geq 0} (c + 1)z_0^{c} \big[z^{k - c}\big] \frac{\big(1 + z^2\big)^{k + 1}}{(k + 1)!} \,\dd z_0 - (z_0 \rightarrow 1/z_0)\bigg) \\
& = \dd_{z_0}\bigg(\sum_{\substack{b,c \geq 0 \\ 2b + c = k}} \frac{1}{b!(k + 1 - b)!}\,\frac{z_0^{c + 1} - z_0^{-(c + 1)}}{2}\bigg).
\end{split}
\end{equation*}
For $\alpha = 0$, we apply Lemma~\ref{LEPTH}
\begin{gather*}
 \eta_k^0(z_0) = \frac{1}{2}\bigg\{ \Res_{z = \infty} \frac{x(z)^{k}}{k!}\left( \frac{2\ln\big(1 + z^{-2}\big) \dd z\,\dd z_0}{(z - z_0)^2}+ \Res_{\tilde{z} = z_0} \frac{\ln \tilde{z}\,\dd \tilde{z}\,\dd z_0}{(\tilde{z} - z_0)^2}\,\frac{\dd x(z)}{x(z) - x(\tilde{z})}\right) \\
\hphantom{\eta_k^0(z_0) =}{} +2H_{k} \Res_{z = \infty} \frac{x^{k}(z)}{k!}\,\frac{\dd z\,\dd z_0}{(z - z_0)^2} \quad - (z_0 \rightarrow 1/z_0)\bigg\}\\
\hphantom{\eta_k^0(z_0)}{} = \frac{\dd z_0}{2}\bigg\{ \frac{1}{2}\,\dd_{z_0}\left(\Res_{z = \infty} \frac{x(z)^{k}}{k!} \frac{\ln z_0\,\dd x(z)}{x(z) - x(z_0)}\right)\\
\hphantom{\eta_k^0(z_0) =}{}
 + \frac{2}{k!} \frac{\partial}{\partial \epsilon} \big[z^{k - 1}\big]\left(\sum_{c \geq 0} (c + 1)z_0^{c}z^{c}\big(1 + z^2\big)^{k + \epsilon}\right)\bigg|_{\epsilon = 0}\bigg\} \\
\hphantom{\eta_k^0(z_0) =}{} -\frac{2H_k}{k!}\big[z^{k - 1}\big]\bigg(\sum_{c \geq 0} (c + 1)z_0^{c}z^{c} \big(1 + z^2\big)^{k}\bigg) - (z_0 \rightarrow 1/z_0) \\
\hphantom{\eta_k^0(z_0)}{} = \dd_{z_0}\left(\frac{x(z_0)^{k}}{k!}\,\tfrac{1}{2}\big(\ln z_0 - \ln\big(z_0^{-1}\big)\big) - \sum_{\substack{b,c \geq 0 \\ 2b + c = k - 1}} \frac{H_{k - b}}{b!(k - b)!}\big(z_0^{c + 1} - z_0^{-(c + 1)}\big)\right).\tag*{\qed}
\end{gather*}\renewcommand{\qed}{}
\end{proof}

\section{Proof of Theorem~\ref{insertions}} \label{proofthm1}

\subsection[The $S$-matrix]{The $\boldsymbol{S}$-matrix}

This subsection relies on Proposition~\ref{PROPJIU} and does not need the computations with the operators beyond Section~\ref{S53}.
\begin{proposition} \label{Sixi}
For any $a,b \geq 0$ and $\alpha,\beta \in \{0,1\}$, we have $\ci^\alpha_a\big[\xi^\beta_b\big] =(S_{a-b})_\alpha^\beta$.
\end{proposition}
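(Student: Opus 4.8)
The plan is to reduce to the case $b=0$, where the values are already tabulated, and then to recognise the resulting generating series as the $S$-matrix of $\bp^1$.

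First, the reduction is immediate from the paper's own integration-by-parts identities: combining \eqref{IPP222} with Lemma~\ref{IPP} and the definition \eqref{auxdif} of the basis gives $\ci_a^\alpha[\xi_b^\beta]=\ci_{a-b}^\alpha[\xi_0^\beta]$, which depends only on $a-b$, exactly as $(S_{a-b})_\alpha^\beta$ does. When $a<b$ both sides vanish (the operators of negative index are zero by the convention $\ci_{-1}^1=0$, and $S_k=0$ for $k<0$), and when $a=b$ the $j=0$ column of Proposition~\ref{PROPJIU} yields $\ci_0^\alpha[\xi_0^\beta]=\delta_\alpha^\beta$, matching $S_0=\mathrm{Id}$. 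Thus it remains to prove $\ci_j^\alpha[\xi_0^\beta]=(S_j)_\alpha^\beta$ for $j\ge 0$.

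Next I would assemble the four entries of Proposition~\ref{PROPJIU} into generating series $\widehat S(u)_\alpha^\beta:=\sum_{j\ge 0}\ci_j^\alpha[\xi_0^\beta]\,u^j$ and identify $\widehat S$ with the $\bp^1$ $S$-matrix. The two entries with operator index $\alpha=1$ are the Bessel series $\sum_m u^{2m}/m!^2=I_0(2u)$ and $\sum_m u^{2m+1}/(m!(m+1)!)=I_1(2u)$; these are precisely the stationary row of the $S$-matrix, already identified in~\cite{DBOSS} through the functionals $\ci^1_k$ (see Remark~\ref{ratcont} and Theorem~\ref{th1p}), so the case $\alpha=1$ is settled without circularity, being an input from \cite{DBOSS}. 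The two entries with $\alpha=0$ carry the harmonic numbers $H_m$ and should be the logarithmic second solutions of the same Bessel-type quantum differential equation governing $\bp^1$.

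To finish, I would identify this $\alpha=0$ row with the non-stationary row of the $S$-matrix by checking that $\widehat S$ solves the defining quantum differential equation of the $\bp^1$ $S$-matrix with $\widehat S(0)=\mathrm{Id}$, which determines the fundamental solution uniquely; the symplectic relation $\widehat S(-u)^{T}\eta\,\widehat S(u)=\eta$, with $\eta$ the Poincar\'e pairing on $H^*(\bp^1)$, then serves as a consistency check and collapses to a single Wronskian-type identity relating the $\alpha=0$ row to the already-known $\alpha=1$ row. Equivalently, one may compute the row directly from the classical genus-$0$ two-point descendant invariants of $\bp^1$ and match coefficients, the non-stationary invariants being exactly those that supply the $H_m$. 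The main obstacle is precisely this last step: pinning down the quantum differential equation (the relevant point on the Frobenius manifold, the dilaton shift, and the normalisation) so that the harmonic-number series output by Proposition~\ref{PROPJIU} are shown to be the logarithmic entries of the $\bp^1$ $S$-matrix, with all superscript/subscript, sign and normalisation conventions of \eqref{Srel} consistently matched.
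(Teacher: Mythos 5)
Your reduction to $b=0$ via \eqref{IPP222}, Lemma~\ref{IPP} and the defining relation \eqref{auxdif} is exactly the paper's first step, and your handling of the trivial cases $a\leq b$ is fine. But the heart of the proposition --- showing $\ci^\alpha_j\big[\xi^\beta_0\big]=(S_j)_\alpha^\beta$ for $j>0$, and in particular for the non-stationary row $\alpha=0$ carrying the harmonic numbers --- is not proved in your proposal; it is only planned. You say you \emph{would} identify the generating series built from Proposition~\ref{PROPJIU} with the fundamental solution of the quantum differential equation of $\bp^1$, and you yourself flag as ``the main obstacle'' the task of pinning down that equation, the base point on the Frobenius manifold, the dilaton shift, and the normalisation conventions of \eqref{Srel}. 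That unexecuted matching is precisely the content of the proposition; without it your argument establishes nothing beyond what Proposition~\ref{PROPJIU} already records. (Your appeal to \cite{DBOSS} for the $\alpha=1$ row is legitimate, cf.\ Remark~\ref{ratcont}, but it covers only half the statement.)

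The gap can be closed with far lighter machinery than the QDE, and this is what the paper does. By Kontsevich--Manin \cite{KMaRel} one has the closed formula \eqref{SKM}, $(S_k)_\alpha^\beta=\big\langle\tau_0^{1-\beta}\tau_{k-1}^{\alpha}\big\rangle_{0}$ for $k>0$, so the $S$-matrix entries are themselves genus-zero two-point descendant invariants. Applying the genus-zero topological recursion relation and the divisor equation to $\big\langle\tau_0^1\tau_0^{1-\beta}\tau_{k-1}^\alpha\big\rangle_0$ then yields the recursion
\begin{gather*}
\tfrac12(k+\alpha-\beta)\,(S_k)^{\beta}_\alpha= (S_{k-1} )^{1-\beta}_\alpha-\delta_{\alpha,0} (S_{k-1})^{\beta}_{1-\alpha},
\end{gather*}
which determines all $S_k$ uniquely from $(S_0)_\alpha^\beta=\delta_{\alpha,\beta}$ and $(S_1)_0^1=0$. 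One finishes by checking directly that the table of Proposition~\ref{PROPJIU} satisfies this same recursion with the same initial data --- an elementary manipulation of factorials and harmonic numbers, with no Frobenius-manifold conventions to negotiate. If you want to keep your generating-series formulation, this recursion (not the symplectic condition, which is only a consistency check and cannot single out the logarithmic row) is the concrete substitute for the differential equation you were unable to pin down.
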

Together with Proposition~\ref{PROPJIU}, we find the table of non-zero entries of $S_k$ is
\begin{gather*}
\renewcommand{\arraystretch}{1.6}
\begin{array}{|c|c|c|}
\hline
(S_{k})_{\alpha}^{\beta}& \alpha = 0 & \alpha = 1 \bsep{4pt}\\
\hline
\beta = 0 & \dfrac{1 - 2mH_{2m}}{m!^2}\,\,\,({\rm even}) & \dfrac{1}{m!(m + 1)!}\,\,\,({\rm odd})\tsep{8pt}\bsep{8pt}\\
\hline
\beta = 1 & \dfrac{-2H_{m}}{m!^2}\,\,\,({\rm odd}) & \dfrac{1}{m!^2}\,\,\,({\rm even}) \tsep{8pt}\bsep{8pt}\\
\hline
\end{array}
\renewcommand{\arraystretch}{1.0}
\end{gather*}
where ``even'', respectively ``odd'', means that $k = 2m$, respectively $k = 2m + 1$.

\begin{proof}
By integration by parts \eqref{IPP222} and Lemma~\ref{IPP}, it suffices to prove the result for $\ell = 0$. Kontsevich and Manin \cite{KMaRel} show that for $k>0$
\begin{gather} \label{SKM}
(S_k)_\alpha^\beta=\big\langle\tau_0^{1-\beta}\tau_{k-1}^{\alpha}\big\rangle_{0}.
\end{gather}
The divisor equation and genus 0 topological recursion relations satisfied quite generally by Gromov--Witten invariants \cite{KMaGro} can be used to calculate the right-hand side of \eqref{SKM}. We have
\begin{gather*}
\big\langle\tau_0^1\tau_0^{1-\beta}\tau_{k-1}^\alpha\big\rangle= \big\langle\tau_{k-2}^\alpha\tau_0^\beta\big\rangle\big\langle\tau_0^{1-\beta}\tau_0^1\tau_0^{1-\beta}\big\rangle=\big\langle\tau_{k-2}^\alpha\tau_0^\beta\big\rangle,
\end{gather*}
where the first equality is the genus 0 topological recursion relation and $\big\langle\tau_0^{1-\beta}\tau_0^1\tau_0^{1-\beta}\big\rangle=1$ for $\beta=0$ or 1 gives the second equality. The divisor equation allows one to remove an insertion $\tau_0^1$ and in this case gives
\begin{gather*}
\big\langle\tau_0^1\tau_0^{1-\beta}\tau_{k-1}^\alpha\big\rangle=d\big\langle\tau_0^{1-\beta}\tau_{k-1}^\alpha\big\rangle+\delta_{\alpha,0} \big\langle\tau_0^{1-\beta}\tau_{k-2}^{1-\alpha}\big\rangle,
\end{gather*}
where $d=\frac12(k+\alpha-\beta)$ is the degree. Putting these together and using~\eqref{SKM}, one gets for $k>0$
\begin{gather*}
\tfrac12(k+\alpha-\beta)\,(S_k)^{\beta}_\alpha= (S_{k-1} )^{1-\beta}_\alpha-\delta_{\alpha,0} (S_{k-1})^{\beta}_{1-\alpha},
\end{gather*}
which uniquely determines $S_k$ from $(S_0)_{\alpha}^{\beta}= \delta_{\alpha,\beta}$ and $(S_1)_0^1= 0$. Then, one can check that the values of $\mathcal{I}_{k}^{\alpha}[\xi_{0}^{\beta}]$ given in Proposition~\ref{PROPJIU} satisfy the same recursion with the same initial conditions.
\end{proof}

We can express the $S$-matrix with respect to the odd part of the $(0,2)$-correlator
\begin{gather*}
\omega^{\rm odd}_{0,2}(z_1,z_2)=\frac{1}{2}\big(\omega_{0,2}(z_1,z_2)-\omega_{0,2}(z_1,1/z_2)\big).
\end{gather*}
This quantity is invariant under exchange of $z_1$ and $z_2$ since $\omega_{0,2}(z_1,1/z_2)=\omega_{0,2}(1/z_1,z_2)$.
\begin{corollary} \label{th:Sw2} For any $b > 0$, and $\alpha,\beta \in \{0,1\}$, we have
\begin{gather*}
(S_b)^{\alpha}_\beta=\ci^{\beta}_{b-1}\otimes\ci^{1-\alpha}_0\big[\omega_{0,2}^{\rm odd}\big].
\end{gather*}
\end{corollary}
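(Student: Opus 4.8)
The plan is to evaluate the two functionals one variable at a time, beginning with $\ci^{1-\alpha}_0$ on the second argument, and to recognise the outcome of this first evaluation as the odd initial datum $\xi^{\alpha,\rm odd}_{-1}$ from \eqref{xiodd}. Since $\omega_{0,2}^{\rm odd}(z_1,z_2)$ is symmetric in its two arguments, applying $\ci^{1-\alpha}_0$ to the $z_2$-slot produces the $1$-form $z_1\mapsto\eta^{1-\alpha}_0(z_1)$ in the notation introduced just before Lemma~\ref{etade}. Specialising Lemma~\ref{etade} to $k=0$ gives
\begin{gather*}
\eta^1_0(z) = \dd\left(\frac{z - z^{-1}}{2}\right) = \frac{1}{2}\big(1 + z^{-2}\big)\,\dd z, \qquad \eta^0_0(z) = \dd(\ln z) = \frac{\dd z}{z},
\end{gather*}
and comparing with \eqref{xiodd} one sees these are exactly $\xi^{0,\rm odd}_{-1}$ and $\xi^{1,\rm odd}_{-1}$; in both cases $\eta^{1-\alpha}_0 = \xi^{\alpha,\rm odd}_{-1}$.

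It then remains to apply $\ci^\beta_{b-1}$ and identify the answer with an $S$-matrix coefficient. By Remark~\ref{oddre} the recursion \eqref{auxdif} may be run from the odd datum, so $\xi^\alpha_0 = -\dd\big(\xi^{\alpha,\rm odd}_{-1}/\dd x\big)$. Invoking the integration-by-parts property of the functionals — namely \eqref{IPP222} when $\beta = 1$ and Lemma~\ref{IPP} when $\beta = 0$ — I would then write
\begin{gather*}
\ci^\beta_{b-1}\big[\xi^{\alpha,\rm odd}_{-1}\big] = -\ci^\beta_b\big[\dd\big(\xi^{\alpha,\rm odd}_{-1}/\dd x\big)\big] = \ci^\beta_b\big[\xi^\alpha_0\big],
\end{gather*}
which is valid for $b > 0$. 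Finally Proposition~\ref{Sixi} gives $\ci^\beta_b\big[\xi^\alpha_0\big] = (S_b)^\alpha_\beta$, and assembling the three steps yields $\ci^\beta_{b-1}\otimes\ci^{1-\alpha}_0\big[\omega_{0,2}^{\rm odd}\big] = (S_b)^\alpha_\beta$.

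The hard part will be a well-definedness issue rather than a computation. The two operators act on different variables, but the intermediate $1$-form $\eta^{1-\alpha}_0 = \xi^{\alpha,\rm odd}_{-1}$ has a pole at $z = \infty$ (for $\alpha = 0$), so the second evaluation $\ci^\beta_{b-1}$ lies outside the basic definition \eqref{Iooooo} of the regularised integral and outside the hypotheses of Lemma~\ref{IPP}. This is precisely the situation anticipated in the remark following Lemma~\ref{IPP}: one uses the extension of $\ci^0$ to $1$-forms with poles at $\infty$ that is constructed so as to preserve linearity and integration by parts, which is exactly what the middle display requires. I would therefore spend the bulk of the write-up confirming that the chosen order of evaluation is consistent with this extended functional and that no boundary terms at $z = 0,\infty$ are lost in the passage $\ci^\beta_b[\xi^\alpha_0] = \ci^\beta_{b-1}[\xi^{\alpha,\rm odd}_{-1}]$; once that is granted, the three displayed equalities close the argument.
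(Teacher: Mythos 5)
Your proposal is correct and takes essentially the same route as the paper: the paper likewise evaluates $\ci^{1-\alpha}_0$ on one slot of $\omega_{0,2}^{\rm odd}$ first, identifies the result with $\xi^{\alpha,{\rm odd}}_{-1}$ via \eqref{xiodd}, and then uses Remark~\ref{oddre} together with integration by parts (\eqref{IPP222} and Lemma~\ref{IPP}, understood in the extended sense of the remark following Lemma~\ref{IPP}) and Proposition~\ref{Sixi} to conclude $\ci^{\beta}_{b-1}\big[\xi^{\alpha,{\rm odd}}_{-1}\big]=\ci^{\beta}_{b}\big[\xi^{\alpha}_{0}\big]=(S_b)^{\alpha}_{\beta}$. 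The only inessential difference is how the first evaluation is obtained: you specialise Lemma~\ref{etade} to $k=0$, whereas the paper computes it directly, by a residue at $z=\infty$ for $\ci^1_0$ and by Lemma~\ref{IntL} for $\ci^0_0$.
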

\begin{proof}
For $\alpha = 0$, we compute
\begin{gather*}
\mathcal{I}_{0}^{1}\big[\omega_{0,2}^{{\rm odd}}(\cdot,z_1)\big] = - \Res_{z = \infty} \frac{(z + 1/z)\dd z\,\dd z_1}{2}\left(\frac{1}{(z - z_1)^2} + \frac{1}{(1 - z_1z)^2}\right)\\
\hphantom{\mathcal{I}_{0}^{1}\big[\omega_{0,2}^{{\rm odd}}(\cdot,z_1)\big]}{} = \frac{\big(1 + 1/z_1^{2}\big)\dd z_1}{2} = \xi_{-1}^{0,{\rm odd}}(z_1),
\end{gather*}
by comparison with \eqref{xiodd}. For $\alpha = 1$, we compute using Lemma~\ref{IntL}
\begin{gather*}
\mathcal{I}_{0}^{0}\big[\omega_{0,2}^{{\rm odd}}(\cdot,z_1)\big] = -\frac{\dd z_1}{2} \int_{0}^{\infty} \left(\frac{1}{(z - z_1)^2} + \frac{1}{(zz_1 - 1)^2}\right) = \frac{\dd z_1}{z_1} = \xi_{-1}^{1,{\rm odd}}(z_1).
\end{gather*}
Therefore, for any $\alpha \in \{0,1\}$, the second evaluation of $\mathcal{I}_{b - 1}^{\beta}$ is well-defined and we have
\begin{gather*}
\mathcal{I}_{b - 1}^{\beta} \otimes \mathcal{I}_{0}^{1 - \alpha}\big[\omega_{0,2}^{{\rm odd}}\big] = \mathcal{I}_{b - 1}^{\beta}\big[\xi_{-1}^{\alpha,{\rm odd}}\big].
\end{gather*}
Using Remark~\ref{oddre} and the properties of integration by parts \eqref{IPP222} and Lemma~\ref{IPP}, we get
\begin{gather*}
\mathcal{I}_{b - 1}^{\beta} \otimes \mathcal{I}_{0}^{1 - \alpha}\big[\omega_{0,2}^{{\rm odd}}\big] = \mathcal{I}_{b}^{\beta}\big[\xi_{0}^{\alpha}\big],
\end{gather*}
which is equal to $(S_{b})_{\beta}^{\alpha}$ according to Proposition~\ref{Sixi}.
\end{proof}

\subsection{The stable cases}

The cases $2g - 2 + n > 0$ of Theorem~\ref{insertions} can be deduced purely by linear algebra starting from Theorem~\ref{th:dboss} -- due to \cite{DBOSS} -- and Proposition~\ref{Sixi}. Indeed,
\begin{align*}
\bigotimes_{i = 1}^n \mathcal{I}_{a_i}^{\alpha_i} [\omega_{g,n}] & = \bigotimes_{i = 1}^n \mathcal{I}_{b_i}^{\alpha_i}\bigg[\sum_{\substack{k_1,\ldots,k_n \geq 0 \\ \beta_1,\ldots,\beta_n \in \{0,1\}}} \bigg\langle\prod_{i=1}^n\overline{\tau}_{k_i}^{\beta_i}\bigg\rangle_g\,\bigotimes_{i=1}^n\xi_{k_i}^{\beta_i}\bigg] \\
& = \sum_{\substack{k_1,\ldots,k_n \geq 0 \\ \beta_1,\ldots,\beta_n \in \{0,1\}}} \bigg\langle\prod_{i=1}^n\overline{\tau}_{k_i}^{\beta_i}\bigg\rangle_g\,\prod_{i=1}^n\mathcal{I}_{a_i}^{\alpha_i}\big[\xi_{k_i}^{\beta_i}\big] \\
&=\sum_{\substack{k_1,\ldots,k_n \geq 0 \\ \beta_1,\ldots,\beta_n \in \{0,1\}}} \bigg\langle\prod_{i=1}^n\overline{\tau}_{k_i}^{\beta_i}\bigg\rangle_g \prod_{i=1}^n (S_{a_i-k_i})_{\alpha_i}^{\beta_i} \\
&= \bigg\langle\prod_{i=1}^n\tau_{b_i}^{\alpha_i}\bigg\rangle_g,
\end{align*}
where the first equality uses Theorem~\ref{th:dboss} and the final equality uses \eqref{Srel}.

\subsection[The $(0,1)$ cases]{The $\boldsymbol{(0,1)}$ cases}\label{S63}
We can prove the $(0,1)$ case of Theorem~\ref{insertions} in the following way. We use the string equation to represent $\big\langle \tau_b^{\beta}\big\rangle_{0} = \big\langle \tau_{0}^{0} \tau_{b + 1}^{\beta}\big\rangle_{0}$. This is equal to $(S_{b + 2})_{\beta}^1$ according to \eqref{SKM}, hence equal to $\mathcal{I}_{b + 2}^{\beta}\big[\xi_{0}^1\big]$ owing to Corollary~\ref{th:Sw2}. Using the integration by parts~\eqref{IPP222} and Lemma~\ref{IPP} it is also $\mathcal{I}_{b + 1}^{\beta}\big[\xi_{-1}^{1,{\rm odd}}\big] = \mathcal{I}_{b + 1}^{\beta}[\dd z/z]$, which is the formula we sought for. According to Proposition~\ref{PROPJIU}, the table of values is
\begin{gather*}
\renewcommand{\arraystretch}{1.6}
\begin{array}{|c|c|c|}
\hline \big\langle \tau_{b}^{\beta}\big\rangle_0 & \beta = 0 & \beta = 1\bsep{4pt} \\
\hline
b = 2m & 0 & \dfrac{1}{(m + 1)!^2} \tsep{8pt}\bsep{8pt}\\
\hline
b = 2m + 1 & -\dfrac{2H_{m + 1}}{(m + 1)!^2} & 0 \tsep{8pt}\bsep{8pt}\\
\hline
\end{array}
\renewcommand{\arraystretch}{1.0}
\end{gather*}

\subsection[The $(0,2)$ case]{The $\boldsymbol{(0,2)}$ case}\label{S64}
Likewise, Corollary~\ref{th:Sw2} gives the $(0,2)$ case of Theorem~\ref{insertions} with one insertion of degree~0 since via~\eqref{SKM} it can be restated as
\begin{gather} \label{(0,2)}
\big\langle\tau_{b}^{\beta}\tau_0^{\alpha}\big\rangle_0 =\ci^{\beta}_{b}\otimes\ci^{\alpha}_0\big[\omega^{\rm odd}_{0,2}\big].
\end{gather}
The proof of Theorem~\ref{insertions} is completed through the following proposition.
\begin{proposition}
\begin{gather*}
\big\langle\tau^\alpha_{j}\tau^\beta_k\big\rangle_0=\ci^\alpha_{j}\otimes\ci^\beta_k\big[\omega_{0,2}^{\rm odd}\big].
\end{gather*}
\end{proposition}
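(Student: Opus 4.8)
The plan is to bootstrap from the case already settled in \eqref{(0,2)}, namely $\langle \tau_b^\beta \tau_0^\alpha\rangle_0 = \ci_b^\beta \otimes \ci_0^\alpha[\omega_{0,2}^{\rm odd}]$, in which one of the two insertions carries descendant degree $0$. Both sides of the proposition are symmetric under the simultaneous exchange $(j,\alpha)\leftrightarrow(k,\beta)$ -- the left-hand side tautologically and the right-hand side because $\omega_{0,2}^{\rm odd}$ is symmetric in its two arguments -- so it is enough to treat the three type-pairs $(\alpha,\beta)\in\{(1,1),(0,1),(0,0)\}$, applying the two functionals in whichever order is convenient. Collapsing the second variable with the functions of Lemma~\ref{etade}, I would write
\begin{gather*}
\ci_j^\alpha \otimes \ci_k^\beta\big[\omega_{0,2}^{\rm odd}\big] = \ci_j^\alpha\big[\eta_k^\beta\big],
\end{gather*}
reducing the statement to the evaluation of $\ci_j^\alpha$ on the explicit $1$-forms $\eta_k^\beta$ furnished by Lemma~\ref{etade}.

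The second step is to evaluate $\ci_j^\alpha[\eta_k^\beta]$ case by case. When $\beta = 1$, Lemma~\ref{etade} presents $\eta_k^1$ as the differential of an explicit odd Laurent polynomial, so the evaluation is an elementary residue (if $\alpha=1$) or a convergent regularised integral (if $\alpha = 0$) of a rational $1$-form, computable directly. When $\beta = 0$, Lemma~\ref{etade} produces in addition the term $\dd_{z_0}\big(\tfrac{x(z_0)^k}{k!}\ln z_0\big)$, which carries a logarithmic singularity at $\infty$; this is precisely the situation the regularised functional $\ci^0$ and its integration-by-parts rule were built to handle. Here I would use Lemma~\ref{IPP} and \eqref{IPP222} to remove the exact-differential structure, and Proposition~\ref{xaction} to move the prefactor $x^k$ past the operator, so that every contribution collapses onto the tabulated values $\ci_\ell^\gamma[\xi_0^\delta]$ of Proposition~\ref{PROPJIU}. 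The outcome is a closed expression for the right-hand side in terms of factorials and harmonic numbers.

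The third step is to produce the matching closed form for $\langle \tau_j^\alpha \tau_k^\beta\rangle_0$. Following the same strategy as in the proof of Proposition~\ref{Sixi} -- introducing an auxiliary $\tau_0^1$ insertion, applying the divisor and string equations and the genus-zero topological recursion relations, and solving for the two-point invariant -- these descendant invariants are expressed through the $S$-matrix entries $(S_m)_\gamma^\delta = \langle \tau_0^{1-\delta}\tau_{m-1}^\gamma\rangle_0$ recorded in \eqref{SKM} and evaluated by Proposition~\ref{Sixi}. Comparing this with the expression obtained in the previous step, with \eqref{(0,2)} serving as the base case, completes the proof.

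The main obstacle is the non-stationary insertion, i.e.\ the cases $\beta = 0$ (and by symmetry $\alpha = 0$). There the logarithm appearing in $\eta_k^0$ genuinely requires the regularised integral $\ci^0$ rather than a naive contour integral, the factor $x^k$ multiplying $\ln z_0$ must be carried through Proposition~\ref{xaction}, and the two closed forms then coincide only after a harmonic-number identity of the same flavour as the one established in Lemma~\ref{Lemlm}. By contrast the stationary case $(\alpha,\beta)=(1,1)$ needs no regularisation and follows at once from the residue computation.
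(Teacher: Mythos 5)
Your plan is a genuinely different route from the paper's, and its central computational step has a real gap. For contrast: the paper never evaluates $\ci^\alpha_j\big[\eta_k^\beta\big]$ in closed form at all. It derives, by a residue computation, the bidifferential identity \eqref{unberg},
\begin{gather*}
-\dd_1\left(\frac{\omega^{\rm odd}_{0,2}(z_1,z_2)}{\dd x(z_1)}\right)-\dd_{2}\left(\frac{\omega_{0,2}^{\rm odd}(z_1,z_2)}{\dd x(z_2)}\right)=\xi_0^0(z_1)\xi_0^1(z_2)+\xi_0^1(z_1)\xi_0^0(z_2),
\end{gather*}
applies $\ci^\alpha_j\otimes\ci^\beta_k$ to it using only integration by parts and Proposition~\ref{Sixi}, and obtains the recursion \eqref{2ptprim}; the string equation and the genus-zero topological recursion relation show that $\big\langle\tau_j^\alpha\tau_k^\beta\big\rangle_0$ obeys the identical recursion, and the two sides share the base case \eqref{(0,2)}. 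No closed forms, no harmonic-number identities, and no evaluation of $\ci^0$ on singular forms are ever needed.

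The gap in your proposal is the claim that Lemma~\ref{IPP}, \eqref{IPP222} and Proposition~\ref{xaction} collapse $\ci^\alpha_j\big[\eta^\beta_k\big]$ onto the table of Proposition~\ref{PROPJIU}. Proposition~\ref{xaction} evaluates the operators on $\frac{x^k}{k!}\xi_m^\alpha$ with $m\geq 0$, i.e., on forms whose poles sit at $z=\pm 1$; but the forms furnished by Lemma~\ref{etade} --- the exact differentials of the Laurent monomials $z_0^{c+1}-z_0^{-(c+1)}$ and of $\frac{x(z_0)^k}{k!}\ln z_0$ --- have all their singularities at $z_0=0,\infty$, so they are not linear combinations of the $x^k\xi_m^\alpha$, and the cited results do not apply to them. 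This bites hardest in the doubly non-stationary case $(\alpha,\beta)=(0,0)$, which symmetry cannot help you dodge: there you must apply $\ci^0_j$ to a form with logarithmic singularities at both points above $x=\infty$, and the definition \eqref{I^0} is not even defined on such a form, since it presupposes finite residues $\ci^1_a[f]$, i.e., meromorphic Laurent coefficients. (Nor is $\ci^0_j\big[\eta^1_k\big]$ a ``convergent regularised integral'': $\eta^1_k$ grows polynomially at $0,\infty$, so the extension-by-integration-by-parts is mandatory there as well.) The plan can probably be rescued --- iterating $f\mapsto-\dd(f/\dd x)$ raises the index of $\ci^0$ while lowering the degree of the log prefactor, so after $k+1$ steps $\dd\big(x^k\ln z_0\big)$ becomes meromorphic and eventually lands in the $\xi$ span, and supplements to Proposition~\ref{xaction} for the forms $x^m\,\dd z/z$ and $x^m\ln z\,\dd x$ can be derived by similar recursions --- but setting up this extension, checking its consistency, and proving those supplementary evaluations is precisely the missing content, and it is the bulk of the work. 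After that you would still need closed forms for $\big\langle\tau_j^\alpha\tau_k^\beta\big\rangle_0$ and a nontrivial matching identity of Lemma~\ref{Lemlm} type. The paper's recursion argument is engineered exactly to avoid all of this.
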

\begin{proof}By direct evaluation, we have for $a \in \{-1,1\}$
\begin{gather*}
\Res_{z= a}\frac{\omega_{0,2}(z_1,z)\omega_{0,2}(z,z_ 2)}{\dd x(z)}= \frac{a}{2}\,\frac{\dd z_1}{(z_1- a)^2}\,\frac{\dd z_2}{(z_2 - a)^2},
\end{gather*}
and we sum these to get
\begin{gather*}
\sum_{a = \pm 1} \Res_{z=a}\frac{\omega_{0,2}(z_1,z)\omega_{0,2}(z,z_ 2)}{\dd x(z)}=\xi_0^0(z_1)\xi_0^1(z_2)+\xi_0^1(z_1)\xi_0^0(z_2),
\end{gather*}
where $\xi_0^\beta(z)$, defined in \eqref{auxdif}, have order two poles at $z=\pm1$. Now
\begin{align*}
\sum_{a = \pm 1} \Res_{z=a}\frac{\omega_{0,2}(z_1,z)\omega_{0,2}(z,z_2)}{\dd x(z)}&=
-\Res_{z=z_1}\frac{\omega_{0,2}(z_1,z)\omega_{0,2}(z,z_2)}{\dd x(z)}-\Res_{z=z_2}\frac{\omega_{0,2}(z_1,z)\omega_{0,2}(z,z_2)}{\dd x(z)}\\
& =-\dd_1\left(\frac{\omega_{0,2}(z_1,z_2)}{\dd x(z_1)}\right)-\dd_{2}\left(\frac{\omega_{0,2}(z_1,z_2)}{\dd x(z_2)}\right),
\end{align*}
where the first equality uses the fact that the only poles of the integrand are $1$, $-1$, $z_1$, $z_2$. Putting these together gives
\begin{gather}
\label{unberg0}
-\dd_1\left(\frac{\omega_{0,2}(z_1,z_2)}{\dd x(z_1)}\right)-\dd_{2}\left(\frac{\omega_{0,2}(z_1,z_2)}{\dd x(z_2)}\right)=\xi_0^0(z_1)\xi_0^1(z_2)+\xi_0^1(z_1)\xi_0^0(z_2).
\end{gather}
Take $\frac12\eqref{unberg0}-\frac12\eqref{unberg0}|_{z_2\mapsto 1/z_2}$ and since $\xi^\alpha_0(z_2)$ is odd under this involution, we get
\begin{gather} \label{unberg}
-\dd_1\left(\frac{\omega^{\rm odd}_{0,2}(z_1,z_2)}{\dd x(z_1)}\right)-\dd_{2}\left(\frac{\omega_{0,2}^{\rm odd}(z_1,z_2)}{\dd x(z_2)}\right)=\xi_0^0(z_1)\xi_0^1(z_2)+\xi_0^1(z_1)\xi_0^0(z_2).
\end{gather}
We apply $\ci^\alpha_j\otimes\ci^\beta_k$ to \eqref{unberg} to get
\begin{gather*}
\ci^\alpha_j\otimes\ci^\beta_k\left[
-\dd_1\left(\frac{\omega_{0,2}^{\rm odd}(z_1,z_2)}{\dd x(z_1)}\right)-\dd_{2}\left(\frac{\omega_{0,2}^{\rm odd}(z_1,z_2)}{\dd x(z_2)}\right)\right]\\
\qquad{} =\ci^\alpha_j\otimes\ci^\beta_k\big[\xi_0^0(z_1)\xi_0^1(z_2)+\xi_0^1(z_1)\xi_0^0(z_2)\big],
\end{gather*}
hence
\begin{gather}
 \label{2ptprim}
\ci^\alpha_{j-1}\otimes\ci^\beta_k\big[\omega_{0,2}^{\rm odd}\big]+\ci^\alpha_{j}\otimes\ci^\beta_{k-1}\big[\omega_{0,2}^{\rm odd}\big]=\big\langle\tau_0^1\tau_{j-1}^\alpha\big\rangle_0\big\langle\tau_0^0\tau_{k-1}^\beta\big\rangle_0+ \big\langle\tau_0^0\tau_{j-1}^\alpha\big\rangle_0\big\langle\tau_0^1\tau_{k-1}^\beta\big\rangle_0,
\end{gather}
where on the left-hand side we have used integration by parts, and on the right-hand side we have used $\ci^\alpha_j\big[\xi^\beta_0(z)\big]=\big\langle\tau^{1-\beta}_0\tau_{j-1}^\alpha\big\rangle_{0}$ from Proposition~\ref{Sixi} and~\eqref{SKM}. Notice that \eqref{2ptprim} determines $\ci^\alpha_{j}\otimes\ci^\beta_k\big[\omega_{0,2}^{\rm odd}\big]$ inductively from the case $j=0$ and the two-points genus zero descendant Gromov--Witten invariants with one primary insertion (degree $0$) which appear on the right-hand side of~\eqref{2ptprim}. The $j=0$ case has already been shown in \eqref{(0,2)} to give genus zero descendant Gromov--Witten invariants.

The genus zero descendant Gromov--Witten invariants satisfy
\begin{gather*}
\big\langle\tau^\alpha_{j-1}\tau^\beta_k\big\rangle_0+\big\langle\tau^\alpha_{j}\tau^\beta_{k-1}\big\rangle_0= \big\langle\tau^0_0\tau^\alpha_{j}\tau^\beta_k\big\rangle_0= \big\langle\tau_0^1\tau_{j-1}^\alpha\big\rangle_0\big\langle\tau_0^0\tau_{k-1}^\beta\big\rangle_0+ \big\langle\tau_0^0\tau_{j-1}^\alpha\big\rangle_0\big\langle\tau_0^1\tau_{k-1}^\beta\big\rangle_0,
\end{gather*}
where the first equality is the string equation and the second equality is the genus zero topological recursion relation together with the string equation. Hence the two-points genus zero descendant Gromov--Witten invariants are also determined inductively from the two-points genus zero descendant Gromov--Witten invariants with one primary insertion, via the relation analogous to~\eqref{2ptprim}. So we conclude
\begin{gather*}
\big\langle\tau^\alpha_{j}\tau^\beta_k\big\rangle_0=\ci^\alpha_{j}\otimes\ci^\beta_k\big[\omega_{0,2}^{\rm odd}\big]
\end{gather*}
as required.
\end{proof}

\section[New proof of global Virasoro constraints for $\bp^1$]{New proof of global Virasoro constraints for $\boldsymbol{\bp^1}$}\label{S7}

The Virasoro constraints in Theorem~\ref{th:vir} allow the removal of non-stationary insertions so that the stationary invariants determine the non-stationary insertions.

\subsection{Decay rules}\label{Sdecay}
Okounkov and Pandharipande view the global Virasoro constraints as the decay of non-statio\-na\-ry insertions which are considered to be unstable. The decay rules for~$L_{k}$ defined in \eqref{viror} are as follows:
\begin{gather}
\tau_{k+1}^0\tau_\ell^0 \mapsto\binom{k+\ell}{\ell-1}\tau_{k+\ell}^0, \label{D1}\\
\tau_{k+1}^0\tau_\ell^0 \mapsto2\binom{k+\ell}{\ell-1}(H_{k+\ell}-H_{\ell-1})\tau_{k+\ell-1}^1, \label{D2}\\
\tau_{k+1}^0\tau_\ell^1\mapsto\binom{k+\ell+1}{\ell}\tau_{k+\ell}^1, \label{D3}\\
\tau_{k+1}^0 \mapsto-2H_{k+1}\tau_{k}^1, \label{D4}\\
\tau_{k+1}^0 \mapsto\frac{\hbar}{k+1}\sum_{m=0}^{k-2}\binom{k}{m+1}^{-1}\tau_m^1\tau_{k-m-2}^1. \label{D5}
\end{gather}
This means that we sum over interactions of $\tau_{k+1}^0$ with all other insertions.
For example, when $k=-1$, \eqref{D1} and \eqref{D3} become
$\tau_0^0\tau_\ell^\alpha\mapsto\tau_{\ell-1}^\alpha$ and \eqref{D2}, \eqref{D4} and \eqref{D5} produce zero -- where we use the convention that a sum vanishes if its upper terminal is negative. Summing over these contributions, we see that $L_{-1}$ produces the string equation:
\begin{gather*}
\bigg\langle\tau_0^0\prod_{i=1}^n\tau_{a_i}^{\alpha_i}\bigg\rangle_g = \sum_{j=1}^n\big\langle\tau_{a_1}^{\alpha_1}\cdots\tau_{a_j-1}^{\alpha_j}\cdots\tau_{a_n}^{\alpha_n}\big\rangle_g.
\end{gather*}

The application of the operators to $\omega_{0,1}$ and $\omega_{0,2}$ was computed in Sections~\ref{S63}--\ref{S64} and their values can be checked to satisfy these decay rules. In fact, the local constraints \eqref{qgns} for $(g,n) = (0,1)$ and $(0,2)$ have an empty content -- one rather considers the local constraints for \smash{$2g - 2 + n > 0$} for given $(\omega_{0,1},\omega_{0,2})$, which determine uniquely the $\omega_{g,n}$ for $2g - 2 + n > 0$. We are now going to prove from the local constraints~\eqref{qgns} for $2g - 2 + n > 0$ that the global Virasoro constraints are satisfied.

\subsection{Proof of Theorem~\ref{th:vir}}

The cases $(0,3)$ and $(1,1)$ are treated separately below, so we assume $2g - 2 + n \geq 2$. The proof is achieved by applying $\bigotimes_{i=1}^n\ci^{\alpha_i}_{b_i}$ to both sides of the identity in Lemma~\ref{Vircomp}. It can be rewritten
\begin{gather}
\mathcal{A}\big[\omega_{g,n}(\cdot,\zz_I)\big](x_1)\,\dd x_1 + 2\ln(x(z_1)/z_1)\,\omega_{g,n}(z_1,\zz_I) \nonumber\\
\qquad{} = \sum_{i = 2}^n \dd_i\left(\frac{\dd x_1}{x_1 - x_i}\,\frac{\omega_{g,n - 1}(\zz_{I})}{\dd x_i}\right)
 + \frac{\omega_{g - 1,n + 1}(z_1,z_1,\zz_I) }{\dd x_1} \nonumber\\
 \qquad\quad{} + \sum_{\substack{h + h' = g \\ J \sqcup J' = I}}^{\circ} \frac{\omega_{h,1 + |J|}(z_1,\zz_J)\omega_{h',1 + |J'|}(z_1,\zz_{J'})}{\dd x_1} + \frac{2\tilde{\omega}_{0,1}(z_1)\omega_{g,n}(z_1,\zz_I)}{\dd x_1} \nonumber\\
\qquad{} = \sum_{i = 2}^n \mathcal{L}[\omega_{g,n - 1}(\cdot,\zz_I)](z_1)
 + \frac{\omega_{g - 1,n + 1}(z_1,z_1,\zz_I)}{\dd x_1} \nonumber\\
 \qquad\quad{} + \sum_{\substack{h + h' = g \\ J \sqcup J' = I}}^{\circ\circ} \frac{\omega_{h,1 + |J|}(z_1,\zz_J)\omega_{h',1 + |J'|}(z_1,\zz_{J'})}{\dd x_1} + \frac{2\tilde{\omega}_{0,1}(z_1)\omega_{g,n}(z_1,\zz_I)}{\dd x_1}.\label{diffdecay}
\end{gather}
We have added artificially the term containing $\tilde{\omega}_{0,1} = \ln(x/z)\dd x$ on both sides so as to exploit Corollary~\ref{LEPTH}.

Fix $\alpha_1=1$, $b_1=k$ and allow $\alpha_i$ and $b_i$ to be arbitrary for $i>1$. We first apply $\bigotimes_{i = 2}^n \mathcal{I}_{b_i}^{\alpha_i}$, and then $\mathcal{I}_{k}^1$ to the first variable $z_1$. We claim that the evaluation of $\bigotimes_{i=1}^n\ci^{\alpha_i}_{b_i}$ on the right-hand side of \eqref{diffdecay} is in fact independent of the order in which we evaluate each $\ci^{\alpha_i}_{b_i}$. It is not obvious for the terms involving $\mathcal{L}$, but the decomposition proved in Lemma~\ref{Ltransform} shows that the operators $\ci^1_{b_1}$, respectively $\ci^{\alpha_i}_{b_i}$, naturally act on the variable $z_1$, respectively the variable $z_i$, in $\cl\big(\xi_m^\alpha\big)(z_1,z_i)$ and they commute.

Applying $\otimes_{i = 2}^n \mathcal{I}_{b_i}^{\alpha_i}$ on the left-hand side of \eqref{diffdecay} gives a $1$-form in the variable $x_1$, to which we can apply $\mathcal{I}_{k}^1$ using Corollary~\ref{LEPTH}{\samepage
\begin{gather}
 \bigotimes_{i = 1}^n \mathcal{I}_{b_i}^{\alpha_i} \big(\mathcal{A}\big[\omega_{g,n}(\cdot,\zz_I)\big](x_1)\,\dd x_1+2\omega_{g,n}(z_1,\zz_{I})\ln(x_1/z_1)\big) \nonumber\\
 \qquad{} = \bigg\langle \tau_{k + 1}^0 \prod_{i=2}^n\tau^{\alpha_i}_{b_i}\bigg\rangle_g + 2H_{k+1 }
 \bigg\langle\tau_{k}^1\prod_{i=2}^n\tau^{\alpha_i}_{b_i}\bigg\rangle_g,\label{evalog}
\end{gather}
which reproduces the decay rule \eqref{D4}.}

The decay rule \eqref{D5} means that one inserts $\tau_m^1\tau_{k-m-2}^1$ into a $(g-1,n+2)$ correlator or into the product of $(g_1,n_1+1)$ and $(g_2,n_2+1)$ correlators for $g_1+g_2=g$ and $n_1+n_2=n$. This is reproduced by applying first $\mathcal{I}_{k}^1$, then $\otimes_{i = 2}^n \mathcal{I}_{b_i}^{\alpha_i}$ to the second line of \eqref{diffdecay} as follows. Note that $\tilde{\omega}_{0,1}(z)= \ln(x/z)\,\dd x$ is analytic at $z=\infty$, and we use $\mathcal{I}_{k}^1[\tilde{\omega}_{0,1}](z) =\langle\tau^1_k\rangle_0$ to get the $(0,1)$ contribution $ \langle\tau^1_k\rangle_0$ arising from \eqref{D5}.
The action of $\ci^1_k$ uses only the expansion at $x_1=\infty$. Hence from the second line of \eqref{diffdecay} we can write the insertions as follows:
\begin{gather*}
\sum_{i \geq 0} \tau_i^1 \frac{(i+1)!}{x_1^{i + 2}}\cdot\sum_{j \geq 0} \tau_j^1 \frac{(j+1)!}{x_1^{j+ 2}}=
\sum_{k \geq 2} \sum_{m=0}^{k-2} \tau_m^1\tau_{k-m-2}^1\frac{(m+1)!(k-m-1)!}{x_1^{k+2}},
\end{gather*}
thus we have
\begin{gather*}
\bigg\langle\tau^0_{k+1}\prod_{i=2}^n\tau^{\alpha_i}_{b_i}\bigg\rangle_g
 =-2H_{k+1}\bigg\langle\tau^1_{k}\prod_{i=2}^n\tau^{\alpha_i}_{b_i}\bigg\rangle_g
+ \bigotimes_{i=1}^n\ci^{\alpha_i}_{b_i}\cdot\sum_{j=2}^n \dd_j\left\{\frac{\omega_{g,n - 1}(\zz_{I})}{(x_1 - x_j)\,\dd x_j}\right\} \\
\qquad{} +\frac{1}{k+1}\sum_{m=0}^{k-2}\binom{k}{m+1}^{-1}\Bigg[\big\langle\tau_{m}^1\tau_{k-m-2}^1\tau_{\mathbf{b}_I}^{\boldsymbol{\alpha}_I} \big\rangle_{g-1}+\sum_{\substack{h+h'=g \\ J \sqcup J'=I}} \big\langle\tau_m^1\tau_{\mathbf{b}_I}^{\boldsymbol{\alpha}_I}\big\rangle_h \big\langle\tau_{k-m-2}^1\tau_{\mathbf{b}_J}^{\boldsymbol{\alpha}_J}\big\rangle_{h'}\Bigg].
\end{gather*}

We will now deal with the remaining unevaluated term which is the first term on the right-hand side of \eqref{diffdecay}. For the $i$th summand, we first apply $\otimes_{j \neq 1,i} \mathcal{I}_{b_j}^{\alpha_j}$, then $\mathcal{I}_{b_1}^{\alpha_1} = \mathcal{I}_k^1$, and finally $\mathcal{I}_{b_i}^{\alpha_i}$. In the process we use the fact that $\omega_{g,n - 1}(\zz_I)$ is a linear combination of $\xi_m^\alpha(z_i)$ with coefficients given by differentials in $(z_j)_{j \neq 1,i}$, and the following computation, which we are going to use for $\beta = \alpha_i$ and $j = b_i$
\begin{gather}
 \ci^\beta_j\ci^1_k\left\{-\dd_i\left(\frac{\dd x_1}{x_1 - x_i} \frac{\xi_m^{\alpha}(z_i)}{\dd x_i}\right)\right\}
=-\ci^\beta_j\,\dd_i\left(\frac{x_i^{k+1}}{(k+1)!}\frac{\xi_m^\alpha(z_i)}{\dd x_i}\right) = \ci^\beta_{j-1}\left(\frac{x_i^{k+1}}{(k+1)!}\xi_m^\alpha(z_i)\right) \nonumber\\
\qquad{}= \binom{j+k+\beta}{k+1} \ci^\beta_{j+k}\big(\xi_m^\alpha\big) + 2\delta_{0,\beta}\binom{j+k}{k+1}(H_{j + k} - H_{j - 1})\ci^1_{j+k-1}\big(\xi_m^\alpha\big) \nonumber\\
\qquad{}=\binom{j+k+\beta}{k+1} (S_{j+k-m})_\beta^\alpha + 2\delta_{0,\beta}\binom{j+k}{k+1}(H_{j + k} - H_{j - 1})(S_{j+k-m-1})_1^\alpha \nonumber\\
\qquad{}=\binom{j+k+\beta}{k+1} \big\{(S_{j+k-m})_\beta^\alpha + 2\delta_{0,\beta}(H_{j+k}-H_{j-1}) (S_{j+k-m-1})_1^\alpha\big\}.\label{bidifferentialdecay}
\end{gather}

The first equality transforms a differential with poles at $z_2=1,-1,z_1,1/z_1$ to a differential with poles only at $z_2=1,-1$. This is achieved by evaluating $\ci^1_k$ first, which crucially depends on Lemma~\ref{Ltransform} which guarantees that the operators $\ci^{\alpha_i}_{b_i}$ and $\ci^1_k$ commute when applied to \eqref{diffdecay}. Note that they do not commute when applied to individual terms like $\dd_i\big(\frac{\xi_m^{\alpha}(z_i)}{(x_1 - x_i)\dd x_i}\big)$ and terms involving $\omega_{0,2}$. We can nevertheless use linearity and apply $\ci^1_k$ first to those special terms in~\eqref{diffdecay}. The third equality in~\eqref{bidifferentialdecay} uses Proposition~\ref{xaction}.

We see in \eqref{bidifferentialdecay} that a single term appears on the right-hand side when $\beta=1$ which we will see corresponds to the decay rule \eqref{D3}, whereas two terms appear on the right-hand side when $\beta=0$, which we will see corresponds to the decay rules \eqref{D1} and \eqref{D2}. Indeed, by \cite{DBOSS}, the coefficient of~$\xi_m^\alpha(z_i)$ (in $\omega_{g,n - 1}(\zz_I)$ in the case of concern here) gives the insertion of $\bar{\tau}_m^\alpha$. Hence~\eqref{bidifferentialdecay} proves the decay rules~\eqref{D1}, \eqref{D2} and \eqref{D3} which are given in terms of ancestor invariants via~\eqref{Srel}
\begin{align*}
\tau_{k+1}^0\tau_j^0&\mapsto\binom{j+k}{k+1}\big(\tau_{j+k}^0+2(H_{j+k}-H_{j-1})\tau_{j+k-1}^1\big)\\
&=\sum_{m,\alpha}\binom{j+k}{k+1}\big((S_{j+k-m})^{\alpha}_0+2(H_{j+k}-H_{j-1})(S_{j+k-1-m})^{\alpha}_1\big)\overline{\tau}_m^{\alpha}.
\end{align*}

 \textbf{$\boldsymbol{(1,1)}$ case.}
For the $(1,1)$ case, we apply $\ci^1_{k}$ to the identity proven in Lemma~\ref{Vircomp}:
\begin{gather*}
\mathcal{A}[\omega_{1,1}](x_1)\dd x_1 + \omega_{0,2}(z_1,1/z_1) = 0.
\end{gather*}
This case essentially uses the argument above, with a minor variation to deal with the term $ \omega_{0,2}(z_1,1/z_1)$. We use~\eqref{evalog} to produce the decay rule~\eqref{D4}. To get the decay rule~\eqref{D5}, we have
\begin{align*}
\Omega_{0,2}(x_1,x_2) & =\sum_{b_1,b_2 \geq 0}\big\langle \tau_{b_1}^1\tau_{b_2}^1 \big\rangle_0 \frac{(b_1+1)!}{x_1^{b_1 + 2}}\frac{(b_2+1)!}{x_2^{b_2 + 2}}\dd x_1 \dd x_2 \\
& \sim \omega_{0,2}(z_1,z_2)-\frac{\dd x_1\dd x_2}{(x_1-x_2)^2} =-\omega_{0,2}(z_1,1/z_2).
\end{align*}
Hence $\omega_{0,2}(z_1,1/z_1)\sim-\Omega_{0,2}(x_1,x_1)$ and the action of $\ci^1_k$ uses only the expansion at $x_1=\infty$, so as described above this yields the decay rule \eqref{D5}.

\textbf{$\boldsymbol{(0,3)}$ case.} According to Lemma~\ref{Vircomp}, we have
\begin{gather*}
\mathcal{A}[\omega_{0,3}(\cdot,z_2,z_3)](x_1)\dd x_1 = \dd_2\left(\frac{\dd x_1}{x_1 - x_2}\,\frac{\omega_{0,2}^{\rm odd}(z_2,z_3)}{\dd x_2}\right) + \dd_3\left(\frac{\dd x_1}{x_1 - x_3}\,\frac{\omega_{0,2}^{\rm odd}(z_2,z_3)}{\dd x_3}\right)\\
\hphantom{\mathcal{A}[\omega_{0,3}(\cdot,z_2,z_3)](x_1)\dd x_1 =}{} - \frac{2\omega_{0,2}^{\rm odd}(z_1,z_2)\omega_{0,2}^{\rm odd}(z_1,z_3)}{\dd x_1}.
\end{gather*}
Using similar methods to those above, we can identify each decay rule by applying $\ci^1_k\otimes\ci^{\alpha_2}_{b_2}\otimes\ci^{\alpha_3}_{b_3}$ to this relation. We omit the details, since the computation of the $(0,3)$ descendant invariants follow easily from the genus 0 topological recursion relations~\cite{KMaGro}.

\appendix

\section{Evaluation on invariant differentials}

In the appendix, we evaluate the action of $\mathcal{I}_{k}^0$ on differentials invariant under the involution $z\mapsto 1/z$. Although this is not used in the article, we include it to give a fuller understanding of the operator $\ci^0_k$.

The operators $\ci^0_k$ are regularised integrals along a contour between the two points above $x=\infty$, given by $z=0$ and $z=\infty$. Consider such an integral applied to a meromorphic $1$-form~$f$ on $\mathcal{S}$ invariant under the involution, \textit{i.e.} satisfying $f(z) = f(1/z)$, hence obtained as a pullback of a meromorphic $1$-form from the $x$-plane $\check{\mathcal{S}}$. It would be reasonable to expect that such an integral would vanish since the contour downstairs seems to be closed. Here we will see that $\ci^0_k$ applied to the pullback of a meromorphic $1$-form from the $x$-plane can in fact be non-zero.

The pullback of a meromorphic $1$-form from the $x$-plane can always be obtained by integrating
\begin{gather*}
\check{B}(x,x_0) = \frac{\dd x\,\dd x_0}{(x - x_0)^2}
\end{gather*}
with respect to $x_0$ on a suitable current in $\check{\mathcal{S}}$. Therefore, the evaluation of the operators on meromorphic $1$-forms is determined by their evaluation on $\check{B}(\cdot,x_0)$. The evaluation of $\mathcal{I}^1$ is straightforward. We describe here the evaluation of $\mathcal{I}^0$.

\begin{proposition}
Let $x_0 \in \mathbb{C} \setminus {\rm i}\mathbb{R}$. We have
\begin{gather*}
\mathcal{I}_{0}^0\bigg[\frac{\check{B}(\cdot,x_0)}{\dd x_0}\bigg] = \frac{2}{x_0},
\end{gather*}
and for $b > 0$
\begin{gather*}
\mathcal{I}_{b}^0\bigg[\frac{\check{B}(\cdot,x_0)}{\dd x_0}\bigg] = \frac{x_0^{b - 1}}{(b - 1)!}\,\ln\big(x_0^2\big) + \frac{2x_0^{b - 1}}{b!}\,(1 - bH_{b}).
\end{gather*}
\end{proposition}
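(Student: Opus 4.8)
\textbf{Setup and the $\mathcal{I}^1$-values.}
The plan is to feed $f := \check{B}(\cdot,x_0)/\dd x_0 = \dd x/(x-x_0)^2$, regarded as a $1$-form on $\mathcal{S}$ through $x = x(z)$, into the definition \eqref{Iooooo}. This $f$ is a pullback from $\check{\mathcal{S}}$, hence invariant, $f(1/z) = f(z)$, and its only poles sit over $x = x_0$, which stays off $\mathrm{i}\mathbb{R}$ precisely because $x_0 \in \mathbb{C}\setminus\mathrm{i}\mathbb{R}$. First I would record the companion $\mathcal{I}^1$-values, which supply the counterterms entering $\check{\mathcal{R}}_b$. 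Since $f$ is invariant, the residue of $x^{b+1}f$ at $x = \infty$ is shared equally between the two preimages $z = 0$ and $z = \infty$, so $\mathcal{I}_b^1[f] = -\Res_{z=\infty}\tfrac{x^{b+1}}{(b+1)!}f = \tfrac12\bigl(-\Res_{x=\infty}\tfrac{x^{b+1}\,\dd x}{(b+1)!\,(x-x_0)^2}\bigr) = \tfrac{x_0^b}{2\,b!}$; hence $f_a = \tfrac12(a+1)x_0^a$ and $\check{\mathcal{R}}_b[f](x) = \bigl(\tfrac{1}{(x-x_0)^2} - \sum_{a=0}^{b-1}\tfrac{(a+1)x_0^a}{2\,x^{a+2}}\bigr)\dd x$.

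\textbf{The base case and the recursion.}
For $b = 0$ there is no counterterm and no $\ln\epsilon$ term, the two integrals $\int_0^{\pm\mathrm{i}\infty}\dd x/(x-x_0)^2 = -1/x_0$ converge, and \eqref{Iooooo} returns $\mathcal{I}_0^0[f] = 2/x_0$ immediately. The point of treating $b > 0$ separately is that, because $f$ is even rather than odd, only half of its leading term is cancelled by the counterterms, so $\check{\mathcal{R}}_b[f] = O(x^{-2}\dd x)$ at infinity instead of $O(x^{-(b+1)}\dd x)$; the integrals in \eqref{Iooooo} then diverge at infinity and one must work with the extended operator, characterised by integration by parts (Lemma~\ref{IPP}) and linearity. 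I would exploit these two properties: writing $f = \dd\bigl((-G)/\dd x\bigr)$ with $G := \dd x/(x-x_0)$ gives $\mathcal{I}_{b+1}^0[f] = \mathcal{I}_b^0[G]$ by the extended Lemma~\ref{IPP}, while $\partial_{x_0}G = f$ together with differentiation under $\mathcal{I}^0$ gives $\partial_{x_0}\mathcal{I}_b^0[G] = \mathcal{I}_b^0[f]$; combining them yields the recursion $\partial_{x_0}\mathcal{I}_{b+1}^0[f] = \mathcal{I}_b^0[f]$ for all $b \geq 0$.

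\textbf{Shape of the answer and reduction to one constant.}
A scaling argument then fixes the structure: the regularised integral is covariant under $x\mapsto\lambda x$, $x_0\mapsto\lambda x_0$ up to the anomaly produced by the cutoff in the $2\mathcal{I}_{b-1}^1[f]\ln\epsilon$ term, and since $\mathcal{I}_{b-1}^1[f] = x_0^{b-1}/2(b-1)!$ this forces the non-logarithmic part of $\mathcal{I}_b^0[f]$ to be a rational function that is homogeneous of degree $b-1$ in $x_0$, i.e. a single monomial $c_b x_0^{b-1}$, with logarithmic part exactly $\tfrac{x_0^{b-1}}{(b-1)!}\ln(x_0^2)$. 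Substituting $\mathcal{I}_b^0[f] = \tfrac{x_0^{b-1}}{(b-1)!}\ln(x_0^2) + c_b x_0^{b-1}$ into the recursion turns it into $c_{b+1} = (c_b - 2/b!)/b$ for $b \geq 1$; the constant $c_1$ is the only coefficient the recursion leaves free, since $c_1 x_0^0$ is annihilated by $\partial_{x_0}$. One checks that $c_b = \tfrac{2(1-bH_b)}{b!}$ solves this recursion with $c_1 = 0$, the induction step collapsing to the harmonic identity $(b+1)(H_{b+1}-H_b) = 1$; this is the claimed formula.

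\textbf{The main obstacle.}
The only genuinely hard input is the single direct evaluation pinning $c_1 = 0$, equivalently $\mathcal{I}_1^0[f] = \ln(x_0^2)$ with no additive constant. Here one cannot merely read off a naive finite part of \eqref{Iooooo}: the value must be the one dictated by the extended operator. I would compute it through the $Z$-substitution and the explicit antiderivative $\ln(x-x_0) - x_0/(x-x_0)$ of $x/(x-x_0)^2$, using the $\ln\epsilon$ term to absorb the endpoint divergence at $x = 0$ and adding the rays to $+\mathrm{i}\infty$ and $-\mathrm{i}\infty$. The delicate step is tracking the branch of the logarithm across the two rays and the $\pm\mathrm{i}\epsilon$ shifts so that the divergent logarithms and the spurious imaginary phases cancel and leave precisely the real, constant-free value $\ln(x_0^2)$; this branch bookkeeping, rather than any of the algebra above, is where care is essential, and it is exactly the analysis already packaged in Lemma~\ref{LEPTH} and the Hilbert-transform estimate of Lemma~\ref{HilbertA}.
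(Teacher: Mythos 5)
Your skeleton --- the differential recursion $\partial_{x_0}\mathcal{I}^0_{b+1}[f_{x_0}] = \mathcal{I}^0_{b}[f_{x_0}]$, a homogeneity argument fixing the shape of the answer, and a single base-case evaluation --- is genuinely different from the paper's proof, which evaluates the regularised integral head-on for every $b$ (partial fractions, binomial resummation, and the identity $\sum_{c=1}^{b}\frac{(-1)^c}{(b-c)!\,c!\,c}=-\frac{H_b}{b!}$); and your recursion algebra, including the harmonic identity $(b+1)(H_{b+1}-H_b)=1$, does check out. But the proposal fails at three concrete points. First and most damaging, $\mathcal{I}^1_b[f]$ is not $\frac{x_0^b}{2\,b!}$: residues are preserved, not halved, by the local biholomorphism $x$ near each of $z=0$ and $z=\infty$, so each preimage of $x=\infty$ carries the \emph{full} residue and $\mathcal{I}^1_b[f]=-\Res_{z=\infty}\frac{x^{b+1}}{(b+1)!}\,\frac{\dd x}{(x-x_0)^2}=\frac{x_0^b}{b!}$, which is the value the paper uses ($\mathcal{I}^1_{b-1}[f_{x_0}]=\frac{x_0^{b-1}}{(b-1)!}$). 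With the correct counterterms $(a+1)x_0^a$ one gets $\check{\mathcal{R}}_b[f_{x_0}]=O\big(x^{-(b+2)}\big)\dd x$ at infinity, because an invariant form has the \emph{same} expansion on both sheets; hence the integrals in \eqref{Iooooo} converge at infinity, and the only divergence is the logarithmic one at $x=0$, which the $\ln\epsilon$ term absorbs. Your claim that the integrals "diverge at infinity", forcing a retreat to the extended operator, is an artefact of the halved counterterms. Worse, the halved coefficient makes your homogeneity argument contradict your own ansatz: the scaling anomaly it produces is $\lambda^{b-1}\frac{x_0^{b-1}}{(b-1)!}\ln\lambda$, which forces the logarithmic part to be $\frac{x_0^{b-1}}{2(b-1)!}\ln\big(x_0^2\big)$ --- half of what you assert and half of the true answer; only with $\mathcal{I}^1_{b-1}[f]=\frac{x_0^{b-1}}{(b-1)!}$ does the anomaly match $\frac{x_0^{b-1}}{(b-1)!}\ln\big(x_0^2\big)$.

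Second, the derivation of the recursion is circular as written: you apply the "extended" Lemma~\ref{IPP} to $G=\frac{\dd x}{x-x_0}$, which has simple poles at $z=0,\infty$, so $\mathcal{I}^0_b[G]$ exists only through that abstract extension --- and then "differentiation under $\mathcal{I}^0$" applied to this abstractly defined quantity \emph{is} the identity you are trying to prove, since the extension is not given by any integral one could differentiate under. The fix stays inside the honest definition: $\partial_{x_0}f_{x_0}=\frac{2\,\dd x}{(x-x_0)^3}=\dd(-f_{x_0}/\dd x)$ has poles only above $x_0$, differentiation under the convergent integral \eqref{Iooooo} is legitimate, and the genuine Lemma~\ref{IPP} then yields $\partial_{x_0}\mathcal{I}^0_{b+1}[f_{x_0}]=\mathcal{I}^0_b[f_{x_0}]$. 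Third, the entire proof hangs on the base value $c_1=0$, i.e.\ $\mathcal{I}^0_1[f_{x_0}]=\ln\big(x_0^2\big)$, and this you do not carry out: you defer it to "branch bookkeeping" and to Corollary~\ref{LEPTH}, but that corollary is proved for forms \emph{odd} under $z\mapsto 1/z$, whereas $f_{x_0}$ is even, so it does not apply; you would have to do the $b=1$ regularised integral by hand, which is exactly (the $b=1$ case of) the computation the paper performs for all $b$ at once. In short: an attractive alternative route, but with the factor of $2$, the circular recursion step, and the missing base case, it is not yet a proof.
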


\begin{proof}
We let $f_{x_0} = \check{B}(\cdot,x_0)/\dd x_0$. The case $b = 0$ does not need regularisation and is easy to compute. For $b > 0$, we have
\begin{gather*}
\mathcal{I}_{b - 1}^1[f_{x_0}] = \frac{x_0^{b - 1}}{(b - 1)!},\qquad \check{\mathcal{R}}_{b}[f_{x_0}](x) = \frac{1}{(x - x_0)^2} - \sum_{a = 0}^{b - 1} \frac{(a + 1)x_0^{a}}{x^{a + 2}}.
\end{gather*}
Therefore
\begin{gather}
\frac{(x - {\rm i}\epsilon)^{b}}{b!}\,\frac{\check{\mathcal{R}}_{b}[f_{x_0}](x)}{\dd x}
 = \frac{(x - {\rm i}\epsilon)^b}{(x - x_0)^2} - \sum_{a = 0}^{b - 1} \sum_{c = 0}^{b} \frac{(a + 1) x^{c - (a + 2)} (-{\rm i}\epsilon)^{b - c}x_0^{a}}{c!(b - c)!} \nonumber\\
\quad{} = \frac{(x_0 - {\rm i}\epsilon)^{b - 1}}{(b - 1)!} \frac{1}{x - x_0} + \frac{(x_0 - {\rm i}\epsilon)^{b}}{b!} \frac{1}{(x - x_0)^2} - \sum_{a = 0}^{b - 1} \sum_{c = -(a + 2)}^{b - (a + 2)} \frac{(a + 1)x^{c}(-{\rm i}\epsilon)^{b - (c + a + 2)}x_0^{a}}{(c + a + 2)!(b - c - a - 2)!} \nonumber\\
\quad\quad{}+ \sum_{a_1 + a_2 + a_3 + a_4 = b - 2} \frac{(a_1 + a_2 + 1)!}{a_1!a_2!} \frac{(a_3 + a_4)!}{a_3!a_4!} x_0^{a_1} (-{\rm i}\epsilon)^{a_2 + a_3} x^{a_4}. \label{RDED}
\end{gather}
Notice that
\begin{gather*}
\sum_{a = 0}^{b - 1} \sum_{c = -(a + 2)}^{b - (a + 2)} = \sum_{a = 0}^{b - 1} \sum_{\tilde{c} = 1}^{a + 2} + \sum_{a = 0}^{b - 2} \sum_{c = 0}^{b - (a + 2)} = \sum_{\tilde{c} = 2}^{b + 1} \sum_{a = \tilde{c} - 2}^{b - 1} + \sum_{a = 0}^{b - 1} \delta_{\tilde{c},1} + \sum_{c = 0}^{b - 2} \sum_{a = 0}^{b - (c + 2)},
\end{gather*}
where $\tilde{c} = -c$. The sum of the $\tilde{c} = 1$ terms is computed by a binomial formula. Since from the beginning the left-hand side of \eqref{RDED} quantity should be $O\big(1/x^2\big)$ when $x \rightarrow \infty$, we must have cancellations of the polynomial part in $x$. After index relabelings we find
\begin{gather*}
 \frac{(x - {\rm i}\epsilon)^{b}}{b!} \frac{\check{\mathcal{R}}_{b}[f_{x_0}](x)}{\dd x}
 = \frac{(x_0 - {\rm i}\epsilon)^{b - 1}}{(b - 1)!}\left(\frac{1}{x - x_0} - \frac{1}{x}\right) \\
 \qquad{} + \frac{(x_0 - {\rm i}\epsilon)^{b}}{b!} \frac{1}{(x - x_0)^2} - \sum_{c = 1}^{b} \sum_{a = 0}^{b - c} \frac{(a + c)(-{\rm i}\epsilon)^{b - a}x_0^{a + c - 1}}{a!(b - a)!} \frac{1}{x^{c + 1}}.
\end{gather*}
We deduce when $\epsilon \rightarrow 0$
\begin{gather*}
 2\operatorname{Re}\bigg(\int_{{\rm i}\epsilon}^{{\rm i}\infty} \dd x\,\frac{(x - {\rm i}\epsilon)^{b}}{b!}\,\check{\mathcal{R}}_{b}[f_{x_0}](x)\bigg)
 = 2\operatorname{Re} \bigg( \frac{(x_0 - {\rm i}\epsilon)^{b - 1}}{(b - 1)!}\ln\left(\frac{{\rm i}\epsilon}{{\rm i}\epsilon - x_0}\right) \\
 \qquad\quad{} + \frac{(x_0 - {\rm i}\epsilon)^{b}}{b!}\frac{1}{{\rm i}\epsilon - x_0} - \sum_{c = 1}^{b} \sum_{a = 0}^{b - c} \frac{(a + c)(-1)^{b - a}({\rm i}\epsilon)^{b - a - c}x_0^{a + c - 1}}{a!(b - a)! c}\bigg) \\
\qquad{} = \frac{2x_0^{b - 1}}{(b - 1)!}\ln(\epsilon) - \frac{x_0^{b - 1}}{(b - 1)!} \ln\big(x_0^2\big) - \frac{2x_0^{b - 1}}{b!} - 2\left(\sum_{c = 1}^b \frac{(-1)^{c}}{(b - c)!c!\,c}\right) bx_0^{b - 1} + o(1).
\end{gather*}
Also
\begin{align*}
\sum_{c = 1}^b \frac{(-1)^{c}}{(b - c)!c!\,c} & = \frac{1}{b!} \int_{0}^{1} \frac{(1 - x)^{b} - 1}{x}\,\dd x
 = \frac{1}{b!}\lim_{\epsilon \rightarrow 0^+} \left(\int_{0}^{1} (1 - x)^{b}x^{-1 + \epsilon}\,\dd x - \frac{1}{\epsilon}\right) \\
& = \lim_{\epsilon \rightarrow 0^+} \left(\frac{\Gamma(\epsilon)}{\Gamma(\epsilon + b + 1)} - \frac{1}{b! \epsilon}\right)
 = -\frac{H_{b}}{b!},
\end{align*}
where we have used that $\Gamma(\epsilon) = 1/\epsilon - \gamma_{E} + o(1)$ when $\epsilon \rightarrow 0$. We arrive at
\begin{align*}
\mathcal{I}_{b}^0[f_{x_0}] &= \frac{x_0^{b - 1}}{(b - 1)!}\,\ln\big(x_0^2\big) + \frac{2x_0^{b - 1}}{b!} - \frac{2x_0^{b - 1}\,H_b}{(b - 1)!} \\
& = \frac{x_0^{b - 1}}{(b - 1)!}\,\ln\big(x_0^2\big) + \frac{2x_0^{b - 1}}{b!}\,(1 - bH_{b}).\tag*{\qed}
\end{align*}\renewcommand{\qed}{}
\end{proof}

\subsection*{Acknowledgements}

This work was initiated during a visit of G.B.~at the University of Melbourne supported by P.~Zinn-Justin, which he thanks for hospitality. G.B.~also thanks Hiroshi Iritani for discussions on mirror symmetry, and acknowledges the support of the Max-Planck-Gesellschaft. Part of this work was carried out during a visit of P.N.~to Ludwig-Maximilians-Universit\"at which he thanks for its hospitality. P.N.~is supported by the Australian Research Council grants DP170102028 and DP180103891.

\pdfbookmark[1]{References}{ref}
\LastPageEnding

\end{document}